\newcommand{\0}{{\mathcal O}}
\newcommand{\D}{{\mathcal D}}
\newcommand{\R}[0]{\mathbb R}
\newcommand{\Ds}[0]{\mathcal D}
\newtheorem{Th}{Theorem}[section]
\newtheorem{Lemma}{Lemma}[section]
\newtheorem{Prop}[Lemma]{Proposition}
\newtheorem{Coro}[Th]{Corollary}
\begin{document}

\title{The group of Symplectomorphisms of $\R^{2n}$ and the Euler equations}
\author{ Hasan \.{I}nci\\
Department of Mathematics, Ko\c{c} University\\
Rumelifeneri Yolu, 34450 Sar{\i}yer \.{I}stanbul T\"urkiye\\
        {\it email: } {hinci@ku.edu.tr}
}

\maketitle

\begin{abstract}
	In this paper we consider the ``symplectic'' version of the Euler equations studied by Ebin \cite{ebin}. We show that these equations are globally well-posed on the Sobolev space $H^s(\R^{2n})$ for $n \geq 1$ and $s > 2n/2+1$. The mechanism underlying global well-posedness has similarities to the case of the 2D Euler equations. Moreover we consider the group of symplectomorphisms $\Ds^s_\omega(\R^{2n})$ of Sobolev type $H^s$ preserving the symplectic form $\omega=dx_1 \wedge dx_2 + \ldots + dx_{2n-1} \wedge dx_{2n}$. We show that $\Ds^s_\omega(\R^{2n})$ is a closed analytic submanifold of the full group $\Ds^s(\R^{2n})$ of diffeomorphisms of Sobolev type $H^s$ preserving the orientation. We prove that the symplectic version of the Euler equations has a Lagrangian formulation on $\D^s_\omega(\R^{2n})$ as an analytic second order ODE in the manner of the Euler-Arnold formalism \cite{arnold}. In contrast to this ``smooth'' behaviour in Lagrangian coordinates we show that it has a very ``rough'' behaviour in Eulerian coordinates. To be precise we show that the time $T > 0$ solution map $u_0 \mapsto u(T)$ mapping the initial value of the solution to its time $T$ value is nowhere locally uniformly continuous. In particular the solution map is nowhere locally Lipschitz. 
\end{abstract}

{\bf Keywords: Groups of Symplectomorphisms, Euler equations, Global well-posedness}

\newpage

\section{Introduction}\label{section_introduction}

The initial value problem for the incompressible Euler equations on $\R^m,m \geq 2$, can be written as
\begin{equation}\label{euler}
	u_t+(u \cdot \nabla) u \in H_\mu^\perp,\; u \in H_\mu,\; u(0)=u_0 \in H_\mu,
\end{equation}
where $u=(u_1,\ldots,u_m):\R \times \R^m \to \R^m$ is the unknown time dependent velocity field of the fluid, $H_\mu$ is the space of divergence free vector fields on $\R^m$ and $H_\mu^\perp$ is its $L^2$ orthogonal complement. Here $\mu=dx_1 \wedge \ldots \wedge dx_m$ denotes the volume form on $\R^m$ and $H_\mu$ can be described as the space of vector fields $X$ on $\R^m$ s.t. $L_X \mu=0$, where $L_X$ is the Lie derivative. Usually one writes the first equation of \eqref{euler} in the form
\[
	u_t + (u \cdot \nabla) u = -\nabla p
\]
for an unknown function $p$. Note that the vector fields which are $L^2$ orthogonal to the space of divergence free vector fields are of the form $-\nabla p$. This follows from the Hodge decomposition (or in this special situation from the Helmholtz decomposition which precedes Hodge). In \cite{ebin} Ebin studied a ``symplectic'' modification of \eqref{euler} on a closed manifold $M^{2n}$. In $\R^{2n},n \geq 1$, this modification of \eqref{euler} reads as 
\begin{equation}\label{symplectic_euler}
	u_t+(u \cdot \nabla) u \in H_\omega^\perp,\; u \in H_\omega,\;u(0)=u_0 \in H_\omega,
\end{equation}
where $u=(u_1,\ldots,u_{2n})$ is the unknown time dependent velocity field, $H_\omega$ the space of symplectic vector fields on $\R^{2n}$ with respect to the symplectic form $\omega=dx_1 \wedge dx_2 + \ldots + dx_{2n-1} \wedge dx_{2n}$ and $H_\omega^\perp$ is its $L^2$ orthogonal complement. More precisely $H_\omega$ is the space of vector fields $X$ on $\R^{2n}$ s.t. $L_X \omega=0$. Note that for $\R^2$ the systems \eqref{euler} and \eqref{symplectic_euler} coincide.\\
We will prove

\begin{Th}\label{th_gwp}
	Let $n \geq 1$ and $s > 2n/2+1$. Then \eqref{symplectic_euler} is globally well-posed in the Sobolev space $H^s(\R^{2n})$.
\end{Th}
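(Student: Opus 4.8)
The plan is to follow the Euler--Arnold and Ebin--Marsden strategy. First I would obtain local well-posedness from the Lagrangian formulation of \eqref{symplectic_euler} as an analytic second order ODE $\ddot\varphi = S(\varphi,\dot\varphi)$ on the tangent bundle of $\Ds^s_\omega(\R^{2n})$, the geodesic spray $S$ being analytic because in Lagrangian coordinates the nonlinearity and the Hodge-type projection onto $H_\omega$ lose no derivatives. The Cauchy--Lipschitz theorem on the Banach manifold $T\Ds^s_\omega(\R^{2n})$ then gives, for each $u_0 \in H_\omega \cap H^s(\R^{2n})$, a unique maximal solution on some interval $[0,T_\ast)$, with $u(t) = \dot\varphi(t)\circ\varphi(t)^{-1}$, together with persistence of regularity, continuous dependence on $u_0$, and the continuation criterion that the solution is global as soon as $\sup_{[0,T]}\hnorm{u(t)}{H^s} < \infty$ for every finite $T$. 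It therefore remains to bound $\hnorm{u(t)}{H^s}$ a priori on every finite interval.

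The mechanism --- and the crux of the proof --- is that the symplectic constraint singles out one transported scalar. On $\R^{2n}$ a vector field is symplectic if and only if it is Hamiltonian, so along the solution $u(t) = X_{H(t)}$ for a time-dependent Hamiltonian $H(t)$ (unique up to a constant), with $u^{2j-1} = \p_{x_{2j}}H$ and $u^{2j} = -\p_{x_{2j-1}}H$. Writing \eqref{symplectic_euler} as $\p_t u^\flat + \iota_u\, du^\flat = -d\!\left(p + \tfrac12|u|^2\right)$ and taking the exterior derivative yields the vorticity equation $\p_t\Omega + L_u\Omega = 0$ for the closed $2$-form $\Omega := du^\flat$. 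Applying the symplectic trace $\Lambda = \iota_\pi$, contraction with the Poisson bivector $\pi = \sum_{j}\p_{x_{2j-1}} \wedge \p_{x_{2j}}$, and using that $u$ symplectic forces $L_u\omega = 0$ hence $L_u\pi = 0$, so that $\Lambda$ commutes with $L_u$ on $2$-forms, one finds that $\varpi := \Lambda\Omega = -\Delta H$ satisfies the pure transport equation
\[
	\p_t\varpi + u\cdot\nabla\varpi = 0 .
\]
Since symplectomorphisms preserve $\omega^n$, a constant multiple of the Euclidean volume form, the flow of $u$ is volume preserving, whence $\|\varpi(t)\|_{L^p} = \|\varpi_0\|_{L^p}$ for every $p \in [1,\infty]$; and $u_0 \in H^s$ with $s > 2n/2 + 1$ gives $\varpi_0 = \Lambda\, du_0^\flat \in H^{s-1} \hookrightarrow L^2 \cap L^\infty$, so $\lnorm{\varpi(t)}$ and $\snorm{\varpi(t)}$ are finite and independent of $t$.

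The rest is the standard two-dimensional machinery transplanted to $\R^{2n}$. From $\varpi = -\Delta H$ we recover $H = -\Delta^{-1}\varpi$ and hence $u = -J\nabla H$, so that $\nabla u$ is a matrix of order-zero Calder\'on--Zygmund operators applied to $\varpi$; this yields the logarithmic interpolation inequality $\snorm{\nabla u(t)} \le C\big(\lnorm{\varpi(t)} + \snorm{\varpi(t)}\big)\big(1 + \log(e + \hnorm{u(t)}{H^s})\big)$, valid for $s > 2n/2+1$. Combined with the usual commutator/Moser energy estimate $\tfrac{d}{dt}\hnorm{u}{H^s} \le C\,\snorm{\nabla u}\,\hnorm{u}{H^s}$ for \eqref{symplectic_euler} and the conservation of $\lnorm{\varpi},\snorm{\varpi}$ established above, this gives $\tfrac{d}{dt}\hnorm{u}{H^s} \le C_0\,\hnorm{u}{H^s}\log\big(e + \hnorm{u}{H^s}\big)$ with $C_0 = C_0\big(n,s,\lnorm{\varpi_0},\snorm{\varpi_0}\big)$, and Gr\"onwall's inequality (Osgood's lemma) then bounds $\hnorm{u(t)}{H^s}$ on every finite interval by a double exponential in $t$. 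By the continuation criterion this forces $T_\ast = \infty$, and together with uniqueness and continuous dependence we obtain global well-posedness.

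I expect the only nonroutine point to be the algebraic miracle in the second paragraph: that the symplectic structure decouples exactly one scalar combination $\varpi = \Lambda(du^\flat) = -\Delta H$ of the vorticity $2$-form and that this scalar is transported by the flow. In dimension $2n \ge 4$ the full $2$-form $\Omega$ is transported only with stretching --- its $L^\infty$ norm is not a priori conserved --- so without the symplectic constraint one would face precisely the open difficulty of the $3$D Euler equations; the identity $[\Lambda, L_u] = 0$, equivalently $L_u\pi = 0$, is what reduces the problem to the benign two-dimensional situation. Verifying that this single conserved scalar, through the Biot--Savart recovery $u = -J\nabla\Delta^{-1}\varpi$ and the logarithmic estimate, suffices to close the global a priori bound is then standard.
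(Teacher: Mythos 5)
Your overall strategy is the one the paper follows: local well-posedness from the analytic Lagrangian ODE, the Beale--Kato--Majda continuation criterion via a Kato--Ponce energy estimate, the identification of a single transported scalar (your $\varpi=\Lambda(du^\flat)$ is, up to sign, the paper's symplectic divergence $\nabla_\omega\cdot u$), the Biot--Savart recovery $u=-(-\Delta)^{-1}\nabla_\omega(\nabla_\omega\cdot u)$, the logarithmic interpolation inequality for degree-zero multipliers, and a Gr\"onwall/Osgood closure. So the architecture is correct and matches the paper.

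There is, however, one genuine error in your derivation of the transport equation, precisely at the step you call the crux. You write the equation as $\partial_t u^\flat+\iota_u\,du^\flat=-d\bigl(p+\tfrac12|u|^2\bigr)$ and take $d$ to get $\partial_t\Omega+L_u\Omega=0$. This presupposes that the constraint force $Z=u_t+(u\cdot\nabla)u\in H_\omega^\perp$ is an exact (Euclidean) one-form. That is true for $n=1$, where the problem reduces to 2D Euler, but false for $n\geq 2$: since $H_\omega\subsetneq H_\mu$, the orthogonal complement $H_\omega^\perp$ strictly contains the gradients; concretely it is the image of $P^\ast$, i.e.\ vector fields $-2\operatorname{div}(Y)\cdot\omega$ with $Y$ skew, and these are in general not closed one-forms (already in $\R^4$, taking $Y$ with only $Y_{13}=-Y_{31}=y$ gives $Z=(0,2\partial_3y,0,-2\partial_1y)$, whose exterior derivative does not vanish). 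Hence the full vorticity $2$-form equation $\partial_t\Omega+L_u\Omega=0$ fails for $n\geq 2$; the correct equation carries the source $-dZ^\flat\neq0$. Your scalar conclusion $\partial_t\varpi+u\cdot\nabla\varpi=0$ is nevertheless true, but for a reason you do not supply: one must also show that $\Lambda(dZ^\flat)=-\nabla_\omega\cdot Z=0$ for every $Z\in H_\omega^\perp$. This is the paper's observation that every symplectic gradient $\nabla_\omega H$ lies in $H_\omega$ (indeed $P(\nabla_\omega H)=0$), so $\langle Z,\nabla_\omega H\rangle_{L^2}=-\langle\nabla_\omega\cdot Z,H\rangle_{L^2}=0$ for all test Hamiltonians $H$, forcing $\nabla_\omega\cdot Z=0$. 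So in addition to the ``miracle'' $[\Lambda,L_u]=0$ that you correctly identify, a second cancellation --- $\Lambda$ annihilating the exterior derivative of the constraint force --- is needed and must be proved; once that is patched, the rest of your argument goes through as in the paper.
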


In \cite{ebin} Ebin showed the analog of Theorem \ref{th_gwp} for a closed manifold $M^{2n}$.\\
To consider \eqref{symplectic_euler} on $L^2$ based spaces is quite natural since the $L^2$ norm of a solution $u$ is conserved. One can see this by taking the $L^2$ inner product of the equation with $u$ and integrating. Recall that the Sobolev space on $\R^m,m \geq 1$, of class $s \geq 0$ can be defined as
\[
	H^s(\R^m)=\{f \in L^2(\R^m) \;|\; \|f\|_{H^s} < \infty \},
\]
where $\|f\|_{H^s}=\left(\int_{\R^m} (1+|\xi|^2)^s |\hat f(\xi)|^2 \;d\xi \right)^{1/2}$ and $\hat f$ is the Fourier Transform of $f$. For $s > m/2+k$ and $k \geq 0$ the Sobolev Imbedding Theorem gives $H^s(\R^m) \hookrightarrow C^k_0(\R^m)$. Here $C^k_0(\R^m)$ is the space of $C^k$ functions on $\R^m$ vanishing at infinity together with their derivatives up to order $k$. Moreover for $s > m/2$ and $0 \leq s' \leq s$ the multiplication
\[
	H^s(\R^m) \times H^{s'}(\R^m) \to H^{s'}(\R^m),\;(f,g) \mapsto f \cdot g
\]
is a bounded map. When we speak about a solution $u$ to \eqref{symplectic_euler} in $H^s(\R^{2n})$ for $s > 2n/2+1$ on $[0,T],T \geq 0$, then we mean  a $u \in C([0,T];H^s_\omega(\R^{2n};\R^{2n}))$ s.t. there is $Z \in C([0,T];H^{s-1}_\omega(\R^{2n};\R^{2n})^\perp)$ with
\[
	u(t)=\int_0^t Z(s)-(u(s) \cdot \nabla)u(s) \;ds,\;0 \leq t \leq T.
\]
Note that $(u \cdot \nabla)u \in H^{s-1}$. So it is natural to assume this regularity for $Z$. For basic concepts in Sobolev spaces see \cite{composition}.\\
Our goal is to write \eqref{symplectic_euler} in Lagrangian coordinates, i.e. in terms of the flow map $\varphi$ of $u$
\[
	\varphi_t(t,x)=u(t,\varphi(t,x)),\;\varphi(0,x)=x,\;t \geq 0,x \in \R^{2n}.
\]
The convenient functional space for Lagrangian coordinates $\varphi:\R^m \to \R^m$ of Sobolev type in the classical regime, i.e. when $\varphi$ is $C^1$, was studied in \cite{composition}. It reads for $m \geq 1$ and $s > m/2+1$ as
\[
	\Ds^s(\R^m)=\{\varphi:\R^m \to \R^m \;|\; \varphi-\text{id} \in H^s(\R^m;\R^m),\; \det(d_x \varphi) > 0, x \in \R^m \},
\]
where $\text{id}:\R^m \to \R^m$ is the identity map, $H^s(\R^m;\R^m)$ is the space of vector fields on $\R^m$ of Sobolev class $s$ and $d_x \varphi$ is the Jacobian matrix of $\varphi$ at $x \in \R^m$. Due to the Sobolev Imbedding $H^s(\R^m) \hookrightarrow C_0^1(\R^m)$ one sees that $\Ds^s(\R^m)$ consists of $C^1$ diffeomorphisms of $\R^m$ and that $\Ds^s(\R^m) - \text{id} \subset H^s(\R^m;\R^m)$ is an open set. So $\Ds^s(\R^m)$ has naturally a Hilbert manifold structure. In \cite{composition} it was shown that $\D^s(\R^m)$ is a topological group under composition of maps.\\
We introduce for $n \geq 1$, $s > 2n/2+1$ and the symplectic structure $\omega=dx_1 \wedge dx_2 + \ldots + dx_{2n-1} \wedge dx_{2n}$ on $\R^{2n}$ the subgroup $\Ds^s_\omega(\R^{2n}) \subset \Ds^s(\R^{2n})$ consisting of symplectomorphisms of $(\R^{2n},\omega)$, i.e.
\[
	\Ds^s_\omega(\R^{2n})=\{\varphi \in \Ds^s(\R^{2n}) \;|\; \varphi^\ast \omega=\omega \},
\]
where $\varphi^\ast$ is the pullback by $\varphi$. We will prove

\begin{Th}\label{th_submanifold}
Let $n \geq 1$ and $s > 2n/2+1$. Then $\Ds^s_\omega(\R^{2n})$ is a closed analytic submanifold of $\Ds^s(\R^{2n})$.
\end{Th}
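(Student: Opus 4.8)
The plan is to realize $\Ds^s_\omega(\R^{2n})$ as the zero set of an analytic map between Hilbert manifolds and to apply the implicit function theorem in the analytic category. I would work with
\[
\Phi:\Ds^s(\R^{2n})\longrightarrow \{\text{closed }H^{s-1}\text{ $2$--forms on }\R^{2n}\},\qquad \Phi(\varphi):=\varphi^\ast\omega-\omega .
\]
Since $\varphi-\id\in H^s$ one has $d\varphi-\id\in H^{s-1}$, and as $\varphi^\ast\omega$ is a quadratic polynomial in the entries of $d\varphi$ while $H^{s-1}(\R^{2n})$ is a Banach algebra (here $s-1>2n/2$), $\Phi$ is well defined and, being polynomial between Hilbert spaces, real-analytic; $\varphi^\ast\omega$ is closed because pullback commutes with $d$. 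Clearly $\Ds^s_\omega(\R^{2n})=\Phi^{-1}(0)$, so closedness of $\Ds^s_\omega(\R^{2n})$ in $\Ds^s(\R^{2n})$ is immediate from continuity of $\Phi$. It then remains to exhibit $\Phi$ as an analytic submersion along $\Phi^{-1}(0)$, with split kernel $T_{\id}\Ds^s_\omega(\R^{2n})$.

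Differentiating along a curve through $\id$ with velocity $X\in H^s(\R^{2n};\R^{2n})$ gives $d\Phi_{\id}(X)=L_X\omega=d(\iota_X\omega)$ by Cartan's formula and $d\omega=0$. Since $\omega$ has constant non-degenerate coefficients, $X\mapsto\iota_X\omega$ is a Hilbert-space isomorphism from $H^s$ vector fields onto $H^s$ $1$--forms, so $\ker d\Phi_{\id}=H^s_\omega:=\{X\in H^s(\R^{2n};\R^{2n}):L_X\omega=0\}$, a closed, hence complemented, subspace. The main obstacle is surjectivity, and it forces a careful choice of target: one cannot take \emph{all} closed $H^{s-1}$ $2$--forms, since on $\R^{2n}$ there are such forms with no $H^s$ primitive (a low-frequency phenomenon). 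Instead I would take as target the Hilbert space $\mathcal R^{s-1}$ of closed $H^{s-1}$ $2$--forms that \emph{do} admit an $H^s$ primitive, normed so that $d$ becomes the canonical isomorphism from ($H^s$ $1$--forms)$/$(closed $H^s$ $1$--forms) onto $\mathcal R^{s-1}$; with this target $d\Phi_{\id}$ is, up to that isomorphism, the quotient projection, hence a split surjection with kernel $H^s_\omega$. The point that must be proved is then that $\varphi^\ast\omega-\omega\in\mathcal R^{s-1}$ — a Sobolev Poincar\'e lemma on $\R^{2n}$ — with a primitive depending analytically on $\varphi$. Writing $\varphi=\id+f$: the part of $\varphi^\ast\omega-\omega$ linear in $f$ has an $H^s$ primitive with norm $\le C\|f\|_{H^s}$ (immediate from its Fourier structure, e.g.\ the Fourier-minimal primitive $\widehat\beta(\xi)=-i|\xi|^{-2}\iota_\xi\widehat\alpha(\xi)$); the quadratic part is a sum of Jacobian-type null forms $df_i\wedge df_j$, and the same minimal primitive gains a power of $|\xi|^{-1}$ and lies in $H^s$ by a low-frequency estimate. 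I expect this last estimate to be the genuinely delicate step: for $2n\ge4$ it is elementary, since $|\xi|^{-2}$ is locally integrable near the origin in dimension $\ge 3$ and the Fourier transform of a product of two $L^2$ functions is bounded; in the borderline dimension $2n=2$ one instead uses that $\det(df)$ lies in the Hardy space $\mathcal H^1(\R^2)$ (Coifman--Lions--Meyer--Semmes), which supplies exactly the missing bound $\||\xi|^{-1}\widehat{\det(df)}\,\|_{L^2}<\infty$ (alternatively one may appeal to the classically known submanifold structure of the group of area-preserving diffeomorphisms of $\R^2$). As these bounds are bilinear in $f$, the assignment $\varphi\mapsto\beta$ is analytic, so $\Phi:\Ds^s(\R^{2n})\to\mathcal R^{s-1}$ is an analytic submersion at $\id$.

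Finally I would propagate the submersion property to an arbitrary $\varphi_0\in\Ds^s_\omega(\R^{2n})$ by right translation. The map $R_{\varphi_0}:\psi\mapsto\psi\circ\varphi_0$ is an analytic diffeomorphism of $\Ds^s(\R^{2n})$: in the chart $\Ds^s(\R^{2n})-\id\subset H^s$ it is affine, $\psi\circ\varphi_0=\varphi_0+(\psi-\id)\circ\varphi_0$, and $g\mapsto g\circ\varphi_0$ is bounded and linear on $H^s$ by the composition estimates of \cite{composition}. Since $\varphi_0\in\Ds^s_\omega(\R^{2n})$, one has $\Phi\circ R_{\varphi_0}=\varphi_0^\ast\circ\Phi$, with $\varphi_0^\ast$ a bounded linear automorphism of the target — on $H^{s-1}$ $2$--forms it is bounded because the entries of $d\varphi_0$ lie in the algebra $H^{s-1}$, and it commutes with $d$, hence preserves primitivity. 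Differentiating at $\id$ gives $d\Phi_{\varphi_0}\circ d(R_{\varphi_0})_{\id}=\varphi_0^\ast\circ d\Phi_{\id}$ with invertible flanking maps, so $d\Phi_{\varphi_0}$ is again a split surjection. The analytic implicit function theorem between Hilbert spaces then yields that $\Phi^{-1}(0)=\Ds^s_\omega(\R^{2n})$ is an analytic submanifold of $\Ds^s(\R^{2n})$ with $T_{\id}\Ds^s_\omega(\R^{2n})=H^s_\omega$; together with the closedness noted above, this proves the theorem. Apart from the Sobolev Poincar\'e lemma and its low-frequency subtlety in dimension two, everything reduces to the multiplication and composition estimates recalled in the introduction.
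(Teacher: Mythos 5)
Your route is genuinely different from the paper's. The paper does not use the defining equation $\varphi^\ast\omega=\omega$ as a submersion at all: it builds the chart from the exponential map of the geodesic spray $\varphi_{tt}=F(\varphi,\varphi_t)$ (the Lagrangian form of the symplectic Euler equation), shows $\exp$ is a local diffeomorphism at $0$ by the Inverse Function Theorem, proves $\exp(B\cap H^s_\omega)\subset\Ds^s_\omega$ by the conservation law $\tfrac{d}{dt}\varphi^\ast\omega=\varphi^\ast L_u\omega=0$, and — this is where its real work lies — proves the reverse inclusion $\exp(B_\delta(0)\smallsetminus H^s_\omega)\subset\Ds^s\smallsetminus\Ds^s_\omega$ via a quantitative lower bound $\|d\varphi(1)^\top\omega\, d\varphi(1)-\omega\|_{L^2}\ge\tfrac12\|P(u_0)\|_{L^2}$, obtained from a commutator estimate for $[u\cdot\nabla,\mathcal R_j]$ and Gr\"onwall. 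Your Ebin--Marsden-style submersion argument avoids all of that dynamics, at the price of the functional-analytic work of choosing the target: on $\R^{2n}$ the naive target (all exact $H^{s-1}$ $2$-forms with the $H^{s-1}$ norm) fails because $d(H^s\Lambda^1)$ is not closed there, and your quotient-normed space $\mathcal R^{s-1}$ is the right fix. Each approach buys something: the paper's chart is exactly the one it reuses later (Sections 6--7), while yours identifies $T_{\id}\Ds^s_\omega=H^s_\omega$ directly and is closer to the compact-manifold argument of Ebin--Marsden.

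There is one step where your justification, as written, does not suffice: the claim that $\varphi_0^\ast$ is a bounded automorphism of $\mathcal R^{s-1}$ "because it commutes with $d$, hence preserves primitivity." Pullback of $1$-forms by $\varphi_0\in\Ds^s$ loses a derivative: if $\gamma=d\beta$ with $\beta\in H^s\Lambda^1$, the components of $\varphi_0^\ast\beta$ are $(\beta_k\circ\varphi_0)\,\partial_i(\varphi_0)_k\in H^s\cdot H^{s-1}\subset H^{s-1}$ only, so $\varphi_0^\ast$ does \emph{not} visibly carry $H^s$ primitives to $H^s$ primitives, and boundedness in the quotient norm is not immediate. The statement is nevertheless true, and the repair uses exactly the minimal primitive you already introduced: the Fourier-minimal primitive of $d\beta$ is the Leray projection $\mathbb P\beta=\beta-d\Delta^{-1}\delta\beta$, whose Fourier transform is the pointwise orthogonal projection of $\hat\beta(\xi)$ onto $\xi^\perp$, so $\|\mathbb P\beta\|_{H^s}$ is controlled by $\|\gamma\|_{H^{s-1}}$ at high frequencies and by $\|\beta\|_{L^2}$ at low frequencies, for \emph{any} $L^2$ primitive $\beta$. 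Hence $\|\gamma\|_{\mathcal R^{s-1}}\simeq\|\gamma\|_{H^{s-1}}+\inf\{\|\beta\|_{L^2}: d\beta=\gamma\}$, and since $\varphi_0^\ast$ is bounded on $H^{s-1}$ $2$-forms and on $L^2$ $1$-forms, it is bounded on $\mathcal R^{s-1}$. You should state this equivalence of norms explicitly; it also makes your "genuinely delicate" low-frequency step trivial in every dimension, including $2n=2$: the quadratic part of $\varphi^\ast\omega-\omega$ is a sum of terms $df_i\wedge df_j=d(f_i\,df_j)$ with the explicit $L^2$ primitive $f_i\,df_j$, $\|f_i\,df_j\|_{L^2}\le\|f_i\|_{L^\infty}\|df_j\|_{L^2}\lesssim\|f\|_{H^s}^2$, so no Hardy-space or Coifman--Lions--Meyer--Semmes input is needed. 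With that one point supplied, your proof is complete.
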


The smooth analog of Theorem \ref{th_submanifold} for a compact manifold $M^{2n}$ was shown in \cite{ebin_marsden}. In \cite{lagrangian} it was shown for $m \geq 1$, $s > m/2+1$ and the volume form $\mu=dx_1 \wedge \ldots \wedge dx_m$ on $\R^m$ that the subgroup $\Ds^s_\mu(\R^m) \subset \Ds^s(\R^m)$ consisting of volume preserving diffeomorphisms of $(\R^m,\mu)$, i.e.
\[
	\Ds^s_\mu(\R^m)=\{\varphi \in \Ds^s(\R^m) \;|\; \varphi^\ast \mu=\mu \},
\]
is a closed analytic submanifold of $\Ds^s(\R^m)$.\\
Suppose that $u \in C([0,T];H^s_\omega(\R^{2n};\R^{2n}))$ is a solution to \eqref{symplectic_euler} for some $T > 0$. From \cite{lagrangian} we know that we can integrate $u$ to a unique $\varphi \in C^1([0,T];\Ds^s(\R^{2n}))$ satisfying
\[
	\partial_t \varphi(t)=u(t) \circ \varphi(t),\; 0 \leq t \leq T,\;\varphi(0)=\text{id}.
\]
One has than actually $\varphi \in C^1([0,T];\Ds^s_\omega(\R^{2n}))$ as we will see later on. We will show that one can write the system \eqref{symplectic_euler} in terms of the Lagrangian variable $\varphi$ in the style of Arnold \cite{arnold} who formally worked this out for the Euler equations \eqref{euler}. More precisely

\begin{Th}\label{th_lagrangian_formulation}
	Let $n \geq 1$ and $s > 2n/2+1$. One can write \eqref{symplectic_euler} in Lagrangian coordinates as
\[
	\varphi_{tt}=F(\varphi,\varphi_t),\;\varphi(0)=\text{id},\;\varphi_t(0)=u_0,
\]
where
	\[
		F:\Ds^s(\R^{2n}) \times H^s(\R^{2n};\R^{2n}) \to H^s(\R^{2n};\R^{2n})
	\]
is an analytic map.
\end{Th}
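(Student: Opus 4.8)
The plan is to derive $F$ by pushing the Eulerian equation \eqref{symplectic_euler} through the flow map and then extracting the pressure-type term via a projection whose analyticity we already control. First I would compute $\varphi_{tt}$ directly from $\partial_t \varphi = u \circ \varphi$: differentiating once more in $t$ gives $\varphi_{tt} = (u_t \circ \varphi) + (d u \circ \varphi)(u \circ \varphi) = \big( u_t + (u\cdot\nabla)u\big)\circ\varphi$. By \eqref{symplectic_euler} the bracket equals $Z := u_t+(u\cdot\nabla)u \in H^{s-1}_\omega(\R^{2n};\R^{2n})^\perp$, the $L^2$-orthogonal complement of the symplectic fields. So the whole content of the theorem is to show that $Z\circ\varphi$ can be written as an analytic function of $(\varphi,\varphi_t)\in\Ds^s(\R^{2n})\times H^s(\R^{2n};\R^{2n})$ taking values in $H^s$ — note in particular the apparent loss of one derivative must be recovered, exactly as in the classical Euler--Arnold picture, because $Z\circ\varphi$ is forced to be tangential enough.

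The key step is to characterize $Z$ intrinsically. Since $u(t)\in H^s_\omega$, i.e.\ $L_u\omega=0$, and $\omega$ is closed, $L_u\omega = d(\iota_u\omega)$, so $\iota_u\omega$ is a closed $1$-form; write $\iota_u\omega = d h$ for a Hamiltonian $h$ (on $\R^{2n}$ closed equals exact). Differentiating the constraint $\varphi(t)\in\Ds^s_\omega(\R^{2n})$ — which by Theorem \ref{th_submanifold} is an analytic submanifold — one finds that $\varphi_t\circ\varphi^{-1}=u$ is tangent to $H_\omega$, consistent with the above. The term $Z\circ\varphi$ is the component of $\varphi_{tt}$ normal to the submanifold $\Ds^s_\omega(\R^{2n})$ at the point $\varphi$, transported by $\varphi$: more precisely, pulling back by $\varphi$, the condition $Z\perp H_\omega$ in the $L^2$ inner product at time $t$ becomes a condition on $Z\circ\varphi$ weighted by the (constant, since $\varphi^*\omega=\omega$ and hence $\varphi^*\mu=\mu$) Jacobian — so in fact $Z\circ\varphi \perp (d\varphi)^{-1}(H_\omega\circ\varphi)$ in plain $L^2$. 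This identifies $Z\circ\varphi$ as $(\mathrm{id}-P_\varphi)$ applied to the known analytic quantity $(d u\circ\varphi)(u\circ\varphi)$ together with the constraint-curvature term coming from differentiating $\varphi\mapsto\varphi^*\omega$ twice; here $P_\varphi$ is the $\varphi$-conjugated Leray-type projection onto symplectic fields. Concretely, $F(\varphi,v) = -\,\Gamma_\varphi(v,v)$ where $\Gamma_\varphi$ is the second fundamental form / Christoffel map of $\Ds^s_\omega(\R^{2n})\subset\Ds^s(\R^{2n})$ written in the chart $\varphi\mapsto\varphi-\mathrm{id}$, evaluated on $v=\varphi_t$; the fact that this is well-defined into $H^s$ (no derivative loss) is precisely the statement that $\Ds^s_\omega(\R^{2n})$ carries a smooth Levi-Civita-type connection, which is what \cite{ebin_marsden} and \cite{arnold} establish in the closed-manifold case and \cite{lagrangian} in the volume-preserving $\R^m$ case.

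For the analyticity I would proceed exactly as in \cite{lagrangian}: the map $\varphi\mapsto\varphi^*\omega$ is analytic $\Ds^s(\R^{2n})\to H^{s-1}(\R^{2n};\Lambda^2)$ because it is a composition of the analytic composition/inverse operations on $\Ds^s$ (established in \cite{composition}) with multiplication, which is bounded bilinear in the relevant Sobolev ranges. Its derivative at $\varphi\in\Ds^s_\omega$ is (up to conjugation by $\varphi$) the operator $X\mapsto L_{X}\omega = d\iota_X\omega$ on $H^s$ vector fields, whose kernel is $H^s_\omega$ and whose range is the exact $2$-forms; the key analytic input — already used in proving Theorem \ref{th_submanifold} — is that this operator has a bounded right inverse (solve $d\iota_X\omega = \beta$ by taking $\iota_X\omega$ to be the exact primitive of $\beta$, gaining back a derivative via the Poincaré lemma with Sobolev estimates). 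Hence the implicit-function/submanifold machinery gives an analytic projection $P_\varphi$ depending analytically on $\varphi$, and composing with the analytic "source" term $(du\circ\varphi)(u\circ\varphi)=\big(d(\varphi_t\circ\varphi^{-1})\circ\varphi\big)(\varphi_t) = (d\varphi_t)(d\varphi)^{-1}\varphi_t$ — manifestly analytic in $(\varphi,\varphi_t)$ on $\Ds^s\times H^s$ into $H^{s-1}$ — and checking that the normal component in fact lands in $H^s$, yields the analytic $F$. The main obstacle I anticipate is this last regularity bookkeeping: one must show that although each individual piece ($\varphi_{tt}$ built from $(d\varphi_t)(d\varphi)^{-1}\varphi_t$ and the projection error) only obviously lies in $H^{s-1}$, the specific combination dictated by orthogonality to $H_\omega$ cancels the top-order part and lands in $H^s$. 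This is the same phenomenon that makes the geodesic spray on $\Ds^s_\mu$ smooth despite the Euler equation losing a derivative in Eulerian coordinates, and I would handle it by writing $F$ explicitly through the primitive of a $2$-form (invoking the Poincaré-lemma gain) rather than through a pseudodifferential Leray projector, so that the derivative count is transparent.
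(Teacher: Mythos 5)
Your high-level picture (geodesic spray, cancellation of the derivative loss, analyticity via conjugated operators as in \cite{lagrangian}) is the right one, but there is a genuine gap at the step you yourself flag as the main obstacle, and your proposed fix does not work on $\R^{2n}$. You propose to recover the lost derivative ``through the primitive of a $2$-form (invoking the Poincar\'e-lemma gain)'' resp.\ through a bounded right inverse of $X \mapsto d\imath_X\omega$. Producing that primitive with the required Sobolev gain amounts to applying $\Delta^{-1}$ to the quadratic term, and on $\R^{2n}$ the operator $\Delta^{-1}$ is \emph{not} bounded from $H^{s-2}$ to $H^s$: its symbol $-1/|\xi|^2$ blows up at $\xi=0$. (On a closed manifold, Ebin's setting, this low-frequency problem is absent, which is why the argument you sketch works there.) The paper's proof of this theorem lives or dies on exactly this point, and resolves it with Chemin's trick: it writes $P((u\cdot\nabla)u)$ in two algebraically different ways, $P_H(u)$ as a sum of products of first derivatives put under one outer $\partial_i$ (so that $\Delta^{-1}(1-\chi(D))\partial_i$ applied to the $H^{s-1}$ product lands in $H^s$ at high frequencies), and $P_L(u)$ in divergence form $\partial_i\partial_j(\partial_k u_p\cdot u_q)$ (so that $\partial_i\partial_j\Delta^{-1}\chi(D)$ applied to the $H^{s-1}$ quantity is smoothing at low frequencies). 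The resulting $B:H^s\to H^s$ is a bounded quadratic form and $F(\varphi,v)=B(v\circ\varphi^{-1})\circ\varphi$; analyticity then follows from the conjugation results of \cite{lagrangian} exactly as you anticipate. Without some version of this frequency splitting (or another device controlling $\Delta^{-1}$ near $\xi=0$), your construction of $F$ does not land in $H^s$.

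A second, structural problem: you invoke Theorem \ref{th_submanifold} and the induced projection $P_\varphi$ onto $T_\varphi\Ds^s_\omega(\R^{2n})$ to define $F$ as a second fundamental form. In this paper the submanifold theorem is proved \emph{after} and \emph{by means of} the analytic spray (the exponential map of \eqref{first_order_ode} furnishes the charts in Lemma \ref{exp_chart}), so using it here is circular unless you supply an independent proof, which on $\R^{2n}$ would again require the Hodge-type decomposition of $H^{s-1}(\R^{2n};\R^{2n})$ that the paper deliberately avoids (``We won't need this here''). Relatedly, the paper's $F$ is defined on all of $\Ds^s(\R^{2n})\times H^s(\R^{2n};\R^{2n})$, not merely on $T\Ds^s_\omega(\R^{2n})$; this is what allows the ODE to be solved in the ambient manifold first and the invariance of the constraint to be checked afterwards (Lemma \ref{lemma_new_to_old}). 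Your $F(\varphi,v)=-\Gamma_\varphi(v,v)$ is only defined on the constraint set, which would force you to know the submanifold structure before you can even state the ODE.
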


The smooth analog of Theorem \ref{th_lagrangian_formulation} for a closed manifold $M^{2n}$ was shown in \cite{ebin} by Ebin. In \cite{lagrangian} the analog of Theorem \ref{th_lagrangian_formulation} for $\R^m,m \geq 2$, was shown for system \eqref{euler}.\\ \\
Let $u_0 \in H^s_\omega(\R^{2n};\R^{2n})$. We denote by $u(t;u_0)$ the time $t$ value of the solution $u$ to \eqref{symplectic_euler}. From Theorem \ref{th_gwp} we know that $u(t;u_0)$ is defined for all $t \geq 0$ and we know that the time $T$ solution map
\begin{equation}\label{solution_map}
	\Phi_T:H^s_\omega(\R^{2n};\R^{2n}) \to H^s_\omega(\R^{2n};\R^{2n}),\;u_0 \mapsto u(T;u_0),
\end{equation}
is continuous. A natural question is how regular $\Phi_T$ is. We have

\begin{Th}\label{th_nonuniform}
	Let $n \geq 1$, $s > 2n/2+1$ and $T > 0$. Then the time $T$ solution map$\Phi_T$ as in \eqref{solution_map} is nowhere locally uniformly continuous.
\end{Th}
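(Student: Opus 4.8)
The plan is to prove the ill-posedness statement in Eulerian coordinates by the standard "coherent states / high-frequency wave packet" construction, adapted to the symplectic setting, following the template used for the 2D Euler equations and for the volume-preserving case in the references. The key point is that the Lagrangian flow is smooth (Theorem \ref{th_lagrangian_formulation}), so the solution operator in Lagrangian coordinates is a well-behaved analytic flow; the loss of regularity comes entirely from the composition/conjugation by the flow map when passing back to Eulerian coordinates, and composition on Sobolev spaces is continuous but not uniformly continuous. Concretely, I would fix $u_0 \in H^s_\omega$ and a radius $\delta > 0$, and exhibit two sequences $(v_k), (w_k)$ in the ball $B_\delta(u_0)$ with $\|v_k - w_k\|_{H^s} \to 0$ but $\liminf_k \|\Phi_T(v_k) - \Phi_T(w_k)\|_{H^s} > 0$.

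First I would build the perturbations: take $u_0$ and add a high-frequency, small-amplitude, spatially localized symplectic vector field of the form $a_k(x) = \lambda_k^{-s} \rho(x) \, \chi\big(\lambda_k(x - x_0)\big) \, e^{i \lambda_k \langle \xi_0, x\rangle} \, e$ (suitably real-combined and projected onto $H_\omega$), where $\lambda_k \to \infty$, $\rho$ is a fixed bump, $\chi$ a fixed packet profile, $e$ a constant symplectic direction, and $\xi_0$ chosen so the leading-order divergence/symplectic constraint is satisfied to top order; the normalization $\lambda_k^{-s}$ (times the packet volume factor) makes $\|a_k\|_{H^s}$ of size $\varepsilon$, small compared to $\delta$, while $\|a_k\|_{H^{s-1}}$ and lower norms are $o(1)$. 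Set $v_k = u_0 + a_k$ and $w_k = u_0$ (or $u_0$ plus a phase-shifted packet), so $\|v_k - w_k\|_{H^s} \leq \varepsilon$ can be made arbitrarily small, uniformly, by first sending $k \to \infty$ within a fixed amplitude and then shrinking the amplitude. The role of the high frequency is that the solution to the symplectic Euler equation with data $v_k$ has, to leading order, its high-frequency component transported along the characteristics of $u_0$ (the bicharacteristic/transport of the packet), so that at time $T$ the packet has moved to a location and acquired a phase determined by the flow of $u_0$ and by the packet's own self-interaction, which over time $T$ produces an $O(1)$ modulation of the top-order phase. Quantitatively: $\Phi_T(v_k) - \Phi_T(w_k) = a_k \circ \varphi^{-1}_{u_0}(T) + (\text{phase/transport correction}) + o(1)$ in $H^s$, and because the correction shifts the leading phase by an amount bounded away from zero (thanks to $T>0$ and nonvanishing of the relevant symbol), the $H^s$ norm of the difference stays bounded below while the data difference goes to zero.

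The main obstacle — and the place where genuine work is needed — is controlling the error terms in the approximate solution uniformly in $k$: one must show that the true $H^s$ solution with data $v_k$ differs from the explicit geometric-optics ansatz by $o(1)$ in $H^s$ on $[0,T]$. For this I would use the Lagrangian formulation: write $v_k$'s flow as $\varphi_k(t) = \varphi_{u_0}(t) \circ \psi_k(t)$ where $\psi_k$ solves an ODE on $\Ds^s_\omega$ driven by the analytic vector field $F$ of Theorem \ref{th_lagrangian_formulation} linearized around the $u_0$-trajectory; since $F$ is analytic and the $H^s$-size of the perturbation data $a_k$ is the small parameter $\varepsilon$, Gronwall/continuous-dependence estimates in $H^s$ for this ODE give $\psi_k(t) - \text{id}$ of controlled $H^s$-size with the high-frequency oscillation faithfully tracked; then $\Phi_T(v_k) = \partial_t \varphi_k(T) \circ \varphi_k(T)^{-1}$ and one expands the composition. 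The delicate estimate is that composition with the near-identity, highly oscillatory map $\psi_k$ does not destroy the $H^s$ lower bound — this requires a non-stationary-phase / commutator argument showing the top-order part is a genuine translation-plus-phase of $a_k$ rather than something that cancels. I expect that, just as in the cited works, the cleanest route is to reduce everything to the scalar transport of the packet's phase function along the Hamiltonian flow of $u_0$, verify that over time $T$ this phase cannot be compensated by any rigid motion, and thereby conclude non-uniform continuity (hence, a fortiori, that $\Phi_T$ is nowhere locally Lipschitz); localizing the construction near an arbitrary $u_0$ and in an arbitrarily small ball is automatic since $x_0$, the amplitude $\varepsilon$, and $\lambda_k$ are free parameters.
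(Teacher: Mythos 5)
Your plan has two genuine gaps, one in the setup and one at the step you yourself flag as ``the main obstacle.'' First, the setup: with $v_k=u_0+a_k$ and $w_k=u_0$, the data difference is $\|a_k\|_{H^s}\approx\varepsilon$, which does not tend to zero along $k$, and the solution difference you predict, $a_k\circ\varphi_{u_0}(T)^{-1}+o(1)$, has $H^s$ norm comparable to that same $\varepsilon$. This is perfectly consistent with $\Phi_T$ being Lipschitz and yields no contradiction with uniform continuity. For the mechanism to work, the two data must \emph{share} the high-frequency packet and differ only in a lower-order component that changes the transport, so that after time $T$ the (identical, fixed-size) packets land at separated positions or with decorrelated phases while the data difference tends to zero. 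You gesture at this (``phase-shifted packet,'' ``two-parameter limit'') but never pin down a pair of sequences with $\|v_k-w_k\|_{H^s}\to 0$ and a lower bound on $\|\Phi_T(v_k)-\Phi_T(w_k)\|_{H^s}$ that is uniform in $k$. Second, the error control: you need the true $H^s$ solution to stay $o(1)$-close in $H^s$ to the geometric-optics ansatz on $[0,T]$, uniformly in the frequency $\lambda_k$. For a quasilinear equation this is exactly the hard part; a Gr\"onwall estimate on the linearized Lagrangian ODE controls $\psi_k(t)-\mathrm{id}$ in $H^s$ but does not by itself show that composition with the oscillatory near-identity map preserves the top-order structure of the packet rather than cancelling it. The proposal announces this step but does not carry it out, and nothing in the paper's toolbox supplies it.

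The paper takes a different and much more economical route that avoids both issues. It uses the exact conservation law $\nabla_\omega\cdot u(t)=(\nabla_\omega\cdot u_0)\circ\varphi(t)^{-1}$ (equation \eqref{frozen}), so no approximate solution and no error estimate are ever needed: a full derivative's worth of the solution is known \emph{exactly} as the initial symplectic divergence composed with the inverse flow. The two sequences are $u_{0,k}=u_\ast+v_k$ and $\tilde u_{0,k}=u_\ast+v_k+\tfrac1k w_\ast$, where $v_k$ is a fixed-$H^s$-norm bump supported in a ball of radius $r_k\sim m_\ast/(8k)$ about a point $x_\ast$ and $w_\ast$ is chosen, using analyticity of $\exp$ and a density argument (Lemma \ref{lemma_dexp}), so that $|(d_{u_\ast}\exp(w_\ast))(x_\ast)|=m_\ast>0$. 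A Taylor expansion of $\exp$ then shows the two flows differ at $x_\ast$ by at least $m_\ast/(2k)$, which exceeds the packet radius, so the two transported copies of $\nabla_\omega\cdot v_k$ have disjoint supports; Lemma \ref{nonlocal_norm} then gives the uniform lower bound. If you want to salvage your Eulerian wave-packet approach you would at minimum need to (i) restructure the data pair so both carry the same packet, and (ii) prove the $H^s$ geometric-optics error estimate, neither of which is present; I would recommend instead adopting the conservation-law argument, which reduces the whole problem to the elementary geometry of separating supports.
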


So Theorem \ref{th_nonuniform} tells us that for $T > 0$ the solution map $\Phi_T$ is not more than continuous, that it is nowhere locally Lipschitz and nowhere $C^1$. The analog of Theorem \ref{th_nonuniform} in the case of \eqref{euler} was shown in \cite{euler}.

\section{Eulerian formulation}\label{section_eulerian}

In this section we will write \eqref{symplectic_euler} as an equation on $H^s(\R^{2n};\R^{2n}), s > 2n/2+1$, in terms of $u$ without any constraints. Starting with $u_0 \in H_\omega^s(\R^{2n};\R^{2n})$ will then guarantee that \eqref{symplectic_euler} is satisfied. But let us first work out some properties of $H_\omega$ and its $L^2$ orthogonal complement $H_\omega^\perp$. Suppose that $X=(X_1,\ldots,X_{2n}) \in H_\omega$, i.e. we have $L_X \omega=0$. Note that by $\mu=\frac{1}{n!} \omega^n$ and the Leibniz property $L_X (\omega \wedge \ldots \wedge \omega)=L_X\omega \wedge \ldots \wedge \omega + \ldots + \omega \wedge \ldots \wedge L_X\omega=0$ this implies that $X \in H_\mu$, i.e. that $X$ is divergence free. From Cartan's magic formula we get
\[
	L_X \omega = (\imath_X \circ d + d \circ \imath_X) \omega = d  \imath_X \omega,
\]
where $d$ is the exterior derivative and $\imath_X$ is the contraction with $X$. In the following we represent a one form $\eta=\sum_{1 \leq k \leq 2n} a_k dx_k$ as a row vector $\eta=(a_1,\ldots,a_{2n}) \in \R^{2n}$ and a two form $\xi=\sum_{1 \leq i < j \leq 2n} b_{ij} dx_i \wedge dx_j$ as a skew symmetric matrix $\xi=(\xi_{ij})_{1 \leq i,j \leq 2n} \in \R^{2n \times 2n}_\text{skew}$ where
\[
	\xi_{ij}=\begin{cases} b_{ij},\;i < j,\\
		0,\;i=j,\\
	-b_{ij},\; i > j,\end{cases}
\]
for $1 \leq i,j \leq 2n$. Thus
\[
	\omega=\left(\begin{matrix}
		0 & 1 &  & & \\
		-1 & 0 &  & & \\
		   &   & \ddots & &\\
		   &   &        & 0 & 1 \\
		   &   &        & -1 & 0 
	\end{matrix}\right).
\]
Now consider the differential operator $P:C_c^\infty(\R^{2n};\R^{2n}) \to C_c^\infty(\R^{2n};\R^{2n \times 2n}_\text{skew})$ given by
\[
P(X)=d \imath_X \omega.
\]
Here $C_c^\infty$ refers to smooth and compactly supported. In matrix form we have for the row vector $X=(X_1,\ldots,X_{2n}) \in C_c^\infty(\R^{2n};\R^{2n})$ 
\[
	\imath_X \omega= X \cdot \omega \text{ resp. } d(X)=D(X^\top)-D(X^\top)^\top,
\]
where $D(X^\top)=(\partial_j X_i)_{1 \leq i,j \leq 2n}$ is the Jacobian matrix of $X^\top$ and $^\top$ is the transposition operator. The operator $P$ is then given by
\begin{equation}\label{operator_p}
	P(X)=\omega^\top \cdot D(X^\top)-D(X^\top)^\top \cdot \omega.
\end{equation}
The formal $L^2$-adjoint $P^\ast:C_c^\infty(\R^{2n};\R^{2n \times 2n}_\text{skew}) \to C_c^\infty(\R^{2n})$ is given by
\[
	P^\ast:Y=(Y_{ij})_{1 \leq i,j \leq 2n} \mapsto  -2 (\sum_{i=1}^{2n} \partial_i Y_{i1},\ldots,\sum_{i=1}^{2n} \partial_i Y_{i\,2n}) \cdot \omega.
\]
Note that we have $\langle P^\ast(Y),X \rangle_{L^2}=\langle Y,P(X) \rangle_{L^2}$ for $Y \in H^1(\R^{2n};\R^{2n \times 2n}_\text{skew})$ and $X \in H^1(\R^{2n};\R^{2n})$. In particular we have $P^\ast(Y) \in H_\omega^\perp$. If we denote by 
\[
	\operatorname{div}:Y=(Y_{ij})_{1 \leq i,j \leq 2n} \mapsto (\sum_{i=1}^{2n} \partial_i Y_{i1},\ldots,\sum_{i=1}^{2n} \partial_i Y_{i\,2n})
\]
the divergence operator we can write $P^\ast(Y)=-2 \operatorname{div}(Y) \cdot \omega$. The reason that we are considering $P^\ast(Y)$ is the same as in the case of \eqref{euler} where we consider $-\nabla p$. The $L^2$ orthogonal complement $H_\omega^\perp$ is by the Hodge decomposition the space of vector fields of the form $P^\ast(Y)$. We won't need this here. We will just use this assumption to get the right expressions. So we write the first equation of \eqref{symplectic_euler} as
\[
	u_t+(u \cdot \nabla)=P^\ast(Y)
\]
for some unknown $Y$. Let us define $\Omega(Y)=d(\operatorname{div}Y)$, i.e.  
\[
	\Omega_{k\ell}(Y)=\partial_\ell (\operatorname{div}Y)_k-\partial_k (\operatorname{div}Y)_\ell,\; 1 \leq k,\ell \leq 2n.
\]
With this we have $PP^\ast(Y)=2\Omega(Y)$. Taking the divergence of $\Omega(Y)$ leads for $\ell=1,\ldots,2n$ to
\[
	(\operatorname{div}\Omega(Y))_\ell=\sum_{k=1}^{2n} \partial_k \Omega_{k\ell}(Y) = \sum_{k=1}^{2n} \partial_k \partial_\ell (\operatorname{div}Y)_k - \Delta (\operatorname{div}Y)_\ell=- \Delta (\operatorname{div}Y)_\ell,
\]
since
\[
	\sum_{k=1}^{2n} \partial_k \partial_\ell (\operatorname{div}Y)_k=\partial_\ell \sum_{i,k=1}^{2n} \partial_k \partial_i Y_{ik}=0,\; \ell=1,\ldots,2n,
\]
do to the antisymmetry of $Y$. Here we use $\Delta=\sum_{1 \leq \ell \leq 2n} \partial_\ell^2$ to denote the Laplacian. So we have
\[
	-\Delta \operatorname{div} Y = \operatorname{div} \Omega(Y).
\]
By formally inverting the Laplacian we can write
\begin{equation}\label{delta_formula}
	P^\ast(Y)=2 \Delta^{-1} \operatorname{div} \Omega(Y) \cdot \omega= \Delta^{-1} \operatorname{div} P P^\ast(Y) \cdot \omega=-\frac{1}{2} \Delta^{-1} P^\ast P P^\ast(Y).
\end{equation}
Let us apply these considerations to \eqref{symplectic_euler}. If we apply $P$ to the equation $u_t+(u \cdot \nabla) u = P^\ast(Y)$ we get due to the assumption $u \in H_\omega$
\[
	P((u \cdot \nabla)u)=P P^\ast(Y).
\]
To calculate $P((u \cdot \nabla)u)$ we apply the product rule. The terms where $P$ hits the second $u$ vanish because $P(u)=0$. Thus we end up with
\begin{align*}
	&P((u \cdot \nabla)u)=\omega^\top \cdot \left(\begin{matrix}\sum_{k=1}^{2n} \partial_1 u_k \partial_k u_1 & \cdots & \sum_{k=1}^{2n} \partial_{2n} u_k \partial_k u_1 \\ \vdots & \ddots & \vdots\\ \sum_{k=1}^{2n} \partial_1 u_k \partial_k u_{2n} & \cdots & \sum_{k=1}^{2n} \partial_{2n} u_k \partial_k u_{2n} \end{matrix}\right)\\
		& -\left(\begin{matrix}\sum_{k=1}^{2n} \partial_1 u_k \partial_k u_1 & \cdots & \sum_{k=1}^{2n} \partial_{2n} u_k \partial_k u_1 \\ \vdots & \ddots & \vdots\\ \sum_{k=1}^{2n} \partial_1 u_k \partial_k u_{2n} & \cdots & \sum_{k=1}^{2n} \partial_{2n} u_k \partial_k u_{2n} \end{matrix}\right)^\top \cdot \omega=:P_H(u).
\end{align*}
Since $u \in H_\omega$ implies that $u$ is divergence free we can write $(\partial_\ell u \cdot \nabla) u=\sum_{k=1}^{2n} \partial_k (\partial_\ell u_k u),\ell=1,\ldots,2n$. Using this representation we have
\begin{align*}
	&P((u \cdot \nabla)u)=\omega^\top \cdot \left(\begin{matrix}\sum_{k=1}^{2n} \partial_k (\partial_1 u_k u_1) & \cdots & \sum_{k=1}^{2n} \partial_k (\partial_{2n} u_k u_1) \\ \vdots & \ddots & \vdots\\ \sum_{k=1}^{2n} \partial_k (\partial_1 u_k u_{2n}) & \cdots & \sum_{k=1}^{2n} \partial_k(\partial_{2n} u_k u_{2n}) \end{matrix}\right)\\
		&- \left(\begin{matrix}\sum_{k=1}^{2n} \partial_k (\partial_1 u_k u_1) & \cdots & \sum_{k=1}^{2n} \partial_k (\partial_{2n} u_k u_1) \\ \vdots & \ddots & \vdots\\ \sum_{k=1}^{2n} \partial_k (\partial_1 u_k u_{2n}) & \cdots & \sum_{k=1}^{2n} \partial_k(\partial_{2n} u_k u_{2n}) \end{matrix}\right)^\top \cdot \omega=:P_L(u).
\end{align*}
We will use the above two representations for $P((u \cdot \nabla)u)=P_H(u)=P_L(u)$ to invert the Laplacian in a rigorous way. We will use $P_H$ for the higher Fourier modes and $P_L$ for the lower Fourier modes. This trick to invert the Laplacian was used in \cite{chemin} by Chemin for the Euler equations \eqref{euler} (see also \cite{lagrangian} where this trick was used for \eqref{euler} too). To do the splitting in Fourier space consider the following Fourier cut-off multiplier on the Sobolev space $H^\sigma(\R^{2n}),\sigma \geq 0$,
\[
	\chi(D):H^\sigma(\R^{2n}) \to H^\sigma(\R^{2n}),f \mapsto \mathcal F^{-1}[\chi(\xi) \hat f(\xi)],
\]
where $\mathcal F$ is the Fourier Transform operator and $\chi(\xi),\xi \in \R^{2n}$, is the indicator function of the closed unit ball in $\R^{2n}$, i.e.
\[
	\chi(\xi)=\begin{cases} 1,&\;|\xi| \leq 1,\\0,&\;|\xi| > 1.\end{cases}
\]
Note that $\chi(D)$ is an infinitely smoothing bounded operator. The inverse of the Laplacian $\Delta^{-1}$ is a Fourier multiplier operator with symbol $-1/|\xi|^2$. For $s > 2n/2+1$ and $1 \leq i,j,k \leq 2n$ 
\[
	H^s(\R^{2n}) \times H^s(\R^{2n}) \to H^s(\R^{2n}),\;(f,g) \mapsto \Delta^{-1} (1-\chi(D)) \partial_i (\partial_j f \cdot \partial_k g) 
\]
is a bounded bilinear map since $H^{s-1}(\R^{2n})$ is a Banach algebra and the map $\Delta^{-1} (1-\chi(D)):H^{s-2}(\R^{2n}) \to H^s(\R^{2n})$ is bounded and
\[
	H^s(\R^{2n}) \times H^s(\R^{2n}) \to H^s(\R^{2n}),\;(f,g) \mapsto \Delta^{-1} \chi(D) \partial_i \partial_j (\partial_k f \cdot g) 
\]
is a bounded bilinear map as well since $\partial_i \partial_j \Delta^{-1} \chi(D):H^{s-1}(\R^{2n}) \to H^s(\R^{2n})$ is bounded. So
\[
	H^s(\R^{2n};\R^{2n}) \to H^s(\R^{2n};\R^{2n}), u \mapsto \Delta^{-1} (1-\chi(D)) P^\ast P_H(u)
\]
resp.
\[
	H^s(\R^{2n};\R^{2n}) \to H^s(\R^{2n};\R^{2n}), u \mapsto \Delta^{-1} \chi(D) P^\ast P_L(u)
\]
are bounded quadratic forms. Now we write
\[
	P((u \cdot \nabla)u)=(1-\chi(D)) P_H(u)+\chi(D) P_L(u)=PP^\ast(Y).
\]
Motivated by \eqref{delta_formula} we apply $P^\ast$ and $\Delta^{-1}$ to get
\[
	P^\ast(Y)=-\frac{1}{2} \Delta^{-1} (1-\chi(D)) P^\ast P_H(u)  -\frac{1}{2} \Delta^{-1} \chi(D) P^\ast P_L(u) 
\]
So we can replace \eqref{symplectic_euler} by
\begin{equation}\label{symplectic_euler_new}
	u_t + (u \cdot \nabla) u=B(u),\;u(0)=u_0 \in H^s_\omega(\R^{2n};\R^{2n}),
\end{equation}
where $B$ is the bounded quadratic form
\begin{align}
	\label{b_formula}
	\begin{split}
	&B:H^s(\R^{2n};\R^{2n}) \to H^s(\R^{2n};\R^{2n}),\\
	&u  \mapsto -\frac{1}{2} \Delta^{-1} (1-\chi(D)) P^\ast P_H(u) -\frac{1}{2} \Delta^{-1} \chi(D) P^\ast P_L(u).
	\end{split}
\end{align}
To see that $B(u) \in H^s_\omega(\R^{2n};\R^{2n})^\perp$ we take $w \in H^s_\omega(\R^{2n};\R^{2n})$, i.e. $P(w)=0$ and consider $\langle B(u),w \rangle_{L^2}$. For the $P_H$ part of $B(u)$ we have
\[
	\langle -\frac{1}{2} \Delta^{-1} (1-\chi(D)) P^\ast P_H(u),w \rangle_{L^2}=
	\langle -\frac{1}{2} \Delta^{-1} (1-\chi(D)) P_H(u),P(w) \rangle_{L^2}=0.
\]
For the $P_L$ part of $B(u)$ we have
\[
	\langle -\frac{1}{2} \Delta^{-1} \chi(D) P^\ast P_L(u),w \rangle_{L^2} = \langle \frac{1}{2} (-\Delta)^{-1/2} \chi(D) P_L(u),(-\Delta)^{-1/2} P(w) \rangle_{L^2}=0.
\]
Thus we have $\langle B(u),w \rangle_{L^2}=0$ and hence $B(u) \in H^s_\omega(\R^{2n};\R^{2n})^\perp$.
In the following we will show that \eqref{symplectic_euler} and \eqref{symplectic_euler_new} are equivalent. 

\begin{Lemma}\label{lemma_old_to_new}
	Let $s > 2n/2+1$. Suppose that $u \in C([0,T];H^s_\omega(\R^{2n};\R^{2n}))$ is a solution to \eqref{symplectic_euler} for some $T > 0$. Then $u$ is a solution to \eqref{symplectic_euler_new} on $[0,T]$.
\end{Lemma}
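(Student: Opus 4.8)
The plan is to show that the unknown ``pressure'' term $Z$ appearing in the definition of a solution of \eqref{symplectic_euler} is forced to equal $B(u)$; once this is established, the integral equation defining a solution of \eqref{symplectic_euler} becomes verbatim the integral equation for \eqref{symplectic_euler_new}. So let $u \in C([0,T];H^s_\omega(\R^{2n};\R^{2n}))$ be a solution of \eqref{symplectic_euler} and let $Z \in C([0,T];H^{s-1}_\omega(\R^{2n};\R^{2n})^\perp)$ be the associated vector field, so that $u(t)=u_0+\int_0^t \big(Z(s)-(u(s)\cdot\nabla)u(s)\big)\,ds$ for $0 \leq t \leq T$ and $\langle Z(t),w\rangle_{L^2}=0$ for every symplectic $w$. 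We must prove $Z(t)=B(u(t))$ for all $t$.

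First I would apply the first order, constant coefficient operator $P$ to the integral equation. Since $P:H^{s-1}(\R^{2n};\R^{2n}) \to H^{s-2}(\R^{2n};\R^{2n\times 2n}_{\text{skew}})$ is bounded and linear it commutes with the Bochner integral, and since $u(t),u_0 \in H^s_\omega$ we have $P(u(t))=P(u_0)=0$; hence $\int_0^t \big(P(Z(s))-P((u(s)\cdot\nabla)u(s))\big)\,ds=0$ for every $t$, and as the integrand is continuous in $s$ (with values in $H^{s-2}$), differentiating in $t$ yields $P(Z(t))=P((u(t)\cdot\nabla)u(t))$ for all $t \in [0,T]$.

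Next I would compute $P(B(u))$ and show that it also equals $P((u\cdot\nabla)u)$. Since $P$ has constant coefficients it commutes with the Fourier multipliers $\chi(D)$, $1-\chi(D)$ and $\Delta^{-1}$, so from \eqref{b_formula}
\[
	P(B(u))=-\tfrac12 \Delta^{-1}(1-\chi(D))\,PP^\ast P_H(u)-\tfrac12 \Delta^{-1}\chi(D)\,PP^\ast P_L(u).
\]
Using $PP^\ast(Y)=2\,\Omega(Y)=2\,d(\operatorname{div}Y)$ together with the algebraic identity $d(\operatorname{div}W)=-\Delta W$, valid for every closed two-form $W$ (the two-form analogue of the already used identity $\operatorname{div}\Omega(Y)=-\Delta\operatorname{div}Y$; it follows from $dW=0$ by a single differentiation), applied to the exact, hence closed, two-form $W:=P((u\cdot\nabla)u)=P_H(u)=P_L(u)$, I obtain $P(B(u))=\big((1-\chi(D))+\chi(D)\big)W=W=P((u\cdot\nabla)u)$. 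The step where care is needed is exactly the one where Chemin's high/low frequency splitting enters: $\Delta^{-1}$ alone is unbounded on $\R^{2n}$, so one keeps the representation $P_H(u)$ on high frequencies, where $\Delta^{-1}(1-\chi(D))$ is bounded on the relevant Sobolev spaces, and the representation $P_L(u)$ on low frequencies, where $\Delta^{-1}\chi(D)\partial_i\partial_j$ is bounded; with these choices every operator above is a genuine bounded map, and the multiplier identities $\Delta^{-1}(1-\chi(D))\Delta=1-\chi(D)$ and $\Delta^{-1}\chi(D)\Delta=\chi(D)$ literally hold.

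Combining the last two steps, $V(t):=Z(t)-B(u(t)) \in H^{s-1}(\R^{2n};\R^{2n})$ satisfies $P(V(t))=0$, i.e. $V(t)$ is a symplectic vector field. On the other hand $V(t)$ is $L^2$-orthogonal to every symplectic vector field: $Z(t)$ is so by hypothesis, and $B(u(t))$ is so by the computation carried out just before Lemma \ref{lemma_old_to_new}, which only used $P(w)=0$ and therefore applies to $w=V(t)$. Pairing $V(t)$ with itself gives $\|V(t)\|_{L^2}^2=\langle V(t),V(t)\rangle_{L^2}=0$, so $Z(t)=B(u(t))$ for all $t \in [0,T]$. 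Substituting this into the integral equation gives $u(t)=u_0+\int_0^t \big(B(u(s))-(u(s)\cdot\nabla)u(s)\big)\,ds$, which is precisely the integral form of \eqref{symplectic_euler_new}. I expect the main obstacle to be the middle step, namely carrying out the formal inversion of the Laplacian in $P(B(u))=P((u\cdot\nabla)u)$ rigorously by keeping track of Sobolev regularities so that the Chemin splitting does its job; the orthogonality argument in the final step is then routine.
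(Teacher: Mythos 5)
Your proof is correct and follows essentially the same route as the paper: identify $Z$ with $B(u)$ by showing that $Z-B(u)$ lies in $H^{s-1}_\omega(\R^{2n};\R^{2n})\cap H^{s-1}_\omega(\R^{2n};\R^{2n})^\perp=\{0\}$, using the high/low frequency splitting to invert the Laplacian rigorously. The only (harmless) variation is in the middle step: the paper applies $P^\ast P$ to $B(u)$ and uses $-\tfrac12 P^\ast P P^\ast=\Delta P^\ast$ before deducing $P(B(u)-Z)=0$, whereas you apply $P$ directly and use the identity $d(\operatorname{div}W)=-\Delta W$ for the closed two-form $W=P((u\cdot\nabla)u)$, which is valid and is just the two-form analogue of the paper's $\operatorname{div}\Omega(Y)=-\Delta\operatorname{div}Y$.
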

\begin{proof}
	Suppose that $u \in C([0,T];H^s_\omega(\R^{2n};\R^{2n}))$ is a solution to \eqref{symplectic_euler}. We define
	\[
		Z=u_t+(u \cdot \nabla) u \in C([0,T];H^{s-1}_\omega(\R^{2n};\R^{2n})^\perp).
	\]
By using \eqref{delta_formula} we get
	\[
		P^\ast P(B(u))=P^\ast((1-\chi(D)) P_H(u) + \chi(D) P_L(u))=P^\ast P((u \cdot \nabla) u))=P^\ast P(Z)
	\]
	since $u \in H^s_\omega(\R^{2n};\R^{2n})$. So we have $P(B(u)-Z)=0$, i.e. $B(u)-Z \in H^{s-1}_\omega(\R^{2n};\R^{2n})$. On the other hand we have $B(u)-Z \in H^{s-1}_\omega(\R^{2n};\R^{2n})^\perp$. So we conclude $Z=B(u)$. This shows that $u$ is a solution to \eqref{symplectic_euler_new}. Note that the proof shows in particular that $u_t + (u \cdot \nabla)u$ is in $H^s$.
\end{proof}

The other direction reads as

\begin{Lemma}\label{lemma_new_to_old}
	Let $s > 2n/2+1$. Suppose that $u \in C([0,T];H^s(\R^{2n};\R^{2n}))$ is a solution to \eqref{symplectic_euler_new} for some $T > 0$. Then $u \in C([0,T];H^s_\omega(\R^{2n};\R^{2n}))$. In particular $u$ is a solution to \eqref{symplectic_euler} on $[0,T]$.
\end{Lemma}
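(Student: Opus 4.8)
The plan is to track the ``symplectic vorticity'' $\gamma(t):=P(u(t))=d\,\imath_{u(t)}\omega$, to show it satisfies a linear transport equation with zero initial data (the analogue of the Euler--Arnold vorticity equation $\partial_t\gamma+L_u\gamma=0$), and to conclude $\gamma\equiv 0$ by an $L^2$ energy estimate. Since $u\in C([0,T];H^s(\R^{2n};\R^{2n}))$ solves \eqref{symplectic_euler_new} in integral form and $P:H^s\to H^{s-1}$ is a bounded constant coefficient operator, $\gamma\in C([0,T];H^{s-1})\cap C^1([0,T];H^{s-2})$, $\gamma(0)=P(u_0)=0$ (as $u_0\in H^s_\omega$), and $\partial_t\gamma=P(u_t)=P(B(u))-P((u\cdot\nabla)u)$. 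The first ingredient is the identity $P((u\cdot\nabla)u)=P_H(u)+(u\cdot\nabla)\gamma$, valid for \emph{every} $u\in H^s$ and not only for $u\in H_\omega$: it follows from $D(((u\cdot\nabla)u)^\top)=M(u)+(u\cdot\nabla)D(u^\top)$, where $M(u)$ is the matrix appearing inside $P_H$, upon substituting into $P(X)=\omega^\top D(X^\top)-D(X^\top)^\top\omega$. Hence $\partial_t\gamma+(u\cdot\nabla)\gamma=P(B(u))-P_H(u)=:\mathcal N(u)$.

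The core of the argument is the estimate $\|\mathcal N(u)\|_{L^2}\le C(\|u\|_{H^s})\,\|\gamma\|_{L^2}$. From \eqref{b_formula} and $PP^\ast=2\Omega$ one has $P(B(u))=-(1-\chi(D))\Delta^{-1}\Omega(P_H(u))-\chi(D)\Delta^{-1}\Omega(P_L(u))$. I would then use the Hodge type identity $\Omega(\Theta)=-\Delta\Theta+\delta(d\Theta)$, valid for any skew matrix valued $\Theta$, where $\delta$ is the divergence $(\delta\Xi)_{k\ell}=\sum_i\partial_i\Xi_{ik\ell}$ of a $3$-form; it comes from applying $\sum_i\partial_i$ to $\partial_\ell\Theta_{ik}-\partial_k\Theta_{i\ell}=(d\Theta)_{ik\ell}-\partial_i\Theta_{k\ell}$. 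Thus $\Delta^{-1}\Omega(\Theta)=-\Theta+\Delta^{-1}\delta(d\Theta)$, so
\[
	\mathcal N(u)=-\chi(D)\big(P_H(u)-P_L(u)\big)-(1-\chi(D))\Delta^{-1}\delta(dP_H(u))-\chi(D)\Delta^{-1}\delta(dP_L(u)).
\]
The decisive observation is that $dP_H(u)$ and $dP_L(u)$ are \emph{linear} in $\gamma$: because $P((u\cdot\nabla)u)=d\,\imath_{(u\cdot\nabla)u}\omega$ is closed, the identity of the previous paragraph gives $dP_H(u)=-d((u\cdot\nabla)\gamma)$, and since $d\gamma=0$ this equals $-E(u,\gamma)$ with $E(u,\gamma)_{ijk}=\sum_l(\partial_i u_l\,\partial_l\gamma_{jk}+\partial_j u_l\,\partial_l\gamma_{ki}+\partial_k u_l\,\partial_l\gamma_{ij})$, a first order expression linear in $\gamma$ with coefficients in $H^{s-1}$. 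Similarly $P_L(u)-P_H(u)=\omega^\top R-R^\top\omega$ with $R_{ij}=u_i\,\partial_j(\operatorname{div}u)$, and $\operatorname{div}u$ is a bounded order zero linear function of $\gamma$ since $\mu=\tfrac1{n!}\omega^n$ forces $(\operatorname{div}u)\,\omega^n=n\,\gamma\wedge\omega^{n-1}$; hence $dP_L(u)=dP_H(u)+d(\omega^\top R-R^\top\omega)$ is linear in $\gamma$ too. Consequently every term of $\mathcal N(u)$ is $\chi(D)$ (infinitely smoothing) or $(1-\chi(D))\Delta^{-1}:H^{-2}\to L^2$ applied to a quantity bounded in $H^{-1}$ by $C(\|u\|_{H^s})\|\gamma\|_{L^2}$, using that multiplication $H^{s-1}(\R^{2n})\times H^{-1}(\R^{2n})\to H^{-1}(\R^{2n})$ is bounded because $s-1>2n/2$. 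This gives the claimed estimate.

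The remainder is routine. Since $\gamma\in C^1([0,T];L^2)$ (note $s-2>0$) and $u$ is Lipschitz in space, integration by parts yields
\[
	\tfrac{d}{dt}\|\gamma\|_{L^2}^2=-2\langle(u\cdot\nabla)\gamma,\gamma\rangle_{L^2}+2\langle\mathcal N(u),\gamma\rangle_{L^2}\le\big(\|\operatorname{div}u\|_{L^\infty}+2C(\|u\|_{H^s})\big)\|\gamma\|_{L^2}^2,
\]
and the right hand side is $\le c(t)\|\gamma\|_{L^2}^2$ with $c$ bounded on $[0,T]$ by continuity of $t\mapsto u(t)$ into $H^s$. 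Gronwall's inequality and $\gamma(0)=0$ force $\gamma\equiv0$, i.e. $P(u(t))=0$ for all $t\in[0,T]$; since $H^s_\omega=\ker(P|_{H^s})$ is closed in $H^s$ and $u\in C([0,T];H^s)$, we get $u\in C([0,T];H^s_\omega(\R^{2n};\R^{2n}))$. Finally $u(t)=u_0+\int_0^t B(u(\tau))-(u(\tau)\cdot\nabla)u(\tau)\,d\tau$ with $u(\tau)\in H^s_\omega$ and $B(u(\tau))\in H^{s-1}_\omega(\R^{2n};\R^{2n})^\perp$ (as shown before the lemma), which is precisely the definition of a solution of \eqref{symplectic_euler} on $[0,T]$.

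I expect the main obstacle to be the second paragraph, namely obtaining $\|\mathcal N(u)\|_{L^2}\le C(\|u\|_{H^s})\|\gamma\|_{L^2}$ with the loss measured in $\|\gamma\|_{L^2}$ rather than in $\|\gamma\|_{H^{s-1}}$. This rests on the two structural facts that (i) the ``exact'' parts $d\,\imath_{(u\cdot\nabla)u}\omega$ are closed, so $dP_H(u)$ and $dP_L(u)$ are genuinely linear in $\gamma$, and (ii) the Hodge identity makes the operator $\Delta^{-1}$ hidden inside $B$ act exactly on these low order quantities, recovering the two derivatives; keeping the Fourier cut-off pieces in order and using only the multiplication estimates available at the threshold regularity $s>2n/2+1$ is the delicate part.
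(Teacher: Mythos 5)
Your overall strategy is the same as the paper's: track $\gamma=P(u)$, derive $\partial_t\gamma+(u\cdot\nabla)\gamma=PB(u)-P_H(u)$ using the (correct, and also used in the paper) identities $P((u\cdot\nabla)u)=P_H(u)+(u\cdot\nabla)\gamma$ and $P_L-P_H=Q(u)$ with $\operatorname{div}u$ controlled by $\|\gamma\|_{L^2}$, and close an $L^2$ energy estimate by Gr\"onwall. The gap is exactly where you suspected it: the claimed bound $\|\mathcal N(u)\|_{L^2}\le C(\|u\|_{H^s})\|\gamma\|_{L^2}$ is not established by your argument, and the obstruction is the low-frequency piece. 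After the Hodge identity, the term $\chi(D)\Delta^{-1}\delta\bigl(dP_L(u)\bigr)$ must be estimated using the representation $dP_L=-E(u,\gamma)+d(\omega^\top R-R^\top\omega)$ in order to be linear in $\gamma$; but then only the single derivative coming from $\delta$ sits outside the $E(u,\gamma)$ part, and the operator $\chi(D)\Delta^{-1}\partial_a$ has symbol of size $|\xi|^{-1}$ near $\xi=0$, hence is \emph{not} bounded on $L^2$-based spaces (nor, in dimension $2n=2$, from any $L^p$ into $L^2$). Your phrase ``$\chi(D)$ (infinitely smoothing)'' conflates derivative gain with control of the singularity of $\Delta^{-1}$ at the origin; $\chi(D)$ provides the former but not the latter. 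Trying to extract a second exterior derivative from $E(u,\gamma)=\nabla u\cdot\nabla\gamma$ produces a remainder of the schematic form $\nabla\operatorname{div}u\otimes\gamma$ facing the same problem, so the argument regresses rather than closes. (The quantity $\chi(D)\Delta^{-1}\delta(dP_L)$ is well defined only because $\delta(dP_L)=\Delta P_L+\Omega(P_L)$ carries two derivatives outside the \emph{quadratic-in-$u$} entries of $P_L$; but that representation gives a bound by $\|u\|_{H^s}^2$, not by $\|\gamma\|_{L^2}$.)

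The paper avoids forming $PB(u)$ as a standalone $L^2$ function altogether. It estimates only the pairing $\langle PB(u),P(u)\rangle_{L^2}=\langle P^\ast PB(u),u\rangle_{L^2}$, where the exact identity $P^\ast PP^\ast=-2\Delta P^\ast$ cancels $\Delta^{-1}$ completely and leaves $\langle (1-\chi(D))P_H(u)+\chi(D)P_L(u),P(u)\rangle_{L^2}$; the only low-frequency term that survives, $\langle\chi(D)Q(u),P(u)\rangle_{L^2}$, is then handled by moving the derivative onto $f\,\chi(D)P(u)$ via the integration by parts $\langle\chi(D)(f\partial_ig),h\rangle_{L^2}=-\langle g,\partial_i(f\chi(D)h)\rangle_{L^2}$, where $\chi(D):L^2\to H^1$ is genuinely bounded. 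If you replace your second paragraph by this pairing argument (keeping your transport identity and your treatment of $(u\cdot\nabla)\gamma$ and of $Q$, which are correct), the proof closes; as written, the central estimate is unproved and, at least for $n=1$, it is doubtful it can be obtained by the route you describe.
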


\begin{proof}
	Taking the $t$ derivative of $\frac{1}{2} \langle P(u),P(u) \rangle_{L^2}$ gives
	\begin{align*}
		\frac{1}{2} \frac{d}{dt} \langle P(u),P(u) \rangle_{L^2}=\langle PB(u),P(u) \rangle_{L^2}-\langle P((u \cdot \nabla)u),P(u) \rangle_{L^2}. 	
	\end{align*}
We have by \eqref{delta_formula}
\begin{align*}
	\langle PB(u),P(u) \rangle_{L^2}&=\langle P^\ast PB(u),u \rangle_{L^2}\\&=\langle (1-\chi(D)) P_H(u) +\chi(D) P_L(u),P(u) \rangle_{L^2}.
\end{align*}
	If we carry out the derivative in $P_L(u,u)$ we get
\[
	P_L(u)=P_H(u) + Q(u),
\]
where
	\begin{align*}
		Q(u)&=\omega^\top \cdot \left(\begin{matrix}u_1 \partial_1 \operatorname{div}u & \cdots &  u_1 \partial_1 \operatorname{div}u \\ \vdots & \ddots & \vdots\\ u_{2n} \partial_1 \operatorname{div}u  & \cdots & u_{2n} \partial_{2n} \operatorname{div}u\end{matrix}\right)\\
			&- \left(\begin{matrix}u_1 \partial_1 \operatorname{div}u & \cdots &  u_1 \partial_1 \operatorname{div}u \\ \vdots & \ddots & \vdots\\ u_{2n} \partial_1 \operatorname{div}u  & \cdots & u_{2n} \partial_{2n} \operatorname{div}u\end{matrix}\right)^\top \cdot \omega
	\end{align*}
Thus 
	\begin{align*}
		&(1-\chi(D)) P_H(u) +\chi(D) P_L(u) =P_H(u) + \chi(D) Q(u).
	\end{align*}
	For $P((u \cdot \nabla)u)$ we have
	\[
		P((u \cdot \nabla) u)=P_H(u)+ (u \cdot \nabla) P(u).	
	\]
Combining the above expressions gives
	\[
		\frac{1}{2} \frac{d}{dt} \langle P(u),P(u) \rangle_{L^2}=\langle \chi(D)Q(u),P(u) \rangle_{L^2}-\langle (u \cdot \nabla) P(u),P(u) \rangle_{L^2}.
	\]
Note that $P(u)$ has components $\partial_{2i-1}u_{2i-1}+\partial_{2i} u_{2i},i=1,\ldots,n$. Thus we have $\|\operatorname{div}u\|_{L^2} \leq C \|P(u)\|_{L^2}$ for some $C > 0$ independent of $u$. Now we have for functions $f,g,h$ and $i=1,\ldots,2n$
	\[
		\langle \chi(D)(f \partial_i g),h \rangle_{L^2}=-\langle g, \partial_i (f \chi(D) h) \rangle_{L^2}.
	\]
Applying Cauchy-Schwarz gives
	\begin{align}
		\label{Q_estimate}
		\begin{split}
		|\langle \chi(D)(f \partial_i g),h \rangle_{L^2}| &=|\langle g, \partial_i (f \chi(D) h) \rangle_{L^2}| \leq \|g\|_{L^2} \|f \chi(D)h\|_{H^1} \\
		&\leq C \|g\|_{L^2} \|f\|_{H^s}  \|\chi(D)h\|_{H^1} \leq C \|g\|_{L^2} \|f\|_{H^s} \|h\|_{L^2},
		\end{split}
	\end{align}
	for some $C > 0$. Here we used that multiplication $H^s \times H^1 \to H^1$ is bounded for $s > 2n/2+1$ and that the smoothing operator $\chi(D):L^2 \to H^1$ is bounded. So we get by using $f=u,g=\operatorname{div}u,h=P(u)$ above
	\[
		|\langle \chi(D)Q(u),P(u) \rangle_{L^2}| \leq C \|u\|_{H^s} \|P(u)\|_{L^2}^2 
	\]
	for some $C > 0$. For $\langle (u \cdot \nabla) P(u),P(u) \rangle_{L^2}$ we have
\[
	\langle (u \cdot \nabla) P(u),P(u) \rangle_{L^2}=-\langle P(u),(u \cdot \nabla) P(u) \rangle_{L^2}-\langle P(u),P(u) \operatorname{div}u\rangle_{L^2}. 
\]
Thus we get
	\begin{align*}
		|\langle (u \cdot \nabla) P(u),P(u) \rangle_{L^2}|&=\frac{1}{2} |\langle P(u),P(u) \operatorname{div}u\rangle_{L^2}| \\
		&\leq \|\operatorname{div}u\|_{L^\infty} \|P(u)\|_{L^2}^2 \leq C \|u\|_{H^s} \|P(u)\|_{L^2}^2,
	\end{align*}
where we used the Sobolev Imbedding $H^s \hookrightarrow C^1_0$. Finally we end up with
	\begin{equation}\label{p_ineq}
		\frac{d}{dt} \|P(u)\|_{L^2}^2 \leq C \|u\|_{H^s} \|P(u)\|_{L^2}^2 
	\end{equation}
	on $[0,T]$ for some $C > 0$ independent of $u$. Using Gr\"onwall's inequality we conclude that if $u_0 \in H_\omega$, i.e. $P(u_0)=0$, then $P(u)=0$ on $[0,T]$. In other words $u \in C([0,T];H^s_\omega(\R^{2n};\R^{2n}))$. So $u$ solves \eqref{symplectic_euler} on $[0,T]$.
\end{proof}

Lemma \ref{lemma_old_to_new} and Lemma \ref{lemma_new_to_old} show that \eqref{symplectic_euler} and \eqref{symplectic_euler_new} are equivalent. So from now on we will work with \eqref{symplectic_euler_new}.

\section{Lagrangian formulation}\label{section_lagrangian}

The goal of this section is to write \eqref{symplectic_euler_new} in Lagrangian variables. But we will consider it without an initial constraint. More precisely let $n \geq 1$ and $s > 2n/2+1$. We consider
\begin{equation}\label{symplectic_euler_extended}
	u_t + (u \cdot \nabla)u=B(u),\;u(0)=u_0 \in H^s(\R^{2n};\R^{2n}),
\end{equation}
where $B(u)$ is given by \eqref{b_formula}. Suppose now that $u \in C([0,T];H^s(\R^{2n};\R^{2n}))$ is a solution to \eqref{symplectic_euler_extended} on some time interval $[0,T],T \geq 0$. From \cite{lagrangian} we know that $u$ generates a unique flow map $\varphi \in C^1([0,T];\Ds^s(\R^{2n}))$ satisfying
\begin{equation}\label{flow_map}
	\varphi_t(t)=u(t) \circ \varphi(t), 0 \leq t \leq T,\;\varphi(0)=\text{id}.
\end{equation}
Here $\text{id}:\R^{2n} \to \R^{2n}$ is the identity diffeomorphism. Taking the $t$ derivative of $\varphi_t = u \circ \varphi$ gives
\[
	\varphi_{tt}=(u_t + (u \cdot \nabla)) \circ \varphi=B(u) \circ \varphi=B(\varphi_t \circ \varphi^{-1}) \circ \varphi.
\]
This computation motivates us to define $F(\varphi,v)=B(v \circ \varphi^{-1}) \circ \varphi$. By the regularity properties proved in \cite{composition} we know that for $s > 2n/2+1$ and $0 \leq s' \leq s$
\[
	H^{s'}(\R^{2n}) \times \Ds^s(\R^{2n}) \to H^{s'}(\R^{2n}),\;(f,\varphi) \mapsto f \circ \varphi,
\]
and
\[
	\Ds^s(\R^{2n}) \to \Ds^s(\R^{2n}),\; \varphi \mapsto \varphi^{-1},
\]
are continuous. Thus we see by the boundedness of $B$ in $H^s$ that
\begin{align}
	\label{f_map}
	\begin{split}
		F:\Ds^s(\R^{2n}) \times H^s(\R^{2n};\R^{2n}) &\to H^s(\R^{2n};\R^{2n}),\\
	(\varphi,v) &\mapsto F(\varphi,v)=B(v \circ \varphi^{-1}) \circ \varphi,
	\end{split}
\end{align}
is continuous. We actually have more regularity.

\begin{Lemma}\label{f_regularity}
	Let $n \geq 1$ and $s > 2n/2+1$. Then the map $F$ in \eqref{f_map} is real analytic.
\end{Lemma}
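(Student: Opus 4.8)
The plan is to exploit the concrete structure of $B$ rather than to treat $F$ as a composition of "black box" analytic maps, because the basic operations $R_\varphi\colon g\mapsto g\circ\varphi$ and $\varphi\mapsto\varphi^{-1}$ are only \emph{continuous} — not analytic, not even $C^1$ — on $H^s$ and on $\Ds^s(\R^{2n})$. Write $R_\psi(g)=g\circ\psi$, so that $F(\varphi,v)=\bigl(R_\varphi\circ B\circ R_{\varphi^{-1}}\bigr)(v)$. First I would record two soft facts: $B$ itself, being a bounded quadratic form on $H^s(\R^{2n};\R^{2n})$, is real analytic (a continuous homogeneous polynomial between Banach spaces equals its own Taylor series); and, reading off \eqref{b_formula} together with the bilinear bounds of Section~\ref{section_eulerian}, $B$ decomposes as a finite sum $B(u)=\sum_\ell c_\ell\,T_\ell\bigl(\partial_{j_\ell}u_{a_\ell}\cdot\partial_{k_\ell}u_{b_\ell}\bigr)$ (with one factor undifferentiated in the $\chi(D)$-terms), where the $c_\ell$ are constants coming from $\omega$ and each $T_\ell$ is one of the two fixed Fourier multipliers $\Delta^{-1}(1-\chi(D))\partial_i\colon H^{s-1}\to H^s$ or $\Delta^{-1}\chi(D)\partial_i\partial_j\colon H^{s-1}\to H^s$, both bounded by the arguments already given.

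Next I would push the conjugation through this decomposition. Since conjugation by $R_\varphi$ is multiplicative and $R_\varphi$ commutes with pointwise multiplication, one obtains
\[
F(\varphi,v)=\sum_\ell c_\ell\,\bigl(R_\varphi T_\ell R_{\varphi^{-1}}\bigr)\bigl(h_\ell(\varphi,v)\bigr),\qquad h_\ell(\varphi,v):=\bigl(R_\varphi\partial_{j_\ell}R_{\varphi^{-1}}v_{a_\ell}\bigr)\cdot\bigl(R_\varphi\partial_{k_\ell}R_{\varphi^{-1}}v_{b_\ell}\bigr).
\]
The chain rule gives the algebraic identity $R_\varphi\circ\partial_i\circ R_{\varphi^{-1}}=\sum_m [(d\varphi)^{-1}]_{mi}\,\partial_m$, so $h_\ell(\varphi,v)$ is an explicit finite sum of products of entries of $(d\varphi)^{-1}$ with first derivatives of $v$. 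Using that $\varphi\mapsto (d\varphi)^{-1}$ is real analytic from $\Ds^s(\R^{2n})$ into $I+H^{s-1}(\R^{2n};\R^{2n\times 2n})$ — which follows, as in \cite{composition,lagrangian}, from $\varphi\mapsto d\varphi$ being continuous affine, from the uniform lower bound on $\det d\varphi$ on bounded sets, and from analyticity of matrix inversion and of the reciprocal on the Banach algebra $H^{s-1}(\R^{2n})$ (here $s-1>2n/2$) — together with boundedness of multiplication $H^{s-1}\times H^{s-1}\to H^{s-1}$, one gets that $(\varphi,v)\mapsto h_\ell(\varphi,v)\in H^{s-1}(\R^{2n};\R^{2n})$ is real analytic. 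Hence it remains to prove that for each $\ell$ the map $(\varphi,h)\mapsto \bigl(R_\varphi T_\ell R_{\varphi^{-1}}\bigr)(h)$, from $\Ds^s(\R^{2n})\times H^{s-1}(\R^{2n})$ to $H^s(\R^{2n})$, is real analytic; since operator composition is bounded bilinear and composition of analytic maps is analytic, this would close the argument.

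For the low-frequency part, $T_\ell=\Delta^{-1}\chi(D)\partial_i\partial_j$, this is comparatively soft: because $\chi$ is supported in the closed unit ball, $T_\ell$ is bounded $H^{s-1}\to H^{s+k}$ for every $k\ge0$ with norm growing at most like $2^{k/2}$, whereas the $k$-th Fréchet derivative in $\varphi$ of $\varphi\mapsto g\circ\varphi$ only differentiates $g$ and costs only $k$ orders of spatial smoothness of $g$; combining these with the standard multilinear bounds for composition on bounded subsets of $\Ds^s(\R^{2n})$ yields a power series for $\varphi\mapsto R_\varphi T_\ell$ that converges in $\mathcal L(H^{s-1},H^s)$ on a fixed-size neighbourhood of any $\varphi_0$. (Equivalently: functions in the range of $\chi(D)$ are band-limited, so composing them with $\varphi$ is handled analytically.) The genuinely hard case — and the one carrying the weight of the lemma — is the high-frequency part $T_\ell=\Delta^{-1}(1-\chi(D))\partial_i$, which gains only one derivative, so the derivative-counting argument above breaks down. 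Here I would use the Chemin splitting already invoked in Section~\ref{section_eulerian} (cf.\ \cite{chemin,lagrangian}): changing variables $y=\varphi(z)$ in the convolution gives
\[
\bigl(R_\varphi T_\ell R_{\varphi^{-1}} h\bigr)(x)=\int_{\R^{2n}} \mathcal K_\ell\bigl(\varphi(x)-\varphi(z)\bigr)\,h(z)\,\det d\varphi(z)\,dz ,
\]
where $\mathcal K_\ell=\mathcal F^{-1}[\text{symbol of }T_\ell]$ is a Calder\'on--Zygmund-type kernel of order $-1$ (homogeneous part comparable to $x_i/|x|^{2n}$, plus a bounded remainder). Using $\varphi(x)-\varphi(z)=\bigl(\int_0^1 d\varphi(z+t(x-z))\,dt\bigr)(x-z)$, whose matrix factor is close to $I$ when $\varphi$ is near $\id$ and has determinant bounded below uniformly on $\Ds^s(\R^{2n})$, one expands $\mathcal K_\ell\bigl(\varphi(x)-\varphi(z)\bigr)$ as a power series in $\varphi-\id\in H^s$ about $\id$ (first conjugating by a fixed translation to reduce a general base point $\varphi_0$ to the case near $\id$), exhibiting $\varphi\mapsto R_\varphi T_\ell R_{\varphi^{-1}}$ as an $\mathcal L(H^{s-1},H^s)$-valued analytic map via the mapping properties of order-$(-1)$ Calder\'on--Zygmund operators with $H^{s-1}$-coefficients. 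I expect this last step — showing that the twisted singular-integral operators map $H^{s-1}\to H^s$ boundedly with norms depending \emph{analytically}, not merely continuously, on the coefficient $\varphi-\id$, uniformly on bounded sets — to be the main obstacle; it is precisely the $\omega$-analogue of what is carried out in \cite{lagrangian} for the Euler pressure, and I would adapt that argument.
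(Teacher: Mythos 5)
Your strategy is essentially the paper's: the paper also refuses to treat $B$ as a black box and instead decomposes $F$ into conjugated first-order derivatives $[\partial_j(\cdot\circ\varphi^{-1})]\circ\varphi$ (analytic because the chain rule eliminates the composition, exactly your identity $R_\varphi\partial_i R_{\varphi^{-1}}=\sum_m[(d\varphi)^{-1}]_{mi}\partial_m$), products in the Banach algebra $H^{s-1}$, and the two conjugated Fourier multipliers $\Delta^{-1}(1-\chi(D))$ and $\partial_i\partial_j\Delta^{-1}\chi(D)$, whose analyticity it imports from \cite{lagrangian} rather than re-proving. The one structural difference is that for the high-frequency part the paper keeps the outer $\partial_i$ as a separate conjugated first-order operator (handled by the purely local chain-rule formula, $H^{s-1}\to H^{s-2}$) and only conjugates the order $-2$ smoothing operator $\Delta^{-1}(1-\chi(D)):H^{s-2}\to H^s$; by folding $\partial_i$ into the multiplier you commit yourself to an order $-1$ twisted kernel, which is harder than necessary.

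One step in your write-up does not work as stated: for the low-frequency part you propose to show that $\varphi\mapsto R_\varphi T_\ell$ is analytic with values in $\mathcal L(H^{s-1},H^s)$ by differentiating the outer composition against the band-limited range of $\chi(D)$. That controls only the outer $R_\varphi$; the operator you actually need is $R_\varphi T_\ell R_{\varphi^{-1}}$, and the inner map $(\varphi,h)\mapsto h\circ\varphi^{-1}$ is merely continuous, so analyticity of $R_\varphi T_\ell$ alone does not give analyticity of the conjugate. The fix is the same change of variables $y'=\varphi(z)$ you already use for the high-frequency kernel, which absorbs the inner composition into the Jacobian factor $\det d\varphi(z)$ and the kernel argument $\varphi(x)-\varphi(z)$ (and is in fact easier here, since the relevant kernel is entire of exponential type). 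With that correction your argument closes and coincides in substance with the paper's proof together with the conjugation lemmas of \cite{lagrangian}.
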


For basic concepts of real analyticity in Hilbert spaces see \cite{lagrangian}.

\begin{proof}[Proof of Lemma \ref{f_regularity}]
	From \cite{lagrangian} we know that for $1 \leq s' \leq s$ and $j=1,\ldots,2n$
	\[
		\Ds^s(\R^{2n}) \times H^{s'}(\R^{2n}) \to H^{s'-1}(\R^{2n}),\;(\varphi,f) \mapsto [\partial_j(f \circ \varphi^{-1})] \circ \varphi,
	\]
	is analytic. The reason is essentially that after expanding $[\partial_j(f \circ \varphi^{-1})] \circ \varphi$ we get rid of the composition. Again in \cite{lagrangian} it was shown that 
\[
	\Ds^s(\R^{2n}) \times H^{s-2}(\R^{2n}) \to H^s(\R^{2n}),\;(\varphi,f) \mapsto \left(\Delta^{-1}(1-\chi(D))(f \circ \varphi^{-1})\right) \circ \varphi,
\]
	is analytic. The expression $P^\ast P_H(u)$ is a linear combination of expressions of the form $\partial_i (\partial_j u_p \cdot \partial_k u_q), 1 \leq i,j,k,p,q \leq 2n$. The map
	\begin{align*}
		\D^s(\R^{2n}) \times H^s(\R^{2n}) \times H^s(\R^{2n}) &\to H^{s-2}(\R^{2n}),\\
		(\varphi,f,g) &\mapsto (\partial_i (\partial_j (f \circ \varphi^{-1}) \cdot \partial_k (g \circ \varphi^{-1}))) \circ \varphi
	\end{align*}
is analytic. One can see this by writing
	\begin{align*}
		&(\partial_i (\partial_j (f \circ \varphi^{-1}) \cdot \partial_k (g \circ \varphi^{-1}))) \circ \varphi\\
		&=\partial_i [(\partial_j (f \circ \varphi^{-1}) \circ \varphi \cdot \partial_k (g \circ \varphi^{-1}) \circ \varphi) \circ \varphi^{-1}] \circ \varphi.
	\end{align*}
Thus we get that
	\begin{align*}
		&\Ds^s(\R^{2n}) \times H^s(\R^{2n};\R^{2n}) \to H^s(\R^{2n};\R^{2n}),\\
		&(\varphi,v) \mapsto \left(\Delta^{-1}(1-\chi(D)) P^\ast P_H(v \circ \varphi^{-1})\right) \circ \varphi,
	\end{align*}
	is analytic. The term $\Delta^{-1}\chi(D)P^\ast P_L(u)$ is a linear combination of expressions of the form $\partial_i \partial_j\chi(D) (\partial_k u_p \cdot u_q),1\leq i,j,k,p,q \leq 2n$. In \cite{lagrangian} it was shown that
\[
	\Ds^s(\R^{2n}) \times H^{s-1}(\R^{2n}) \to H^s(\R^{2n}),\;(\varphi,f) \mapsto (\partial_i \partial_j \Delta^{-1} \chi(D) (f \circ \varphi^{-1})) \circ \varphi,
\]
is analytic. Thus we see that	
\begin{align*}
		&\Ds^s(\R^{2n}) \times H^s(\R^{2n};\R^{2n}) \to H^s(\R^{2n};\R^{2n}),\\
		&(\varphi,v) \mapsto \left(\Delta^{-1}\chi(D) P^\ast P_L(v \circ \varphi^{-1})\right) \circ \varphi,
\end{align*}
is analytic. Putting the two parts together we see that $F$ is analytic.
\end{proof}

With this additional regularity for $F$ we can consider the following analytic second order ODE on $\Ds^s(\R^{2n})$
\begin{equation}\label{second_order_ode}
	\varphi_{tt}=F(\varphi,\varphi_t),\;\varphi(0)=\text{id},\;\varphi_t(0)=u_0 \in H^s(\R^{2n};\R^{2n}).
\end{equation}
One can convert \eqref{second_order_ode} to a system of first order ODEs on the tangent bundle $T\Ds^s(\R^{2n})=\Ds^s(\R^{2n}) \times H^s(\R^{2n};\R^{2n})$
\begin{equation}\label{first_order_ode}
	\frac{d}{dt} \left(\begin{array}{c} \varphi \\ v \end{array}\right)=\left(\begin{array}{c} v \\ F(\varphi,v) \end{array}\right),\;\left(\begin{array}{c} \varphi(0)\\ v(0) \end{array}\right)=\left(\begin{array}{c} \text{id} \\ u_0 \end{array}\right)
\end{equation}
Note that $F(\varphi,v)$ is quadratic in $v$. In Differential Geometry people call the vector field on $T\Ds^s(\R^{2n})$ appearing on the right hand side of \eqref{first_order_ode} a spray and the equation \eqref{second_order_ode} a geodesic equation -- see \cite{ebin_marsden,ebin}. Due to the regularity of $F$ we can apply the Picard-Lindel\"of Theorem to get local solutions to \eqref{first_order_ode} and with that local solutions to \eqref{symplectic_euler_extended}. More precisely

\begin{Lemma}\label{local_solutions}
	Let $n \geq 1$, $s > 2n/2+1$ and $T > 0$. Suppose $(\varphi,v)$ solves \eqref{first_order_ode} on $[0,T]$. Then $u(t):=v(t) \circ \varphi(t)^{-1},0 \leq t \leq T$, solves \eqref{symplectic_euler_extended}. On the other hand let $u \in C([0,T];H^s(\R^{2n};\R^{2n}))$ be a solution to \eqref{symplectic_euler_extended} and $\varphi \in C^1([0,T];\Ds^s(\R^{2n}))$ its flow map as in \eqref{flow_map}. Then $(\varphi,v):=(\varphi,\varphi_t)$ is a solution to \eqref{first_order_ode} on $[0,T]$.
\end{Lemma}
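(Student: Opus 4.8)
The statement is the standard correspondence between the Eulerian equation \eqref{symplectic_euler_extended} and the Lagrangian ODE \eqref{first_order_ode}, and the plan is to verify the two implications separately. The only real work is to track Sobolev regularity through the composition calculus of \cite{composition,lagrangian}: composing an $H^s$ vector field with an $H^s$ diffeomorphism is differentiable in time only at the cost of one derivative, so the evolution equation will first be obtained in $H^{s-1}$ and then bootstrapped back to the right space.

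\emph{From \eqref{first_order_ode} to \eqref{symplectic_euler_extended}.} Let $(\varphi,v)\in C^1([0,T];\Ds^s(\R^{2n})\times H^s(\R^{2n};\R^{2n}))$ solve \eqref{first_order_ode} and set $u(t):=v(t)\circ\varphi(t)^{-1}$. By continuity of inversion on $\Ds^s(\R^{2n})$ and of the composition $H^s\times\Ds^s\to H^s$ one gets $u\in C([0,T];H^s(\R^{2n};\R^{2n}))$ with $u(0)=v(0)=u_0$. Since $\varphi_t=v=u\circ\varphi$, the curve $\varphi$ is the flow map of $u$ in the sense of \eqref{flow_map}. Differentiating $\varphi\circ\varphi^{-1}=\text{id}$ in $t$ and inverting the Jacobian (as in \cite{lagrangian}) yields $\partial_t(\varphi^{-1})=-\big(D(\varphi^{-1})\big)u=-(u\cdot\nabla)\varphi^{-1}$, so $t\mapsto\varphi(t)^{-1}-\text{id}$ lies in $C^1([0,T];H^{s-1})$; consequently $u=v\circ\varphi^{-1}\in C^1([0,T];H^{s-1})$ with
\[
\partial_t u=(v_t)\circ\varphi^{-1}+\big(Dv\circ\varphi^{-1}\big)\,\partial_t(\varphi^{-1}).
\]
Here $v_t=F(\varphi,v)=B(v\circ\varphi^{-1})\circ\varphi=B(u)\circ\varphi$, so the first summand is $B(u)$; and using $Du=\big(Dv\circ\varphi^{-1}\big)\big(D\varphi\circ\varphi^{-1}\big)^{-1}$ together with $\big(D\varphi\circ\varphi^{-1}\big)^{-1}=D(\varphi^{-1})$, a one-line matrix computation gives that the second summand equals $-(u\cdot\nabla)u$. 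Hence $\partial_t u=B(u)-(u\cdot\nabla)u$ in $H^{s-1}$; since the right-hand side is continuous into $H^{s-1}$, integration in $t$ gives $u(t)=u_0+\int_0^t\big(B(u(\tau))-(u(\tau)\cdot\nabla)u(\tau)\big)\,d\tau$, i.e. $u$ solves \eqref{symplectic_euler_extended}.

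\emph{From \eqref{symplectic_euler_extended} to \eqref{first_order_ode}.} Let $u\in C([0,T];H^s(\R^{2n};\R^{2n}))$ solve \eqref{symplectic_euler_extended} with flow map $\varphi\in C^1([0,T];\Ds^s(\R^{2n}))$ as in \eqref{flow_map}, and put $v:=\varphi_t=u\circ\varphi\in C([0,T];H^s(\R^{2n};\R^{2n}))$. From the equation, $u_t=B(u)-(u\cdot\nabla)u\in C([0,T];H^{s-1})$, so $u\in C^1([0,T];H^{s-1})$, and the time chain rule of \cite{lagrangian} together with $\varphi_t=u\circ\varphi$ gives
\[
v_t=\partial_t(u\circ\varphi)=u_t\circ\varphi+\big(Du\circ\varphi\big)\varphi_t=\big(u_t+(u\cdot\nabla)u\big)\circ\varphi=B(u)\circ\varphi.
\]
But $B(u)\circ\varphi=B(v\circ\varphi^{-1})\circ\varphi=F(\varphi,v)$, which is continuous into $H^s$ because $B(u)\in C([0,T];H^s)$ and $\varphi\in C([0,T];\Ds^s)$; hence $v\in C^1([0,T];H^s)$ with $v_t=F(\varphi,v)$. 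Combined with $\varphi_t=v$, $\varphi(0)=\text{id}$ and $v(0)=\varphi_t(0)=u_0\circ\text{id}=u_0$, this is exactly \eqref{first_order_ode} on $[0,T]$.

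The step I expect to need the most care is the regularity accounting just used: a priori the curves $u\circ\varphi$ and $v\circ\varphi^{-1}$ are $C^1$ in time only with values in $H^{s-1}$, so one must derive the evolution equation first at the $H^{s-1}$ level and then observe, from the explicit form of $B$ and $F$ (which take values in $H^s$), that the time derivative in fact lives in $H^s$; this is what allows one to conclude that $u$, respectively $v$, has the regularity required of a genuine solution. All composition-calculus ingredients used above — joint continuity of $(f,\psi)\mapsto f\circ\psi$, the differentiation rule for $\psi\mapsto\psi^{-1}$, and the chain rule for $t\mapsto f(t)\circ\psi(t)$ — are quoted from \cite{composition} and \cite{lagrangian}.
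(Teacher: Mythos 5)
Your proof is correct and follows essentially the same route as the paper: both directions are obtained by differentiating the flow relation $\varphi_t=u\circ\varphi$, identifying the result with $F(\varphi,v)$ resp.\ $B(u)-(u\cdot\nabla)u$, and then integrating in time to upgrade the identity to the correct Sobolev space. The only difference is presentational: the paper performs the differentiation pointwise in $x$ (via the imbedding $H^s\hookrightarrow C^1_0$) before integrating, whereas you carry it out directly as a chain rule for $H^{s-1}$-valued curves; both rest on the same composition calculus from the cited references.
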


\begin{proof}
	Suppose that $(\varphi,v) \in C^1([0,T];\Ds^s(\R^{2n}) \times H^s(\R^{2n};\R^{2n}))$ is a solution to \eqref{first_order_ode}. We define $u(t)=v(t) \circ \varphi(t)^{-1},0 \leq t \leq T$. From the regularity of the composition we know that $u \in C([0,T];H^s(\R^{2n};\R^{2n})$. As a composition of $C^1$ maps we have that $u \in C^1([0,T] \times \R^{2n};\R^{2n})$. Thus we get pointwise for $x \in \R^{2n}$ by taking the $t$ derivative in $\varphi_t=u \circ \varphi$
\[
	\varphi_{tt}(t)=(u_t(t)+(u(t) \cdot \nabla)u(t)) \circ \varphi(t) = F(\varphi(t),v(t)),\; 0 \leq t \leq T.
\]
So we have pointwise
	\begin{align*}
		u_t(t) &= F(\varphi(t),v(t)) \circ \varphi(t)^{-1}-(u(t) \cdot \nabla) u(t)\\
		&=B(u(t))-(u(t) \cdot \nabla) u(t),\; 0 \leq t \leq T.
	\end{align*}
Integrating gives pointwise
	\[
		u(t)=\int_0^t \left(B(u(s))-(u(s) \cdot \nabla)u(s)\right) \;ds,\;0 \leq t \leq T.
	\]
Since the integrand is in $C([0,T];H^{s-1}(\R^{2n};\R^{2n}))$ the above is an identity in $H^{s-1}$. Hence $u$ is a solution to \eqref{symplectic_euler_extended}.\\
	Suppose now that $u \in C([0,T];H^s(\R^{2n};\R^{2n}))$ is a solution to \eqref{symplectic_euler_extended} and let $\varphi \in C^1([0,T];\Ds^s(\R^{2n}))$ be its flow map as in \eqref{flow_map}. Note that $u \in C^1([0,T] \times \R^{2n};\R^{2n})$. Taking pointwise the $t$ derivative in $\varphi_t(t)=u(t) \circ \varphi(t)$ gives
	\begin{align*}
		\varphi_{tt}(t)&=(u_t(t) + (u(t) \cdot \nabla)u(t)) \circ \varphi(t)\\
		&=B(u(t)) \circ \varphi(t)=F(\varphi(t),\varphi_t(t)),\;0 \leq t \leq T.
	\end{align*}
Integrating gives pointwise
	\[
		\varphi_t(t)=\int_0^t F(\varphi(s),\varphi_t(s)) \;ds,0 \leq t \leq T.
	\]
Since the integrand is in $C([0,T];H^s(\R^{2n};\R^{2n}))$ we get
	\[
		\varphi_{tt}(t)=F(\varphi(t),\varphi_t(t)),\;0 \leq t \leq T,
	\]
as an identity in $H^s$. Hence $(\varphi,\varphi_t)$ solves \eqref{first_order_ode} on $[0,T]$. This finishes the proof.
\end{proof}

As a corollary of Lemma \ref{local_solutions} we get the local well-posedness of \eqref{symplectic_euler_extended}.

\begin{Coro}\label{coro_lwp}
Let $n \geq 1$ and $s > 2n/2+1$. Then \eqref{symplectic_euler_extended} is locally well-posed in the sense of Hadamard.
\end{Coro}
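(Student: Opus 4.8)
The plan is to deduce the statement from the analytic ODE formulation \eqref{first_order_ode} via the dictionary of Lemma \ref{local_solutions}. First I would note that by Lemma \ref{f_regularity} the right hand side of \eqref{first_order_ode}, i.e.\ the map $T\Ds^s(\R^{2n}) \to H^s(\R^{2n};\R^{2n}) \times H^s(\R^{2n};\R^{2n})$, $(\varphi,v) \mapsto (v,F(\varphi,v))$, is real analytic on the Hilbert manifold $T\Ds^s(\R^{2n}) = \Ds^s(\R^{2n}) \times H^s(\R^{2n};\R^{2n})$; in particular it is $C^1$ and locally Lipschitz. Fixing $u_0 \in H^s(\R^{2n};\R^{2n})$ and applying the Picard--Lindel\"of theorem in Hilbert spaces at the point $(\text{id},u_0)$ yields an open neighborhood $\U$ of $(\text{id},u_0)$ in $T\Ds^s(\R^{2n})$, a time $T = T(u_0) > 0$, and for each $(\psi,w) \in \U$ a unique solution $(\varphi,v) \in C^1([0,T];T\Ds^s(\R^{2n}))$ of \eqref{first_order_ode} with $(\varphi(0),v(0)) = (\psi,w)$, depending continuously (in fact analytically) on $(\psi,w)$, uniformly on $[0,T]$. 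Here one uses that $\Ds^s(\R^{2n}) - \text{id}$ is open in $H^s(\R^{2n};\R^{2n})$, so that a short-time solution automatically stays inside the manifold.

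Next I would transfer this to the Eulerian side. By the first half of Lemma \ref{local_solutions}, $u(t) := v(t) \circ \varphi(t)^{-1}$ solves \eqref{symplectic_euler_extended} on $[0,T]$ with $u(0) = u_0$, giving existence. For uniqueness, if $u_1,u_2 \in C([0,T];H^s(\R^{2n};\R^{2n}))$ both solve \eqref{symplectic_euler_extended} with the same initial value, then by the second half of Lemma \ref{local_solutions} their flow maps $\varphi_1,\varphi_2$ produce solutions $(\varphi_i,(\varphi_i)_t)$ of \eqref{first_order_ode} with the common initial point $(\text{id},u_0)$; uniqueness for the ODE forces $\varphi_1 = \varphi_2$ and $(\varphi_1)_t = (\varphi_2)_t$, hence $u_1 = (\varphi_1)_t \circ \varphi_1^{-1} = (\varphi_2)_t \circ \varphi_2^{-1} = u_2$ (shrinking $T$ if necessary so the reference solution's flow map stays within the range where Picard--Lindel\"of applies).

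For continuous dependence on the data, suppose $u_0^{(k)} \to u_0$ in $H^s(\R^{2n};\R^{2n})$. Then $(\text{id},u_0^{(k)}) \to (\text{id},u_0)$ in $T\Ds^s(\R^{2n})$, so eventually these points lie in $\U$, and by the continuous dependence in the Picard--Lindel\"of theorem the corresponding solutions satisfy $(\varphi^{(k)},v^{(k)}) \to (\varphi,v)$ in $C([0,T];T\Ds^s(\R^{2n}))$. Using the continuity of the composition map $H^s(\R^{2n};\R^{2n}) \times \Ds^s(\R^{2n}) \to H^s(\R^{2n};\R^{2n})$, $(f,\psi) \mapsto f \circ \psi$, and of the inversion $\Ds^s(\R^{2n}) \to \Ds^s(\R^{2n})$, $\psi \mapsto \psi^{-1}$, both from \cite{composition}, one obtains $u^{(k)}(t) = v^{(k)}(t) \circ (\varphi^{(k)}(t))^{-1} \to v(t) \circ \varphi(t)^{-1} = u(t)$ uniformly in $t \in [0,T]$, i.e.\ $u^{(k)} \to u$ in $C([0,T];H^s(\R^{2n};\R^{2n}))$. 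This establishes Hadamard local well-posedness.

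The only slightly delicate points are the uniformity of the existence time on a neighborhood of $(\text{id},u_0)$ --- standard from the quantitative Picard--Lindel\"of estimates once the vector field is locally Lipschitz --- and the bookkeeping to match uniqueness on the Eulerian side with uniqueness for the ODE, which is exactly where Lemma \ref{local_solutions} does the work. I do not expect a real obstacle; note that the argument only yields \emph{continuity} of $u_0 \mapsto u$, not Lipschitz continuity, precisely because the inversion and right composition on $\Ds^s(\R^{2n})$ are merely continuous --- and Theorem \ref{th_nonuniform} will show this cannot be improved.
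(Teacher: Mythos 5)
Your proposal is correct and follows exactly the paper's own route: the paper's proof likewise derives existence and uniqueness from the Picard--Lindel\"of theorem applied to the analytic ODE \eqref{first_order_ode} together with the dictionary of Lemma \ref{local_solutions}, and obtains continuous dependence from the ODE side combined with the continuity of composition and inversion on $\Ds^s(\R^{2n})$. Your write-up merely supplies more detail (uniform existence time, the bookkeeping for uniqueness) than the paper's two-sentence argument.
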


\begin{proof}
	Existence and Uniqueness of solutions to \eqref{symplectic_euler_extended} follow from Lemma \ref{local_solutions} and the corresponding statement for ODEs applied to \eqref{first_order_ode}. Continuous dependence on the initial value $u_0 \in H^s(\R^{2n};\R^{2n}) \mapsto u(T)=v(T) \circ \varphi(T)^{-1}$ follows from the corresponding statement for ODEs and the continuity of the composition.
\end{proof}

As a consequence we get that \eqref{symplectic_euler} is locally well-posed in $H^s(\R^{2n})$ for $n \geq 1$ and $s > 2n/2+1$. Another consequence is that \eqref{second_order_ode} is a Lagrangian formulation of \eqref{symplectic_euler}. In other words we get a proof of Theorem \ref{th_lagrangian_formulation}. 

\begin{proof}[Proof of \ref{th_lagrangian_formulation}]
	The proof follows from Lemma \ref{lemma_old_to_new}, Lemma \ref{lemma_new_to_old}, Lemma \ref{local_solutions} and Corollary \ref{coro_lwp}.
\end{proof}

\section{Global well-posedness} \label{section_gwp}

Throughout this section we assume $n \geq 1$ and $s > 2n/2+1$. The goal of this section is to prove that \eqref{symplectic_euler} is globally well-posed in $H^s(\R^{2n})$. Before doing that we will do some prelimary considerations. We denote by $(\varphi(t;u_0),v(t;u_0))$ the solution to \eqref{first_order_ode} with initial value $(\text{id},u_0)$ on its maximal time of existence $0 \leq t < T^\ast(u_0)$. Here we denote by $T^\ast(u_0) \in (0,\infty]$ the life span of the solution to \eqref{first_order_ode} starting at $(\text{id},u_0)$. Note that due to the quadratic nature of $F(\varphi,v)$ in $v$ the system \eqref{first_order_ode} enjoys the following scaling: Let $(\varphi,v)$ be a solution to \eqref{first_order_ode} on $[0,T]$ for some $T > 0$. For $\lambda > 0$ consider the scaled quantities
\begin{equation}\label{scaling}
	\varphi^\lambda(t)=\varphi(\lambda t),\;v^\lambda(t)=\lambda v(\lambda t),\;0 \leq t \leq T/\lambda.
\end{equation}
Then $(\varphi^\lambda,v^\lambda)$ solves \eqref{first_order_ode} on $[0,T/\lambda]$ with initial values $(\varphi^\lambda(0),v^\lambda(0))=(\text{id},\lambda u_0)$. In other words we have $T^\ast(\lambda u_0)=\frac{1}{\lambda} T^\ast(u_0)$ and $\varphi(t;\lambda u_0)=\varphi(\lambda t;u_0)$ for $0 \leq t < \frac{1}{\lambda} T^\ast(u_0)$.\\
We define $U \subset H^s(\R^{2n};\R^{2n})$ to be the set of initial values $u_0$ for which \eqref{first_order_ode} has a solution beyond time $T=1$. Note that $U$ is an open neighborhood of $0 \in H^s(\R^{2n};\R^{2n})$ and star shaped w.r.t. $0$. We prove the following blow up criterion.

\begin{Lemma}\label{blowup_hs}
Let $u_0 \in H^s_\omega(\R^{2n};\R^{2n})$ and $u$ the solution to \eqref{symplectic_euler} on its maximal time of existence $[0,T^\ast(u_0))$. If $T^\ast(u_0) < \infty$ then $\lim_{t \uparrow T^\ast(u_0)} \|u(t)\|_{H^s}=\infty$.
\end{Lemma}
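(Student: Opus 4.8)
The plan is to prove the contrapositive: if $T^\ast(u_0) < \infty$ but $\limsup_{t \uparrow T^\ast(u_0)} \|u(t)\|_{H^s} = M < \infty$, then the solution extends past $T^\ast(u_0)$, contradicting maximality. (Since $u$ is continuous into $H^s$ on $[0,T^\ast)$, the $\limsup$ being finite means the norm stays bounded on the whole interval, say $\|u(t)\|_{H^s} \le M$ for $t \in [0,T^\ast(u_0))$.) The key mechanism is the scaling \eqref{scaling} together with the local well-posedness from Corollary \ref{coro_lwp}: because $F$ is analytic (hence locally Lipschitz) and quadratic in $v$, the Picard-Lindel\"of existence time for \eqref{first_order_ode} starting at $(\text{id}, w_0)$ depends only on a neighbourhood of $(\text{id},0)$ and on $\|w_0\|_{H^s}$ — more precisely, there is an $\varepsilon > 0$ and a time $\tau > 0$ such that any initial velocity of $H^s$-norm at most $\varepsilon$ yields a solution of \eqref{first_order_ode} on $[0,\tau]$ with $\varphi(t)$ staying in a fixed small ball around $\text{id}$ in $\Ds^s(\R^{2n})$. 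By the scaling relation $T^\ast(\lambda w_0) = \tfrac{1}{\lambda} T^\ast(w_0)$, an arbitrary initial velocity $w_0$ with $\|w_0\|_{H^s} \le M$ then produces a solution of \eqref{first_order_ode} living for time at least $\delta := \tau \varepsilon / M > 0$, a quantity depending only on $M$.

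With this uniform existence time $\delta$ in hand, I would pick $t_0 \in (T^\ast(u_0) - \delta/2,\, T^\ast(u_0))$ and use $u(t_0)$, which satisfies $\|u(t_0)\|_{H^s} \le M$, as a new initial datum. By the uniform lower bound, the solution to \eqref{symplectic_euler_extended} with datum $u(t_0)$ — equivalently, via Lemma \ref{local_solutions}, the solution to \eqref{first_order_ode} starting at $(\text{id}, u(t_0))$ — exists on $[t_0, t_0 + \delta]$, which reaches strictly past $T^\ast(u_0)$. Concatenating this with the original solution on $[0,t_0]$ (using uniqueness from Corollary \ref{coro_lwp} to guarantee the two agree on the overlap) yields a solution of \eqref{symplectic_euler_extended}, hence by Lemma \ref{lemma_new_to_old} of \eqref{symplectic_euler}, on an interval strictly containing $[0,T^\ast(u_0))$. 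This contradicts the definition of $T^\ast(u_0)$ as the maximal time of existence. Therefore $T^\ast(u_0) < \infty$ forces $\|u(t)\|_{H^s} \to \infty$ as $t \uparrow T^\ast(u_0)$.

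The one point that needs care — and the main obstacle — is that the second component $v$ of the ODE solution is not the quantity being controlled; what is bounded by hypothesis is $u(t) = v(t) \circ \varphi(t)^{-1}$, and the flow map $\varphi(t)$ could in principle degenerate (leave every compact subset of $\Ds^s(\R^{2n})$, e.g.\ $\det d_x\varphi \to 0$ somewhere) even while $\|u(t)\|_{H^s}$ stays bounded. To close this gap I would argue that the restarting procedure above never sees the old flow map at all: at time $t_0$ one resets $\varphi$ to $\text{id}$ and only needs a short-time solution of \eqref{first_order_ode} with the bounded datum $u(t_0)$, for which the uniform existence time $\delta$ already controls how far $\varphi$ moves away from $\text{id}$. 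Thus the potential degeneration of the original $\varphi(t)$ is irrelevant, and boundedness of $\|u(t)\|_{H^s}$ alone suffices to restart. One should also note that since $u_0 \in H^s_\omega$, Lemma \ref{lemma_new_to_old} ensures the extended solution again lies in $H^s_\omega$, so it is genuinely a solution of \eqref{symplectic_euler}, consistent with the statement.
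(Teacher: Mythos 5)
Your proposal is correct and follows essentially the same route as the paper: a uniform existence time $\delta/M$ for all data of $H^s$-norm at most $M$, obtained by applying the scaling \eqref{scaling} to a ball of initial data around $0$ whose solutions persist past time $1$, followed by restarting the solution (with the flow map reset to $\text{id}$) from a time within $\delta/M$ of $T^\ast(u_0)$. The one caveat is that the negation of $\lim_{t \uparrow T^\ast(u_0)} \|u(t)\|_{H^s}=\infty$ only yields a sequence $t_k \uparrow T^\ast(u_0)$ along which the norm is bounded, not boundedness on the whole interval as you assume; your restart argument applies verbatim to a single such $t_k$, which is exactly how the paper phrases it.
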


\begin{proof}
Suppose not. Then there is $M > 0$ and $(t_k)_{k \geq 1}$ with $t_k \uparrow T^\ast(u_0)$ s.t.
	\[
		\|u(t_k)\|_{H^s} \leq M,\;k \geq 1.
	\]
	There is $\delta > 0$ s.t $B_{2\delta}(0) \subset U$, where $U \subset H^s(\R^{2n};\R^{2n})$ is the set of initial values for which the solution to \eqref{symplectic_euler} exists beyond time $T=1$. This means that the solution to \eqref{symplectic_euler} with initial value $w_0 \in H^s_\omega(\R^{2n};\R^{2n})$ satisfying $\|w_0\|_{H^s} \leq \delta$ exists beyond time $T=1$. Thus by scaling we see that the solution to \eqref{symplectic_euler} with an initial value $w_0$ satisfying $\|w_0\|_{H^s} \leq M$ exists beyond time $T=\delta/M$. Now take $k \geq 1$ s.t. $T^\ast(u_0)-t_k < \delta/M$. We can continue the solution from $u(t_k)$ to the interval $[t_k,t_k+\delta/M]$ which is beyond $T^\ast(u_0)$. This is a contradiction. So we have $\lim_{t \uparrow T^\ast(u_0)} \|u(t)\|_{H^s}=\infty$.
\end{proof}

We will refine the above blow up criterion \`a la Beale-Kato-Majda \cite{bkm}. But before doing that we will introduce some notation. For $N > 0$ we define $\chi_N(D)$ to be the Fourier multiplier operator with symbol
\[
	\chi_N(\xi)=\begin{cases} 1,\;|\xi| \leq N,\\0,\;|\xi| > N,\end{cases}
\]
		where $\xi \in \R^{2n}$, i.e. $\chi_N(\xi)$ is the characteristic function of the ball of radius $N > 0$ around $0$ in $\R^{2n}$. We have for $s' \geq 0$
\[
	\chi_N(D):H^{s'}(\R^{2n}) \to H^\infty(\R^{2n})=\cap_{\rho \geq 0} H^\rho(\R^{2n}),
\]
		and $\chi_N(D)f \to f$ in $H^{s'}$ as $N \to \infty$ for $f \in H^{s'}(\R^{2n})$. For $s' \geq 0$ we denote by $J^{s'}$ the Fourier multiplier operator with symbol $(1+|\xi|^2)^{s'/2},\xi \in \R^{2n}$. In particular we have $\|f\|_{H^{s'}}=\|J^{s'}(f)\|_{L^2}$ for $f \in H^{s'}(\R^{2n})$.

\begin{Lemma}\label{blowup_sup}
	Let $u_0 \in H^s_\omega(\R^{2n};\R^{2n})$ and $u$ the solution to \eqref{symplectic_euler} on its maximal time of existence $[0,T^\ast(u_0))$. If $T^\ast(u_0) < \infty$ then $\int_0^{T^\ast(u_0)} \|\nabla u(t)\|_{L^\infty} \;dt=\infty$. Here $\nabla u$ denotes the collection of first order derivatives of $u$, i.e. the Jacobian matrix of $u$.
\end{Lemma}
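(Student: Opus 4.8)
The statement is the Beale–Kato–Majda-type refinement of Lemma \ref{blowup_hs}: if the $H^s$ norm blows up at a finite time $T^\ast(u_0)$, then already $\int_0^{T^\ast} \|\nabla u\|_{L^\infty}\,dt = \infty$. Equivalently, I will prove the contrapositive: if $\int_0^{T^\ast}\|\nabla u(t)\|_{L^\infty}\,dt < \infty$, then $\sup_{t<T^\ast}\|u(t)\|_{H^s} < \infty$, which by Lemma \ref{blowup_hs} forces $T^\ast = \infty$. The plan is to derive a differential inequality of the form
\[
	\frac{d}{dt}\|u(t)\|_{H^s}^2 \leq C\big(1 + \|\nabla u(t)\|_{L^\infty}\big)\|u(t)\|_{H^s}^2
\]
on $[0,T^\ast)$, and then close the argument with Grönwall.

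**Key steps.** First I would compute $\frac{1}{2}\frac{d}{dt}\|u\|_{H^s}^2 = \langle J^s u_t, J^s u\rangle_{L^2}$ and substitute $u_t = B(u) - (u\cdot\nabla)u$ from \eqref{symplectic_euler_new}. The term $\langle J^s B(u), J^s u\rangle_{L^2}$ is harmless: since $B:H^s \to H^s$ is a bounded quadratic form (see \eqref{b_formula}), it is bounded by $C\|u\|_{H^s}^3$, which after dividing by $\|u\|_{H^s}$ contributes $C\|u\|_{H^s}^2$ — actually here I should be slightly careful and note that $B(u)$ is in fact controlled by $\|u\|_{H^s}\|u\|_{W^{1,\infty}}$ or similar, using that the smoothing/Calderón–Zygmund pieces $\Delta^{-1}(1-\chi(D))P^\ast P_H$ and $\Delta^{-1}\chi(D)P^\ast P_L$ lose no derivatives; but even the crude bound $C\|u\|_{H^s}^3$ is not enough if I want a sharp BKM statement, so the cleaner route is to bound $\|B(u)\|_{H^s} \le C\|u\|_{L^\infty}\|u\|_{H^{s+1}}$-type estimate — no: $B$ maps $H^s\to H^s$ boundedly but not better, so I get $\|B(u)\|_{H^s}\le C\|u\|_{H^s}^2$, hence $\langle J^sB(u),J^su\rangle \le C\|u\|_{H^s}^3$. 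This is acceptable for a blow-up criterion provided I combine it with the main transport estimate, because on a bounded time interval the contribution $\int_0^{T^\ast}\|u\|_{H^s}\,dt$ can be absorbed only if $\|u\|_{H^s}$ is already bounded — this is circular, so I will instead want the finer estimate $\|B(u)\|_{H^s} \le C\|\nabla u\|_{L^\infty}\|u\|_{H^s}$. This finer bound should hold because $P^\ast P_H(u)$ is a sum of products $\partial_i(\partial_j u_p\cdot\partial_k u_q)$ and the operator $\Delta^{-1}(1-\chi(D))\partial_i$ gains one derivative, so $\|B(u)\|_{H^s}$ is controlled by $\|\nabla u \cdot \nabla u\|_{H^{s-1}}$, which by the Moser/Kato–Ponce commutator-type product estimate is $\le C(\|\nabla u\|_{L^\infty}\|\nabla u\|_{H^{s-1}} + \|\nabla u\|_{L^\infty}\|\nabla u\|_{H^{s-1}}) \le C\|\nabla u\|_{L^\infty}\|u\|_{H^s}$; the $P_L$ piece is even smoother. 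Second, the transport term $-\langle J^s[(u\cdot\nabla)u], J^s u\rangle_{L^2}$: I write $J^s[(u\cdot\nabla)u] = (u\cdot\nabla)J^s u + [J^s, u\cdot\nabla]u$. For the first, $\langle (u\cdot\nabla)J^su, J^su\rangle = -\frac12\langle (\operatorname{div}u)J^su, J^su\rangle$, and since $u$ is divergence free (it solves \eqref{symplectic_euler}, so $u\in H^s_\omega$ and symplectic implies volume-preserving, as noted at the start of Section \ref{section_eulerian}), this term vanishes outright. For the commutator, I invoke the Kato–Ponce commutator estimate $\|[J^s, u\cdot\nabla]u\|_{L^2} \le C(\|\nabla u\|_{L^\infty}\|u\|_{H^s} + \|\nabla u\|_{H^{s-1}}\|\nabla u\|_{L^\infty}) \le C\|\nabla u\|_{L^\infty}\|u\|_{H^s}$, and Cauchy–Schwarz finishes it.

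**The main obstacle.** The genuinely delicate point is the commutator (Kato–Ponce) estimate, together with ensuring that the $B(u)$ contribution really is controlled by $\|\nabla u\|_{L^\infty}\|u\|_{H^s}$ rather than by $\|u\|_{H^s}^2$ — the latter would only reprove Lemma \ref{blowup_hs}, not the sharper $L^1_t W^{1,\infty}_x$ criterion. So the work is to exploit the structure of $B$ from \eqref{b_formula}: every term is a second-order operator of order $\le 0$ (in the $P_H$ case, $\Delta^{-1}\partial_i$ composed with the non-smoothing multiplier $1-\chi(D)$, net order $-1$, applied to a product of two first derivatives) or infinitely smoothing (the $\chi(D)$ low-frequency piece), so in both cases $\|B(u)\|_{H^s}$ reduces to an $H^{s-1}$ (or lower) product estimate on $\nabla u \otimes \nabla u$, and the Moser estimate $\|fg\|_{H^{s-1}} \le C(\|f\|_{L^\infty}\|g\|_{H^{s-1}} + \|f\|_{H^{s-1}}\|g\|_{L^\infty})$ with $f,g \in \{\partial_k u_\ell\}$ gives the desired $\|\nabla u\|_{L^\infty}\|u\|_{H^s}$ bound. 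Once the inequality $\frac{d}{dt}\|u\|_{H^s}^2 \le C\|\nabla u(t)\|_{L^\infty}\|u\|_{H^s}^2$ is in hand, Grönwall gives $\|u(t)\|_{H^s}^2 \le \|u_0\|_{H^s}^2 \exp\big(C\int_0^t\|\nabla u(\tau)\|_{L^\infty}\,d\tau\big)$; if the integral were finite up to $T^\ast < \infty$ the right side stays bounded, contradicting Lemma \ref{blowup_hs}. Hence $\int_0^{T^\ast}\|\nabla u(t)\|_{L^\infty}\,dt = \infty$.
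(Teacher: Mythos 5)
Your overall strategy coincides with the paper's: an $H^s$ energy estimate, the Kato--Ponce commutator bound for the transport term together with the divergence-free cancellation of $\langle (u\cdot\nabla)J^s u, J^s u\rangle_{L^2}$, Gr\"onwall, and then Lemma \ref{blowup_hs}. The one genuine difference is how the term $\langle J^s B(u), J^s u\rangle_{L^2}$ is disposed of. The paper kills it \emph{exactly}: it writes $\langle J^s B(u), J^s u\rangle_{L^2}=\langle B(u), J^{2s}u\rangle_{L^2}$ and observes that $J^{2s}$ commutes with $P$, so $J^{2s}u\in H_\omega$ while $B(u)\in H_\omega^\perp$, hence the pairing vanishes. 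You instead prove the quantitative bound $\|B(u)\|_{H^s}\le C\|\nabla u\|_{L^\infty}\|u\|_{H^s}$ from the explicit structure of $B$ in \eqref{b_formula} (the $P_H$ piece is an order $-1$ operator applied to $\nabla u\otimes\nabla u$, handled by the Moser product estimate in $H^{s-1}$; the $\chi(D)$ piece is infinitely smoothing). Your bound is correct and suffices; it is more work but more robust, since it does not use that $B(u)\perp H_\omega$, and it avoids having to make sense of $J^{2s}u$ for $u$ merely in $H^s$. You correctly identified and rejected the crude bound $\|B(u)\|_{H^s}\le C\|u\|_{H^s}^2$, which would not yield the BKM-type criterion.

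The one point you gloss over, and which the paper handles explicitly, is the justification of the formal energy identity at the regularity $u\in C([0,T];H^s)$: with $u_t=B(u)-(u\cdot\nabla)u$ only in $H^{s-1}$, the pairing $\langle J^s u_t, J^s u\rangle_{L^2}$ and the integration by parts behind $\langle (u\cdot\nabla)J^s u, J^s u\rangle_{L^2}=-\tfrac12\langle (\operatorname{div}u)\,J^s u, J^s u\rangle_{L^2}$ are not a priori legitimate, since $(u\cdot\nabla)J^s u$ lies only in $H^{-1}$ while $J^s u$ lies only in $L^2$. The paper resolves this by mollifying the data, $u_{0,N}=\chi_N(D)u_0$, working with the resulting $H^\infty$ solutions $u_N$ (for which all manipulations are classical), and passing to the limit $N\to\infty$ using Corollary-type persistence of existence times; this regularization is also what makes $J^{2s}u_N$ well defined in its orthogonality argument. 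You should insert this approximation step (or an equivalent commutator-based justification) to make the differential inequality rigorous; once that is done your argument closes exactly as stated.
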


\begin{proof}
	We define $u_{0,N}=\chi_N(D)u_0,N \geq 1$ and the corresponding solution to \eqref{symplectic_euler} on its maximal interval of existence $[0,T^\ast(u_{0,N}))$ by $u_N$. Note that $u_N \in H^\infty_\omega(\R^{2n};\R^{2n})=\cap_{\rho \geq 1} H^\rho_\omega(\R^{2n};\R^{2n})$ since $P$ commutes with $\chi_N(D)$. Consider
	\begin{align*}
		\frac{1}{2} \frac{d}{dt} \langle J^s(u_N),J^s(u_N) \rangle_{L^2} =& \langle J^s(B(u_N)),J^s(u_N) \rangle_{L^2}\\
		&-\langle J^s((u_N \cdot \nabla) u_N ),J^s(u_N) \rangle_{L^2}.
	\end{align*}
	We treat the expressions on the right separately. We have
	\[
		\langle J^s(B(u_N)),J^s(u_N)\rangle_{L^2}=\langle B(u_N),J^{2s}(u_N) \rangle_{L^2}=0.
	\]
The reason is that $P,J^{2s}$ commute and that $u_N \in H_\omega,B(u_N) \in H_\omega^\perp$ holds. For the other expression we have
	\begin{align*}
		\langle J^s((u_N \cdot \nabla) u_N ),J^s(u_N) \rangle_{L^2}=\langle J^s((u_N \cdot \nabla) u_N )-(u_N \cdot \nabla)J^s(u_N),J^s(u_N) \rangle_{L^2},
	\end{align*}
	since $\langle (u_N \cdot \nabla) J^s(u_N),J^s(u_N) \rangle_{L^2}=0$ due to $\operatorname{div} u_N=0$. Using the Kato-Ponce commutator estimate in \cite{kato_ponce} we get
\[
	\|J^s((u_N \cdot \nabla) u_N )-(u_N \cdot \nabla)J^s(u_N)\|_{L^2} \leq C \|\nabla u_N\|_{L^\infty} \|u_N\|_{H^s}.
\]
Thus
	\[
		\frac{1}{2} \frac{d}{dt} \|u_N(t)\|_{H^s}^2 \leq C \|\nabla u_N(t)\|_{L^\infty} \|u_N(t)\|_{H^s}^2,\;0 \leq t < T^\ast(u_{0,N}).
\]
So Gr\"onwall's inequality gives
	\[
		\|u_N(T)\|_{H^s} \leq \|u_{0,N}\|_{H^s} e^{C \int_0^T \|\nabla u_N(t)\|_{L^\infty} \;dt},\; 0 \leq T < T^\ast(u_{0,N}).
	\]
Now let $T < T^\ast(u_0)$. For $N \geq 1$ large enough we have $T^\ast(u_{0,N}) > T$. Thus we get for $N \to \infty$
	\[
		\|u(T)\|_{H^s} \leq \|u_0\|_{H^s} e^{C \int_0^T \|\nabla u(t)\|_{L^\infty} \;dt},\; 0 \leq T < T^\ast(u_0).
	\]
Therefore if $T^\ast(u_0) < \infty$ we get by Lemma \ref{blowup_hs} that $\int_0^{T^\ast(u_0)} \|\nabla u(t)\|_{L^\infty} \;dt = \infty$.
\end{proof}

\begin{Coro}\label{coro_regularity_preserving}
	Let $s' > s$ and $u_0 \in H^{s'}_\omega(\R^{2n};\R^{2n})$. Then the solution to \eqref{symplectic_euler} in $H^{s'}$ coincides with the one in $H^s$. In particular there is neither a loss nor a gain of regularity. 
\end{Coro}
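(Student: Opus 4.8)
The plan is to compare the maximal solution issued from a datum $u_0 \in H^{s'}_\omega(\R^{2n};\R^{2n})$ at the two regularity levels $s$ and $s'$ and to show that they are literally the same curve with the same life span; the ``neither loss nor gain'' assertion is then just a reformulation. Since $s' > s > 2n/2 + 1$, everything in Sections \ref{section_eulerian}--\ref{section_gwp} applies verbatim with $s$ replaced by $s'$: Corollary \ref{coro_lwp}, together with the equivalence of \eqref{symplectic_euler} and \eqref{symplectic_euler_new} established via Lemmas \ref{lemma_old_to_new}--\ref{lemma_new_to_old}, produces a unique maximal solution $u^{(s')} \in C\big([0,T^\ast_{s'}(u_0));H^{s'}_\omega(\R^{2n};\R^{2n})\big)$ to \eqref{symplectic_euler}, and --- since $u_0 \in H^s_\omega(\R^{2n};\R^{2n})$ as well --- a unique maximal solution $u^{(s)} \in C\big([0,T^\ast_{s}(u_0));H^{s}_\omega(\R^{2n};\R^{2n})\big)$. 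Throughout I would work with the unconstrained equation \eqref{symplectic_euler_new}, $u_t + (u \cdot \nabla)u = B(u)$, rather than with \eqref{symplectic_euler}: the quadratic form $B$ of \eqref{b_formula} is an explicit Fourier-multiplier expression which is bounded on $H^\sigma(\R^{2n};\R^{2n})$ for \emph{every} $\sigma > 2n/2 + 1$ and which is literally the same operator on $H^{s'}$ and on $H^s$. This bypasses any need to compare the two a priori different orthogonal complements $(H^{\sigma-1}_\omega)^\perp$ and makes the uniqueness statement of Corollary \ref{coro_lwp} directly applicable.

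\emph{First step (coincidence on the common interval).} Because $H^{s'}(\R^{2n};\R^{2n}) \hookrightarrow H^{s}(\R^{2n};\R^{2n})$, the curve $u^{(s')}$ is in particular an $H^s$-valued solution of \eqref{symplectic_euler_new} with initial value $u_0$ (the nonlinear term lies in $H^{s'-1} \subset H^{s-1}$). By the uniqueness half of Corollary \ref{coro_lwp} it agrees with $u^{(s)}$ on $[0,\min(T^\ast_{s'}(u_0),T^\ast_{s}(u_0)))$. Since $u^{(s')}$, viewed in $H^s$, is then an $H^s$-solution defined on all of $[0,T^\ast_{s'}(u_0))$, it must be a restriction of the maximal $H^s$-solution, so $T^\ast_{s'}(u_0) \le T^\ast_{s}(u_0)$.

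\emph{Second step (the reverse inequality --- the crux).} Suppose for contradiction that $T^\ast_{s'}(u_0) < T^\ast_{s}(u_0)$; in particular $T^\ast_{s'}(u_0) < \infty$. Writing $u := u^{(s)}$, continuity of $u$ on $[0,T^\ast_{s}(u_0))$ gives $\sup_{t \in [0,T^\ast_{s'}(u_0)]} \|u(t)\|_{H^s} < \infty$, hence by the Sobolev imbedding $H^s \hookrightarrow C^1_0(\R^{2n})$ also $\sup_{t \in [0,T^\ast_{s'}(u_0)]} \|\nabla u(t)\|_{L^\infty} < \infty$, and therefore $\int_0^{T^\ast_{s'}(u_0)} \|\nabla u(t)\|_{L^\infty}\,dt < \infty$. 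But by the first step $u = u^{(s')}$ on $[0,T^\ast_{s'}(u_0))$, so this integral equals $\int_0^{T^\ast_{s'}(u_0)} \|\nabla u^{(s')}(t)\|_{L^\infty}\,dt$, which by Lemma \ref{blowup_sup} applied at the regularity level $s'$ (legitimate since $s' > 2n/2 + 1$ and $T^\ast_{s'}(u_0) < \infty$) must equal $+\infty$ --- a contradiction. Hence $T^\ast_{s'}(u_0) = T^\ast_{s}(u_0)$ and $u^{(s')} = u^{(s)}$ throughout $[0,T^\ast_{s}(u_0))$; in particular the $H^s$-solution and the $H^{s'}$-solution coincide, with neither a loss nor a gain of regularity.

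I expect the only genuinely substantive point to be this second step, and even there the work is minimal once the $\|\nabla u\|_{L^\infty}$-blow-up criterion is available at the finer scale --- which it is, being merely Lemma \ref{blowup_sup} with $s$ relabelled to $s'$. I would add one remark to forestall a circularity worry: the proof of Lemma \ref{blowup_sup} does not invoke Corollary \ref{coro_regularity_preserving}, so neither does its $s'$-analogue. The remaining ingredients --- identifying the solution concepts across the two scales and the (trivial) first lifespan inequality --- are routine once one commits to working with the unconstrained form \eqref{symplectic_euler_new}.
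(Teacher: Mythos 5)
Your argument for the main claim is essentially identical to the paper's: assume $T^\ast_{s'}(u_0) < T^\ast_s(u_0)$, note that $\|u(t)\|_{H^s}$ and hence $\|\nabla u(t)\|_{L^\infty}$ stays bounded up to $T^\ast_{s'}(u_0)$, and contradict the Beale--Kato--Majda criterion of Lemma \ref{blowup_sup} applied at level $s'$; your first step (that the $H^{s'}$-solution is in particular an $H^s$-solution, giving $T^\ast_{s'} \le T^\ast_s$) is stated only implicitly in the paper, so spelling it out is fine. The one point you gloss over is the ``no gain of regularity'' half: it is \emph{not} just a reformulation of the coincidence statement, since that statement only concerns data already in $H^{s'}$ and says nothing about a solution with $u_0 \in H^s_\omega \smallsetminus H^{s'}_\omega$ acquiring $H^{s'}$-regularity at some later time $T$. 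The paper closes this by a separate (short) argument: if $u(T) \in H^{s'}_\omega$, solve \eqref{symplectic_euler} backwards from time $T$ and apply the no-loss result to the backward flow to conclude $u_0 \in H^{s'}_\omega$, a contradiction. You would need to add that time-reversal step to justify the ``nor a gain'' clause.
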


\begin{proof}
	Let $u$ be the solution to \eqref{symplectic_euler} in $H^s$ on its maximal time of existence $[0,T_s^\ast(u_0))$. Suppose the solution to \eqref{symplectic_euler} in $H^{s'}$ exists up to a time $T_{s'}^\ast(u_0) < T_s^\ast(u_0)$. Since $\|u(t)\|_{H^s}$ is bounded on $[0,T_{s'}^\ast(u_0)]$ we get by the Sobolev Imbedding Theorem that $\|\nabla u(t)\|_{L^\infty}$ is bounded on $[0,T_{s'}^\ast(u_0)]$, which contradicts Lemma \ref{blowup_sup}. Hence $T_{s'}^\ast(u_0)=T_s^\ast(u_0)$ and the solutions coincide. Suppose we start a solution with $u_0 \in H^s_\omega(\R^{2n};\R^{2n})$ and at some time $T > 0$ we have $u(T) \in H^{s'}_\omega(\R^{2n};\R^{2n})$. Solving \eqref{symplectic_euler} backwards shows that $u_0 \in H^{s'}_\omega(\R^{2n};\R^{2n})$. So the regularity class of a solution to \eqref{symplectic_euler} is preserved.
\end{proof}

To show global well-posedness we will use that the symplectic divergence of $u$ is ``frozen'' into the flow of \eqref{symplectic_euler} similar to the vorticity as in the case of the Euler equations \eqref{euler} -- see \cite{lagrangian}. That the symplectic divergence of $u$ has this property follows from the symmetries of \eqref{first_order_ode} and Noether's Theorem as we will see in the next section. In this section we will verify this directly. We denote by $\nabla_\omega=(\partial_2,-\partial_1,\ldots,\partial_{2n},\partial_{2n-1})$ the symplectic nabla operator, for $H:\R^{2n} \to \R$ we denote its symplectic gradient by $\nabla_\omega H=(\partial_2 H,-\partial_1 H,\ldots,\partial_{2n}H,-\partial_{2n-1}H)$ and for $u=(u_1,\ldots,u_{2n}) \in H_\omega(\R^{2n};\R^{2n})$ the symplectic divergence of $u$ is given by $\nabla_\omega \cdot u=\partial_2 u_1 - \partial_1 u_2 + \ldots + \partial_{2n} u_{2n-1} - \partial_{2n-1} u_{2n}$. Note that $\nabla_\omega$ is defined by the requirement 
\[
	dH(X)=(\nabla_\omega H) \cdot \omega \cdot X, 
\]
for all $H:\R^{2n} \to \R$ and $X \in \R^{2n}$ a column vector. Here $dH$ denotes the differential of $H$. Note that $\nabla_\omega H$ is always in $H_\omega$ since $P(\nabla_\omega H)=D\nabla H-(D\nabla H)^\top=0$. This implies for $z \in H_\omega^\perp$
\[
	\langle z,\nabla_\omega H \rangle_{L^2}=-\langle \nabla_\omega \cdot z,H \rangle_{L^2}=0.
\]
Since this holds for all $H:\R^{2n} \to \R$ we get that $\nabla_\omega \cdot z=0$. In particular we have $\nabla_\omega \cdot B(u)=0$. The next lemma tells us that $u \in H_\omega$ is essentially controlled by $\nabla_\omega \cdot u$.

\begin{Lemma}\label{symplectic_divergence}
Let $n \geq 1$ and $s > 2n/2+1$. We have
\[
	u=-(-\Delta)^{-1} \nabla_\omega (\nabla_\omega \cdot u)
\]
for all $u \in H^s_\omega(\R^{2n};\R^{2n})$.
\end{Lemma}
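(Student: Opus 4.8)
The plan is to pass to the Fourier side and reduce the identity to a pointwise statement in linear algebra. First I would note that, although $\nabla_\omega\cdot u$ lies only in $H^{s-1}$ and $\nabla_\omega(\nabla_\omega\cdot u)$ only in $H^{s-2}$, the composite operator $Ru:=-(-\Delta)^{-1}\nabla_\omega(\nabla_\omega\cdot u)$ is a Fourier multiplier whose matrix symbol is homogeneous of degree $0$, so $R$ is bounded on $H^s(\R^{2n};\R^{2n})$ and the right-hand side of the asserted identity is well defined. Setting $\sigma(\xi):=\omega\xi$, one checks directly that $\nabla_\omega$ has Fourier symbol $i\sigma(\xi)$ on scalars and $\nabla_\omega\cdot$ has symbol $i\,\sigma(\xi)\cdot(\cdot)$ on vector fields, whence the matrix symbol of $R$ at $\xi\neq0$ is
\[
	M(\xi)=\frac{\sigma(\xi)\,\sigma(\xi)^\top}{|\xi|^2}=\frac{(\omega\xi)(\omega\xi)^\top}{|\omega\xi|^2},
\]
the last equality because $\omega$ is orthogonal. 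Thus $M(\xi)$ is exactly the orthogonal projection of $\R^{2n}$ onto the line $\R\,\omega\xi$, so $M(\xi)\hat u(\xi)=\hat u(\xi)$ precisely when $\hat u(\xi)\in\R\,\omega\xi$. (Geometrically this is the statement that $u\in H_\omega$ is a Hamiltonian field $\nabla_\omega H$ with $\nabla_\omega\cdot u=\Delta H$, hence $H=\Delta^{-1}(\nabla_\omega\cdot u)$; the Fourier description has the advantage of not requiring any discussion of the regularity of $H$.)

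It therefore remains to show that $u\in H^s_\omega(\R^{2n};\R^{2n})$, i.e. $P(u)=0$, forces $\hat u(\xi)\in\R\,\omega\xi$ for almost every $\xi$. Using \eqref{operator_p} and the fact that $\widehat{D(u^\top)}(\xi)=i\,\hat u(\xi)\,\xi^\top$, one computes
\[
	\widehat{P(u)}(\xi)=i\bigl(\omega^\top\hat u(\xi)\,\xi^\top-\xi\,\hat u(\xi)^\top\omega\bigr),
\]
and since the second summand is $\bigl(\omega^\top\hat u(\xi)\,\xi^\top\bigr)^\top$, the condition $\widehat{P(u)}(\xi)=0$ says exactly that the rank-at-most-one matrix $\omega^\top\hat u(\xi)\,\xi^\top$ is symmetric. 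A rank-at-most-one matrix $ab^\top$ is symmetric if and only if $a$ and $b$ are linearly dependent; applying this with $a=\omega^\top\hat u(\xi)$, $b=\xi$ gives $\omega^\top\hat u(\xi)\in\R\,\xi$, i.e. $\hat u(\xi)\in\R\,\omega\xi$ (again using orthogonality of $\omega$). Combining with the previous paragraph, $M(\xi)\hat u(\xi)=\hat u(\xi)$ for a.e. $\xi$, and taking the inverse Fourier transform yields $Ru=u$, which is the claim.

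The only substantive point is the elementary observation that a rank-one matrix is symmetric exactly when its two generating vectors are collinear; the rest is routine bookkeeping with Fourier multipliers together with the orthogonality of $\omega$ ($\omega^\top\omega=I$, $\omega^{-1}=\omega^\top$). Accordingly I do not anticipate any serious obstacle: the remaining checks are the short matrix computation identifying $M(\xi)$ and the verification that each Fourier multiplier appearing is bounded between the relevant Sobolev spaces, so that every manipulation is a genuine identity in $H^s$.
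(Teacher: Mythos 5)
Your proof is correct, and it is at bottom the same Fourier-multiplier strategy as the paper's: the paper's identity \eqref{R_matrix} is exactly your statement $\widehat{P(u)}(\xi)=0$ rewritten with Riesz operators, and its use of $\sum_k \mathcal R_k^2=-1$ plays the role of your observation that $M(\xi)$ acts as the identity on the line $\omega\xi$. The difference is in the finishing argument: the paper extracts the $(2j-1)$-th row of the Riesz-transform matrix identity and solves componentwise for $u_{2j}$ and $u_{2j-1}$ in terms of $v=-(-\Delta)^{-1/2}\nabla_\omega\cdot u$, whereas you identify the symbol of $-(-\Delta)^{-1}\nabla_\omega(\nabla_\omega\cdot\,)$ as the orthogonal projection onto $\R\,\omega\xi$ and reduce the constraint $P(u)=0$ to the elementary fact that a rank-one matrix $ab^\top$ is symmetric iff $a$ and $b$ are collinear. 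Your packaging is arguably more conceptual (it explains \emph{why} the identity holds: symplectic fields are exactly those whose Fourier modes point along $\omega\xi$), at the cost of one extra linear-algebra lemma; the paper's version is a bare-hands computation. One small imprecision to fix: $\hat u(\xi)$ is complex-valued, so the conclusion should read $\hat u(\xi)\in\mathbb{C}\,\omega\xi$ rather than $\R\,\omega\xi$ (and the degenerate cases $\hat u(\xi)=0$, $\xi=0$ deserve a word); the rank-one argument and the identity $M(\xi)\hat u(\xi)=\hat u(\xi)$ go through verbatim over $\mathbb{C}$ since $M(\xi)$ is a real matrix, so this is cosmetic rather than a gap.
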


\begin{proof}
	We denote by $\mathcal R_j,j=1,\ldots,2n$, the Riesz operators, i.e. $\mathcal R_j=\partial_j (-\Delta)^{-1/2}$ is the Fourier multiplier operator with symbol $-i\xi_j/|\xi|$ for $\xi=(\xi_1,\ldots,\xi_{2n}) \in \R^{2n}$. Note that the Riesz operators are bounded on the Sobolev spaces. Let $u=(u_1,\ldots,u_{2n}) \in H^s_\omega(\R^{2n};\R^{2n})$. We will first prove that there is $v \in H^s(\R^{2n})$ s.t. $u=(\mathcal R_2 v,-\mathcal R_1v,\ldots,\mathcal R_{2n}v,-\mathcal R_{2n-1}v)$. Note that this means $u=(-\Delta)^{-1/2} \nabla_\omega v$. So the only possible Ansatz for $v$ is $v=-(-\Delta)^{-1/2} \nabla_\omega \cdot u=-\mathcal R_2 u_1 + \mathcal R_1 u_2 - \ldots - \mathcal R_{2n} u_{2n-1} + \mathcal R_{2n-1} u_{2n}$. Since $u \in H_\omega^s(\R^{2n};\R^{2n})$, i.e. $P(u)=0$, we have
	\begin{align}\label{R_matrix}
	\omega^\top \cdot \left(\begin{matrix} \mathcal R_1 u_1 & \cdots &\mathcal R_{2n} u_1 \\ \vdots & \ddots & \vdots \\ \mathcal R_1 u_{2n} & \cdots & \mathcal R_{2n} u_{2n} \end{matrix}\right)=\left(\begin{matrix} \mathcal R_1 u_1 & \cdots &\mathcal R_{2n} u_1 \\ \vdots & \ddots & \vdots \\ \mathcal R_1 u_{2n} & \cdots & \mathcal R_{2n} u_{2n} \end{matrix}\right)^\top \cdot \omega.
\end{align}
	For $j=1,\ldots,n$, the $2j-1$-th row in \eqref{R_matrix} is
\[
	(-\mathcal R_1 u_{2j},\ldots,-\mathcal R_{2n} u_{2j})=(-\mathcal R_{2j-1} u_2,\mathcal R_{2j-1} u_1,\ldots,-\mathcal R_{2j-1} u_{2n}, \mathcal R_{2j-1} u_{2n-1}).	
\]
Thus we get for $v= \mathcal R_1 u_2 -\mathcal R_2 u_1 + \ldots + \mathcal R_{2n-1} u_{2n}- \mathcal R_{2n} u_{2n-1} $
	\[
		-\mathcal R_{2j-1}v=-\mathcal R_1^2 u_{2j}-\ldots-\mathcal R_{2n}^2 u_{2j} = u_{2j}.
	\]
	Similarly we get $\mathcal R_{2j}v= u_{2j-1}$ for $j=1,\ldots,n$. Thus $u=(-\Delta)^{-1/2} \nabla_\omega v=-(-\Delta)^{-1} \nabla_\omega (\nabla_\omega \cdot u)$ holds. 
\end{proof}

Let us apply the symplectic divergence to \eqref{symplectic_euler}. We get
\begin{align*}
	\partial_t(\nabla_\omega \cdot u)+\nabla_\omega \cdot [(u \cdot \nabla) u]=0.
\end{align*}
We have
\begin{align*}
	\nabla_\omega \cdot [(u \cdot \nabla) u]=&(u \cdot \nabla) (\nabla_\omega \cdot u)+\sum_{k=1}^{2n} \partial_2 u_k \partial_k u_1 - \sum_{k=1}^{2n} \partial_1 u_k \partial_k u_2 + \ldots \\
	&+ \sum_{k=1}^{2n} \partial_{2n} u_k \partial_k u_{2n-1} - \sum_{k=1}^{2n} \partial_{2n-1} u_k \partial_k u_{2n}\\
	=&(u \cdot \nabla) (\nabla_\omega \cdot u) + \operatorname{tr}(\omega \cdot du^\top \cdot du^\top),
\end{align*}
where $\operatorname{tr}(A)$ denotes the trace of a matrix $A$. For the trace term we have
\begin{align*}
	\operatorname{tr}(\omega \cdot du^\top \cdot du^\top)&=\operatorname{tr}(du^\top \cdot du^\top \cdot \omega)=\operatorname{tr}(du^\top \cdot \omega^\top \cdot du)=0,
\end{align*}
where we use the cyclic property of the trace operator $\operatorname{tr}(ABC)=\operatorname{tr}(BCA)$, $P(u)=\omega^\top \cdot du-du^\top \cdot \omega=0$ and that the trace of the skew symmetric matrix $du^\top \cdot \omega^\top \cdot du$ is $0$. So the symplectic divergence of \eqref{symplectic_euler} reads as
\[
	\partial_t (\nabla_\omega \cdot u)+(u \cdot \nabla) (\nabla_\omega \cdot u)=0.
\]
Thus denoting by $\varphi$ the flow of $u$ we have $\partial_t [(\nabla_\omega \cdot u) \circ \varphi]=0$. In other words $\nabla_\omega \cdot u$ is frozen into the flow of $u$. We can write this as
\begin{equation}\label{frozen}
	\nabla_\omega \cdot u(t)=(\nabla_\omega \cdot u_0) \circ \varphi(t)^{-1}.
\end{equation}
In particular since $\varphi$ is volume preserving we have that $\|\nabla_\omega \cdot u(t)\|_{L^p}=\|\nabla_\omega \cdot u_0\|_{L^p}$ for $2 \leq p \leq \infty$. By Lemma \ref{symplectic_divergence} we have a connection between $\nabla u$ and $\nabla_\omega \cdot u$ via a singular integral operator. That is not sufficient to bound $\|\nabla u\|_{L^\infty}$ in terms of $\|\nabla_\omega \cdot u\|_{L^\infty}$. But we have similar as in \cite{bkm}

\begin{Lemma}\label{lemma_log}
	Let $n \geq 1$ and $s > 2n/2+1$. Supppose that $T(D)$ is a Fourier multiplier operator with a bounded symbol $T(\cdot) \in C^\infty(\R^{2n} \smallsetminus \{0\};\mathbb C)$ which is homogeneous of degree $0$. Then there is $C > 0$ s.t.
\[
	\|T(D)f\|_{L^\infty} \leq C+C \|f\|_{L^2} +C \|f\|_{L^\infty}+ C \|f\|_{L^\infty} \ln(1+\|f\|_{H^{s-1}})
\]
	for all $f \in H^{s-1}(\R^{2n})$.
\end{Lemma}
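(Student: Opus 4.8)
The plan is to deduce the lemma from a Beale--Kato--Majda type logarithmic estimate obtained through a Littlewood--Paley decomposition, controlling low, intermediate and high frequencies separately and then optimizing over the place where one cuts between the intermediate and high ranges.

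First I would fix a Littlewood--Paley partition of unity: choose $\psi \in C_c^\infty(\R^{2n})$ supported in an annulus $\{1/2 \le |\xi| \le 2\}$ with $\sum_{j \in \mathbb Z}\psi(2^{-j}\xi)=1$ for $\xi \ne 0$, set $\Delta_j := \psi(2^{-j}D)$, and let $S_0 := \phi(D)$ where $\phi := 1 - \sum_{j \ge 1}\psi(2^{-j}\cdot)$ is the resulting smooth bump equal to $1$ near the origin. For $f \in H^{s-1}(\R^{2n})$ one has $f = S_0 f + \sum_{j \ge 1}\Delta_j f$ with convergence in $H^{s-1}$, hence $T(D)f = T(D)S_0 f + \sum_{j \ge 1} T(D)\Delta_j f$, and it suffices to bound each group of terms. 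Note that $\|f\|_{L^\infty}$ is finite here by the Sobolev imbedding $H^{s-1} \hookrightarrow L^\infty$, which uses exactly $s-1 > 2n/2$.

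The crucial observation is a kernel bound uniform in $j$. For $j \ge 1$ the operator $T(D)\Delta_j$ is convolution with $K_j := \mathcal F^{-1}[T(\cdot)\psi(2^{-j}\cdot)]$; since $\psi$ is supported away from $0$, the symbol $T\psi$ lies in $C_c^\infty(\R^{2n})$, and using that $T$ is homogeneous of degree $0$ a rescaling gives $K_j(x) = 2^{2nj} G(2^j x)$ with $G := \mathcal F^{-1}[T\psi] \in \mathcal S(\R^{2n})$, so $\|K_j\|_{L^1} = \|G\|_{L^1}$ is independent of $j$. Hence $\|T(D)\Delta_j f\|_{L^\infty} \le C\|f\|_{L^\infty}$ for all $j \ge 1$. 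Independently, $T(D)\Delta_j f$ has Fourier support in $\{|\xi| \le 2^{j+1}\}$ and $\|T(D)\Delta_j f\|_{L^2} \le \|T\|_{L^\infty}\|\Delta_j f\|_{L^2} \le C 2^{-(s-1)j}\|f\|_{H^{s-1}}$, so Bernstein's inequality in dimension $2n$ gives $\|T(D)\Delta_j f\|_{L^\infty} \le C 2^{nj}\|T(D)\Delta_j f\|_{L^2} \le C 2^{-(s-1-n)j}\|f\|_{H^{s-1}}$, where $s-1-n>0$ by hypothesis. For the low frequencies the symbol $T\phi$ is bounded with compact support, hence lies in $L^2(\R^{2n})$, so $\|T(D)S_0 f\|_{L^\infty} \le \|T\phi\,\hat f\|_{L^1} \le \|T\phi\|_{L^2}\|f\|_{L^2} \le C\|f\|_{L^2}$.

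Combining, for any integer $M \ge 1$,
\[
\|T(D)f\|_{L^\infty} \le \|T(D)S_0 f\|_{L^\infty} + \sum_{j=1}^{M}\|T(D)\Delta_j f\|_{L^\infty} + \sum_{j>M}\|T(D)\Delta_j f\|_{L^\infty} \le C\|f\|_{L^2} + CM\|f\|_{L^\infty} + C 2^{-(s-1-n)M}\|f\|_{H^{s-1}},
\]
using the uniform bound on the first $M$ intermediate blocks and the geometric decay of the tail. Choosing $M = 1 + \big\lfloor (s-1-n)^{-1}(\ln 2)^{-1}\ln(1+\|f\|_{H^{s-1}})\big\rfloor$ makes the last term bounded by a constant, while $M \le C\big(1+\ln(1+\|f\|_{H^{s-1}})\big)$; substituting gives precisely the asserted inequality. (For $\|f\|_{H^{s-1}} \le 1$ one may also simply invoke $H^{s-1}\hookrightarrow L^\infty$ together with the boundedness of $T(D)$ on $H^{s-1}$.) The only non-routine ingredient is the $j$-uniform $L^1$-bound on the kernels $K_j$: this is exactly where smoothness of $T$ on $\R^{2n}\setminus\{0\}$ and homogeneity of degree $0$ are used, i.e.\ a Mikhlin--H\"ormander type input; everything else is standard Littlewood--Paley bookkeeping.
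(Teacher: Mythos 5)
Your proof is correct and follows essentially the same route as the paper: a dyadic decomposition into low, intermediate, and high frequencies, a $j$-uniform $L^1$ kernel bound for the intermediate blocks obtained from the degree-$0$ homogeneity of $T$ by rescaling, and an optimization of the cut-off index against $\ln(1+\|f\|_{H^{s-1}})$. The only (immaterial) difference is that you estimate the high-frequency tail via Bernstein's inequality, whereas the paper applies Cauchy--Schwarz directly to the Fourier integral over $|\xi|\ge 2^N$; both yield the same $2^{-(s-1-n)N}\|f\|_{H^{s-1}}$ decay.
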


\begin{proof}
	Let $f \in H^{s-1}(\R^{2n})$. We will use a dyadic partition to get the estimate. Consider the one given in \cite{chemin_book}. There are smooth radial functions $\theta,\eta:\R^{2n} \to [0,1]$, where $\theta$ is supported in $\overline{B_{4/3}(0)}$ and $\eta$ is supported in $\overline{B_{8/3}(0)} \smallsetminus B_{3/4}(0)$. Here $B_r(0) \subset \R^{2n}$ denotes the open ball of radius $r > 0$ around $0$. Moreover we have
\[
	\theta(\xi)+\sum_{j \geq 0} \eta(2^{-j} \xi)=1,\;\xi \in \R^{2n}.
\]
	So we can write in $H^{s-1}$
	\[
		T(D)f=T(D)\theta(D) f + \sum_{j \geq 0} T(D)\eta(2^{-j}D)f.
	\]
We have
	\[
		(T(D)\theta(D)f)(x)=\mathcal F^{-1}[T(\xi)\theta(\xi)\hat f(\xi)](x)=\frac{1}{(2\pi)^n} \int_{\R^{2n}} e^{ix\cdot \xi} T(\xi) \theta(\xi) \hat f(\xi) \;d\xi.
	\]
Since the domain of integration is compact there is $C > 0$ s.t.
	\[
		\|T(D)\theta(D)f\|_{L^\infty} \leq C \|\hat f\|_{L^2}=C \|f\|_{L^2}.
	\]
For the other part we write
	\[
		\sum_{j \geq 0} T(D)\eta(2^{-j}D)f=\sum_{j=0}^N T(D)\eta(2^{-j}D)f + \sum_{j \geq N+1} T(D)\eta(2^{-j}D)f
	\]
	for some $N \geq 0$ to be determined. Note that we have $T(D)\eta(2^{-j}D)=T(2^{-j}D)\eta(2^{-j}D)$ since $T(\cdot)$ is homogeneous of degree $0$. Thus we can estimate
\begin{align*}
	\|T(D)\eta(2^{-j}D)f\|_{L^\infty} &\leq \|\mathcal F^{-1}[T(2^{-j}\xi)\eta(2^{-j}\xi)\hat f(\xi)]\|_{L^\infty}\\
	&\leq \|\mathcal F^{-1}[T(2^{-j}\xi)\eta(2^{-j}\xi)] \ast f]\|_{L^\infty}\\
	&\leq \|\mathcal F^{-1}[T(2^{-j}\xi)\eta(2^{-j}\xi)]\|_{L^1} \|f\|_{L^\infty}\\
	&=\|\mathcal F^{-1}[T(\xi)\eta(\xi)]\|_{L^1} \|f\|_{L^\infty}.
\end{align*}
	Note that $T(\xi)\eta(\xi)$ is smooth and compactly supported. This implies that $\mathcal F^{-1}[T(\xi)\eta(\xi)]$ is integrable. Thus we have
	\begin{align*}
		\left\| \sum_{j=0}^N T(D)\eta(2^{-j}D)f \right\|_{L^\infty} &\leq (N+1) \|\mathcal F^{-1}[T(\xi)\eta(\xi)]\|_{L^1} \|f\|_{L^\infty}\\
		&\leq C(N+1) \|f\|_{L^\infty}
	\end{align*}
Consider now
	\begin{align*}
		\left(\sum_{j \geq N+1} T(D)\eta(2^{-j}D)f\right)(x)=\frac{1}{(2\pi)^n} \int_{\R^{2n}} e^{ix \cdot \xi} \left(\sum_{j \geq N+1} T(\xi)\eta(2^{-j}\xi)\right) \hat f(\xi)\;d\xi. 
	\end{align*}
We have by the Cauchy-Schwarz inequality
	\begin{align*}
		\left|\left(\sum_{j \geq N+1} T(D)\eta(2^{-j}D)f\right)(x)\right| &\leq C \int_{|\xi| \geq 2^N} |\hat f(\xi)| \;d\xi\\
		&\leq C \int_{|\xi| \geq 2^N} |\xi|^{-(s-1)} |\xi|^{s-1} |\hat f(\xi)| \;d\xi\\
		&\leq C 2^{-\delta N} \|f\|_{H^{s-1}},
	\end{align*}
where $\delta=(s-1)-n>0$. We conclude
	\[
		\left\| \sum_{j \geq 0} T(D)\eta(2^{-j}D)f \right\|_{L^\infty} \leq C (N+1) \|f\|_{L^\infty} + C 2^{-\delta N} \|f\|_{H^{s-1}}.
	\]
	Choosing $N \geq 0$ s.t. $\frac{\ln(1+\|f\|_{H^{s-1}})}{\delta \ln(2)} \leq N < \frac{\ln(1+\|f\|_{H^{s-1}})}{\delta \ln(2)}+1$ we arrive at
\[
	\left\| \sum_{j \geq 0} T(D)\eta(2^{-j}D)f \right\|_{L^\infty} \leq C \|f\|_{L^\infty} \ln(1+\|f\|_{H^{s-1}}) + C \|f\|_{L^\infty} + C.
\]
Putting the estimates together we arrive at
	\begin{align*}
		\|T(D)f\|_{L^\infty} &\leq \|T(D)\theta(D) f\|_{L^\infty} + \left\|\sum_{j \geq 0} T(D)\eta(2^{-j}D)f\right\|_{L^\infty}\\
		&\leq C+C \|f\|_{L^2} +C \|f\|_{L^\infty}+ C \|f\|_{L^\infty} \ln(1+\|f\|_{H^{s-1}}),
	\end{align*}
for some $C > 0$. This finishes the proof.
\end{proof}

By Lemma \ref{symplectic_divergence} we have for $u \in H^s_\omega(\R^{2n};\R^{2n})$, $n \geq 1$ and $s > 2n/2+1$,
\[
	\nabla u=-\nabla (-\Delta)^{-1} \nabla_\omega (\nabla_\omega \cdot u).
\]
Thus we get by Lemma \ref{lemma_log} 
\[
	\|\nabla u\|_{L^\infty} \leq C+C \|\nabla_\omega \cdot u\|_{L^2} +C \|\nabla_\omega \cdot u\|_{L^\infty}+ C \|\nabla_\omega \cdot u\|_{L^\infty} \ln(1+\|\nabla_\omega \cdot u\|_{H^{s-1}}).
\]
Using this we can prove the global well-posedness of \eqref{symplectic_euler}.

\begin{proof}[Proof of Theorem \ref{th_gwp}]
	Let $u_0 \in H^s_\omega(\R^{2n};\R^{2n})$ and $u(t),0 \leq t < T^\ast(u_0)$ be the corresponding solution to \eqref{symplectic_euler}. In the proof of Lemma \ref{blowup_sup} we established
	\[
		\|u(T)\|_{H^s} \leq \|u_0\|_{H^s} e^{C \int_0^T \|\nabla u(t)\|_{L^\infty} \;dt},\; 0 \leq T < T^\ast(u_0).
	\]
Adding $1$ and applying $\ln$ we get
	\begin{align*}
		&\ln(1+\|\nabla_\omega \cdot u(T)\|_{H^{s-1}}) \leq \ln(1+C\|u_0\|_{H^s})+ C \int_0^T \|\nabla u(t) \|_{L^\infty} \;dt\\
		&\leq \ln(1+C\|u_0\|_{H^s}) + CT (1+\|\nabla_\omega \cdot u_0\|_{L^2} + \|\nabla_\omega \cdot u_0\|_{L^\infty})\\
		&+C \|\nabla_\omega \cdot u_0\|_{L^\infty} \int_0^T \ln(1+\|\nabla_\omega \cdot u(t)\|_{H^{s-1}}) \;dt,\; 0 \leq T < T^\ast(u_0).
	\end{align*}
By Gr\"onwall's inequality we see that $\|\nabla_\omega \cdot u(t)\|_{H^{s-1}}$ stays bounded for finite $T$. This in turn implies that $\|\nabla u(t)\|_{H^{s-1}}$ stays bounded and with that we get by using the Sobolev Imbedding Theorem the boundedness of $\|\nabla u(t)\|_{L^\infty}$ for finite $T$. Lemma \ref{blowup_sup} implies now that $T^\ast(u_0)=\infty$, i.e. that the solution $u$ exists for all times.
\end{proof}

\section{The symplectomorphism group $\Ds^s_\omega(\R^{2n})$} \label{section_symplectomorphisms}

	Throughout this section we assume $n \geq 1$ and $s > 2n/2+1$. The goal of this section is to prove that $\Ds^s_\omega(\R^{2n}) \subset \Ds^s(\R^{2n})$ is a closed analytic submanifold. Recall that we've denoted by $U \subset H^s(\R^{2n};\R^{2n})$ the set of initial values $u_0$ for which the solution to \eqref{first_order_ode} exists beyond time $T=1$. With this we define the exponential map $\exp$ as
\[
	\exp:U \subset H^s(\R^{2n};\R^{2n}) \to \Ds^s(\R^{2n}),\;u_0 \mapsto \varphi(1;u_0),
\]
i.e. $\exp$ is the $\varphi$ component of the time $T=1$ solution map of \eqref{first_order_ode}. By analytic dependence on the initial value we get that $\exp$ is analytic. Moreover we conclude from $\varphi(t;u_0)=\varphi(1;tu_0)$ that $0 \leq t < T^\ast(u_0)$ iff $tu_0 \in U$ and $\varphi(t;u_0)=\exp(tu_0)$ for $0 \leq t < T^\ast(u_0)$. This means we can construct all solutions to \eqref{first_order_ode} with the exponential map $\exp$. The derivative of $\exp$ at $0$ in direction of $w \in H^s(\R^{2n};\R^{2n})$ is given by
\[
	\left.\frac{d}{dt}\right|_{t=0} \exp(tw)=\left.\frac{d}{dt}\right|_{t=0} \varphi(t;w)=\varphi_t(0;w)=w.
\]
This means that the derivative at $0$
\[
d_0\exp:H^s(\R^{2n};\R^{2n}) \to H^s(\R^{2n};\R^{2n}),\;w \mapsto w,
\]
is the identity map. Thus we can apply the Inverse Function Theorem. Before doing that we prove the following commutator estimate.

\begin{Lemma}\label{lemma_commutator}
	There is a constant $C > 0$ s.t.
	\[
		\left\| [u \cdot \nabla,\mathcal R_j] f\right\|_{L^2} \leq C \|u\|_{H^s} \|f\|_{L^2}
	\]
	for all $1 \leq j \leq 2n,f \in L^2(R^{2n}),u \in H^s(\R^{2n};\R^{2n})$. Here $\mathcal R_j=\dfrac{\partial_j}{(-\Delta)^{1/2}},j=1,\ldots,2n$, are the Riesz operators and $[A,B]=AB-BA$ denotes the commutator of operators.
\end{Lemma}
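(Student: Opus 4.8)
**Proof proposal for Lemma 5.2 (the commutator estimate $\|[u\cdot\nabla,\mathcal R_j]f\|_{L^2}\le C\|u\|_{H^s}\|f\|_{L^2}$).**

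The plan is to write the commutator as a pseudodifferential / Fourier-integral operator and estimate its kernel. Writing everything on the Fourier side, for $f,g\in L^2$ one has
\[
\langle [u\cdot\nabla,\mathcal R_j]f,g\rangle_{L^2}
=\int\!\!\int K(\xi,\eta)\,\widehat f(\eta)\,\overline{\widehat g(\xi)}\;d\eta\,d\xi,
\qquad
K(\xi,\eta)=\Big(\frac{i\eta_j}{|\eta|}-\frac{i\xi_j}{|\xi|}\Big)\,(\text{$\xi$-component of }\widehat u(\xi-\eta)).
\]
More precisely, since $u\cdot\nabla=\sum_k u_k\partial_k$ and $\mathcal R_j$ has symbol $-i\xi_j/|\xi|$, the kernel of $[u\cdot\nabla,\mathcal R_j]$ in frequency is $\sum_k \widehat{u_k}(\xi-\eta)\,i\eta_k\big(\tfrac{-i\eta_j}{|\eta|}\big)-\sum_k \widehat{u_k}(\xi-\eta)\,i\xi_k\big(\tfrac{-i\xi_j}{|\xi|}\big)$, i.e. it is $\widehat{u_k}(\xi-\eta)$ times $i\eta_k\cdot\frac{-i\eta_j}{|\eta|}-i\xi_k\cdot\frac{-i\xi_j}{|\xi|}$; the key point is that the commutator kills the top-order term, so the remaining multiplier $m_k(\xi,\eta):=\eta_k\frac{\eta_j}{|\eta|}-\xi_k\frac{\xi_j}{|\xi|}$ grows only linearly, $|m_k(\xi,\eta)|\le C|\xi-\eta|$, by the mean value theorem applied to the homogeneous-degree-one function $\zeta\mapsto\zeta_k\zeta_j/|\zeta|$ (whose gradient is bounded).

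With $|m_k(\xi,\eta)|\le C|\xi-\eta|$ the operator $[u\cdot\nabla,\mathcal R_j]$ is dominated, in the sense of bilinear forms, by convolution against $|\xi-\eta|\,|\widehat u(\xi-\eta)|$. By Schur's test it therefore suffices to bound
\[
\sup_{\xi}\int |\xi-\eta|\,|\widehat u(\xi-\eta)|\,d\eta
=\int |\zeta|\,|\widehat u(\zeta)|\,d\zeta
\le\Big(\int(1+|\zeta|^2)^{-(s-1)}d\zeta\Big)^{1/2}\Big(\int(1+|\zeta|^2)^{s-1}|\zeta|^2|\widehat u(\zeta)|^2 d\zeta\Big)^{1/2}
\le C\|u\|_{H^s},
\]
where the first integral converges because $2(s-1)>2n$ (this is exactly $s>2n/2+1$), and the second is controlled by $\|u\|_{H^s}$ since $|\zeta|^2(1+|\zeta|^2)^{s-1}\le(1+|\zeta|^2)^s$. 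The same bound with the roles of $\xi,\eta$ interchanged gives the other Schur bound, and Schur's lemma yields $\|[u\cdot\nabla,\mathcal R_j]f\|_{L^2}\le C\|u\|_{H^s}\|f\|_{L^2}$ with $C$ independent of $f,u,j$.

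The one genuinely delicate point is the mean value estimate $|m_k(\xi,\eta)|\le C|\xi-\eta|$ near the origin, since $\zeta\mapsto\zeta_k\zeta_j/|\zeta|$ is only Lipschitz (not $C^1$) at $\zeta=0$: one must check that its difference quotients stay bounded across $0$, which holds because the function is Lipschitz with a dimensional constant (it is homogeneous of degree $1$ and bounded on the unit sphere with bounded tangential derivatives there). Alternatively, and perhaps more cleanly for the writeup, I would split $\widehat u=\chi(D)\widehat u+(1-\chi(D))\widehat u$: on the low-frequency piece everything is smooth and compactly supported so the bound is trivial, while on the high-frequency piece $|\zeta|\ge 1$ one has the honest $C^1$ bound on $\zeta_k\zeta_j/|\zeta|$ away from the origin. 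Either way the estimate is elementary; the essential mechanism — as in the Beale–Kato–Majda and Kato–Ponce circle of ideas already invoked in the paper — is simply that commuting a vector field with a $0$-order Riesz transform produces an operator of order $0$ rather than order $1$, and the Sobolev threshold $s>2n/2+1$ is precisely what makes $\widehat u\in L^1$ after weighting by $|\zeta|$.
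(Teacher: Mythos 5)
Your overall strategy is essentially the paper's: pass to the Fourier side, bound the bilinear kernel of the commutator pointwise by $C\,|\xi-\eta|\,|\hat u(\xi-\eta)|$, and conclude by Schur/Young together with the Cauchy--Schwarz bound $\int|\zeta|\,|\hat u(\zeta)|\,d\zeta\le C\|u\|_{H^s}$, which uses exactly $s-1>n$. The paper phrases the $L^2$ step as Young's convolution inequality rather than Schur's test (the same thing for a convolution-type kernel), and proves the pointwise multiplier bound by an explicit trigonometric argument on the angle between $\xi$ and $\eta$ rather than your mean-value/Lipschitz argument; these are cosmetic differences.

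There is, however, one concrete error in your identification of the kernel. In the term $\mathcal R_j((u\cdot\nabla)f)$ the derivative $\partial_k$ falls on $f$ \emph{before} the convolution with $u_k$, so it contributes the input frequency $i\eta_k$, not the output frequency $i\xi_k$. The correct multiplier is
\[
\sum_k\hat u_k(\xi-\eta)\,\eta_k\Bigl(\frac{\eta_j}{|\eta|}-\frac{\xi_j}{|\xi|}\Bigr),
\]
as in the paper, not $\sum_k\hat u_k(\xi-\eta)\bigl(\eta_k\eta_j/|\eta|-\xi_k\xi_j/|\xi|\bigr)$, and the pointwise inequality one actually needs is $|\eta|\,\bigl|\tfrac{\eta}{|\eta|}-\tfrac{\xi}{|\xi|}\bigr|\le 2|\xi-\eta|$, which is what the paper establishes. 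Your argument is salvageable in one line: the true multiplier equals your $m_k(\xi,\eta)$ plus $(\xi_k-\eta_k)\,\xi_j/|\xi|$, and this correction is trivially bounded by $|\xi-\eta|$, so the bound $\le C|\xi-\eta|$ persists; but as written you are estimating the wrong operator. On the subsidiary point you raise: the global Lipschitz bound for the degree-one homogeneous function $\zeta\mapsto\zeta_k\zeta_j/|\zeta|$ does hold across the origin (if the segment $[\eta,\xi]$ stays at distance $\ge|\eta|/2$ from $0$ apply the mean value theorem there, since the gradient is homogeneous of degree $0$ and bounded on the sphere; otherwise $|\xi-\eta|\ge|\eta|/2$ and one bounds $|F(\xi)|+|F(\eta)|\le M(|\xi|+|\eta|)\le 5M|\xi-\eta|$ directly), so your proposed low/high frequency splitting is unnecessary.
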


\begin{proof}
	Let $1 \leq j \leq 2n$. The Riesz operator $\mathcal R_j$ is a Fourier multiplier operator with symbol $-i\xi_j/|\xi|,\xi \in \R^{2n}$. The Fourier Transform of $[u \cdot \nabla,\mathcal R_j] f$ is given by
\begin{align*}
	\mathcal F\left[[u \cdot \nabla,\mathcal R_j] f\right](\xi)=\int_{\R^{2n}} \hat u(\xi-\eta) \cdot \eta \left(\frac{\xi_j}{|\xi|}-\frac{\eta_j}{|\eta|}\right) \hat f(\eta) \;d\eta. 
\end{align*}
Thus we can estimate
	\begin{align*}
		\left|\mathcal F\left[[u \cdot \nabla,\mathcal R_j] f\right](\xi)\right| \leq \int_{\R^{2n}} |\hat u(\xi-\eta)| |\eta| \left|\frac{\xi}{|\xi|}-\frac{\eta}{|\eta|}\right| |\hat f(\eta)| \;d\eta. 
\end{align*}
We claim that 
	\[
		|\eta| \left|\frac{\xi}{|\xi|}-\frac{\eta}{|\eta|}\right| \leq 2 |\xi-\eta|,\;\xi,\eta \in \R^{2n} \smallsetminus \{0\}.
	\]
Rewritten this reads as
	\begin{equation}\label{vector_ineq}
		\left|\frac{\xi}{|\xi|}-\frac{\eta}{|\eta|}\right| \leq 2 \left|\frac{\xi}{|\eta|}-\frac{\eta}{|\eta|}\right|,\;\xi,\eta \in \R^{2n} \smallsetminus \{0\}.
	\end{equation}
	If the angle $\angle (\xi,\eta) \in [0,\pi]$ between $\xi,\eta \in \R^{2n} \smallsetminus \{0\}$ satisfies $\angle (\xi,\eta) \geq \pi/2$ we then have $|\xi/|\eta|-\eta/|\eta|| \geq 1$ and \eqref{vector_ineq} holds by the triangle inequality. For $\angle(\xi,\eta) \in [0,\pi/2)$ we have
\[
	\left|\frac{\xi}{|\eta|}-\frac{\eta}{|\eta|}\right| \geq \sin(\angle(\xi,\eta)),
\]
	since the orthogonal projection of $\eta/|\eta|$ in direction of $\xi/|\eta|$ is $\sin(\angle(\xi,\eta))$. On the other hand we have
\[
	\left|\frac{\xi}{|\xi|}-\frac{\eta}{|\eta|}\right| = 2 \sin(\frac{1}{2} \angle(\xi,\eta)).
\]
Due to the trigonometric identity
	\[
		\sin(\angle(\xi,\eta))=2 \sin(\frac{1}{2} \angle(\xi,\eta)) \cos(\frac{1}{2} \angle(\xi,\eta))
	\]
	we have $2 \sin(\frac{1}{2} \angle(\xi,\eta)) \leq 2 \sin(\angle(\xi,\eta))$ and the inequality \eqref{vector_ineq} follows. So we have
	\[
		\left|\mathcal F\left[[u \cdot \nabla,\mathcal R_j] f\right](\xi)\right| \leq 2 \int_{\R^{2n}} |\hat u(\xi-\eta)| |\xi-\eta| |\hat f(\eta)| \;d\eta. 
		\]
Using Young's inequality for convolutions we get
	\[
		\|\mathcal F\left[[u \cdot \nabla,\mathcal R_j] f\right]\|_{L^2} \leq 2 \| |\hat u(\xi)| |\xi| \|_{L^1(d\xi)} \|\hat f\|_{L^2}.
	\]
Note that
	\[
		\int_{\R^{2n}} |\xi| |\hat u(\xi)| \;d\xi = \int_{\R^{2n}} \frac{1}{(1+|\xi|^2)^{(s-1)/2}} (1+|\xi|^2)^{(s-1)/2} |\xi| |\hat u(\xi)| \;d\xi.
	\]
Thus applying Cauchy-Schwarz we get
	\[
		\| |\hat u(\xi)| |\xi| \|_{L^1(d\xi)} \leq C \|u\|_{H^s}
	\]
for some constant $C > 0$ as $s-1 > 2n/2$. Plancherel's Theorem now gives us the needed estimate
	\[
		\left\| [u \cdot \nabla,\mathcal R_j] f\right\|_{L^2} \leq C \|u\|_{H^s} \|f\|_{L^2}.
	\]
\end{proof}

\begin{Lemma}\label{exp_chart}
	There are open neighborhoods $V \subset H^s(\R^{2n};\R^{2n})$ of $0$ resp. $W \subset \Ds^s(\R^{2n})$ of $\text{id}$ with $V \subset U$ s.t. $\left. \exp \right|_V:V \to W$ is a diffeomorphism and
\[
	\exp(V \cap H^s_\omega(\R^{2n};\R^{2n}))=W \cap \Ds^s_\omega(\R^{2n}).
\]
\end{Lemma}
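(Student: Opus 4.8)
The plan is to combine the Inverse Function Theorem for $\exp$ with a ``straightening'' of $\Ds^s_\omega(\R^{2n})$ near $\id$ by means of the map $\Psi(\varphi)=\varphi^\ast\omega-\omega$, whose zero set is $\Ds^s_\omega(\R^{2n})$. First I would note that, since $\exp$ is analytic and $d_0\exp$ is the identity map of $H^s(\R^{2n};\R^{2n})$ (as recorded above), the Inverse Function Theorem gives open neighbourhoods $V_0\ni 0$ in $H^s(\R^{2n};\R^{2n})$ and $W_0\ni\id$ in $\Ds^s(\R^{2n})$, with $V_0\subset U$, such that $\exp|_{V_0}\colon V_0\to W_0$ is an analytic diffeomorphism; the $V,W$ of the statement will be obtained by shrinking $V_0$ at the end and setting $W=\exp(V)$. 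Thus the set equality to prove becomes $(\Psi\circ\exp)^{-1}(0)\cap V=V\cap H^s_\omega(\R^{2n};\R^{2n})$.

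Next I would establish the easy inclusion $\exp(V_0\cap H^s_\omega(\R^{2n};\R^{2n}))\subset W_0\cap\Ds^s_\omega(\R^{2n})$, i.e. that $\Psi\circ\exp$ vanishes on $V_0\cap H^s_\omega(\R^{2n};\R^{2n})$. For $u_0$ in this set the solution $u(\cdot)$ of \eqref{symplectic_euler_extended} with $u(0)=u_0$ exists on $[0,1]$ and, by Lemma \ref{lemma_new_to_old}, satisfies $u(t)\in H^s_\omega(\R^{2n};\R^{2n})$, i.e. $L_{u(t)}\omega=0$, for $t\in[0,1]$; hence its flow $\varphi(t)=\exp(tu_0)$ obeys $\frac{d}{dt}(\varphi(t)^\ast\omega)=\varphi(t)^\ast L_{u(t)}\omega=0$, so $\varphi(1)^\ast\omega=\omega$, that is $\exp(u_0)\in\Ds^s_\omega(\R^{2n})$.

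The core of the argument is the analysis of $\Psi$. For $\varphi=\id+w$ one has $\varphi^\ast\omega=\sum_i d\varphi_{2i-1}\wedge d\varphi_{2i}$, so $\Psi(\id+w)$ is a fixed polynomial of degree $\le 2$ in the first derivatives of $w$; since $H^{s-1}(\R^{2n})$ is a Banach algebra ($s-1>2n/2$), this exhibits $\Psi$ as an analytic map from $W_0$ into the Hilbert space $Z^{s-1}$ of closed two-forms with coefficients in $H^{s-1}(\R^{2n})$ (a closed subspace of $H^{s-1}(\R^{2n};\R^{2n\times 2n}_{\text{skew}})$), and a direct expansion gives $d_\id\Psi(w)=d\imath_w\omega=P(w)$, the operator $P$ of Section \ref{section_eulerian} regarded as taking values in $Z^{s-1}$. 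Its kernel is $H^s_\omega(\R^{2n};\R^{2n})$, a closed (hence, $H^s$ being Hilbert, complemented) subspace — a concrete bounded projection onto it being $-(-\Delta)^{-1}\nabla_\omega(\nabla_\omega\cdot\,)$ by Lemma \ref{symplectic_divergence}. Its image is all of $Z^{s-1}$: if $\xi\in Z^{s-1}$ then $\xi=d(d^\ast\Delta^{-1}\xi)$ with $d^\ast\Delta^{-1}\xi$ a one-form in $H^s$, so $\xi=d\imath_v\omega=P(v)$ where $v$ is defined by $\imath_v\omega=d^\ast\Delta^{-1}\xi$. Hence $P\colon H^s(\R^{2n};\R^{2n})\to Z^{s-1}$ is a surjection with complemented kernel.

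Finally I would apply the analytic implicit function theorem to $\Psi\circ\exp\colon V_0\to Z^{s-1}$: it is analytic, vanishes at $0$, and $d_0(\Psi\circ\exp)=d_\id\Psi\circ d_0\exp=P$ is a split surjection with kernel $H^s_\omega(\R^{2n};\R^{2n})$. So after shrinking $V_0$ to some $V'$, the zero set $(\Psi\circ\exp)^{-1}(0)\cap V'$ is the graph, over a neighbourhood of $0$ in $H^s_\omega(\R^{2n};\R^{2n})$, of an analytic map $h$ into a topological complement of $H^s_\omega(\R^{2n};\R^{2n})$, with $h(0)=0$ and $d_0h=0$. The easy inclusion shows this graph contains $V'\cap H^s_\omega(\R^{2n};\R^{2n})$, which forces $h\equiv 0$ near $0$; thus for a small enough neighbourhood $V\subset V'$ one obtains $(\Psi\circ\exp)^{-1}(0)\cap V=V\cap H^s_\omega(\R^{2n};\R^{2n})$, and with $W=\exp(V)$ the lemma follows. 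I expect the main obstacle to be the third step: checking that $\Psi$ lands analytically in the closed subspace $Z^{s-1}$, and — the crucial point — that its linearisation $P$ is onto $Z^{s-1}$, which amounts to the Poincaré lemma on $\R^{2n}$ in the quantitative form ``every closed $H^{s-1}$ two-form has an $H^s$ primitive'', supplied by the Hodge-type identity $\xi=d(d^\ast\Delta^{-1}\xi)$.
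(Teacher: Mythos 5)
Your first two steps (the Inverse Function Theorem for $\exp$, and the inclusion $\exp(V\cap H^s_\omega)\subset\Ds^s_\omega$ via $\frac{d}{dt}\varphi(t)^\ast\omega=\varphi(t)^\ast L_{u(t)}\omega=0$) coincide with the paper's proof. The gap is in the hard direction, exactly at the point you flag as the ``crucial point'': the claim that $P=d_{\id}\Psi\circ d_0\exp$ is surjective onto the space $Z^{s-1}$ of closed two-forms with $H^{s-1}$ coefficients. Your justification, $\xi=d(d^\ast\Delta^{-1}\xi)$ with $d^\ast\Delta^{-1}\xi\in H^s$, does not hold on $\R^{2n}$: the multiplier $d^\ast\Delta^{-1}$ is of order $-1$ and its symbol blows up like $|\zeta|^{-1}$ at $\zeta=0$, so it is not bounded from $H^{s-1}$ into $H^s$ (this low-frequency failure of the Hodge decomposition on $\R^{2n}$ is precisely why the paper introduces the cut-off $\chi(D)$ in Section \ref{section_eulerian}). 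The surjectivity itself is false in general: for $n=1$ take $\xi=f\,dx_1\wedge dx_2$ with $f\in H^{s-1}(\R^2)$ and $\int_{\R^2}f\,dx=c\neq 0$; any one-form $\alpha$ with $d\alpha=\xi$ satisfies $\oint_{\partial B_R}\alpha=\int_{B_R}f\to c$, and Cauchy--Schwarz on circles gives $\oint_{\partial B_R}|\alpha|^2\geq |c_R|^2/(2\pi R)$, whence $\|\alpha\|_{L^2}=\infty$; so $\xi$ has no $L^2$ (a fortiori no $H^s$) primitive and is not in the image of $P$. Without a split surjection onto an identified target Banach space the implicit function theorem cannot be applied, and it is not clear how to replace $Z^{s-1}$ by the image of $P$ (which need not be closed, and into which $\Psi$ would have to be shown to map analytically). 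So the core of your argument does not go through as written.

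For comparison, the paper avoids any surjectivity statement and instead proves the hard inclusion by a direct quantitative transversality estimate: writing $(d\varphi(1))^\top\omega\, d\varphi(1)-\omega=-\int_0^1(d\varphi)^\top\cdot P(u)\circ\varphi\cdot d\varphi\,dt$ and Taylor-expanding the integrand in $t$, it shows via the commutator estimate of Lemma \ref{lemma_commutator}, the estimate \eqref{Q_estimate} and Gr\"onwall that $\|P(u(t))\|_{L^2}\leq 2\|P(u_0)\|_{L^2}$ and ultimately $\|(d\varphi(1))^\top\omega\, d\varphi(1)-\omega\|_{L^2}\geq\frac{1}{2}\|P(u_0)\|_{L^2}$ for $u_0$ small. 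This lower bound in the weak $L^2$ norm directly shows $\exp(B_\delta(0)\smallsetminus H^s_\omega)\cap\Ds^s_\omega=\emptyset$, which is the substitute for your graph argument. If you want to salvage your scheme, you would need to replace the target $Z^{s-1}$ by something on which $P$ genuinely is a split surjection, which on $\R^{2n}$ is the real difficulty.
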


\begin{proof}
	By the Inverse Function Theorem we fix $R > 0$ s.t. $\left. \exp \right|_{B_R(0)}$ is a diffeomorphism onto its image. Here $B_R(0) \subset H^s(\R^{2n};\R^{2n})$ is the open ball of radius $R$ with center $0$, i.e. $B_R(0)=\{w \in H^s(\R^{2n};\R^{2n}) \;|\; \|w\|_{H^s} < R\}$. First we prove
\[
	\exp(B_R(0) \cap H^s_\omega(\R^{2n};\R^{2n})) \subset \Ds^s_\omega(\R^{2n}).
\]
	Let $u_0 \in B_R(0) \cap H^s_\omega(\R^{2n};\R^{2n})$. We define $\varphi(t)=\exp(tu_0),0 \leq t \leq 1$ and with that $u(t)=\varphi_t(t) \circ \varphi(t)^{-1},0 \leq t \leq 1$. We know that $u$ solves \eqref{symplectic_euler_new} since it starts in $H^s_\omega(\R^{2n};\R^{2n})$. Thus we have $L_{u(t)}\omega=0,0 \leq t \leq 1$. Consider now
\[
	\frac{d}{dt} \varphi(t)^\ast \omega=\varphi(t)^\ast L_{u(t)} \omega=0. 
\]
	Since $\varphi(0)^\ast \omega=\omega$ we get $\varphi(1)^\ast \omega=\omega$. Thus $\varphi(1)=\exp(u_0)$ is in $\Ds^s_\omega(\R^{2n})$. Next we prove that there is $0 < \delta \leq R$ s.t.
\[
	\exp(B_\delta(0) \smallsetminus H^s_\omega(\R^{2n};\R^{2n})) \subset \Ds^s(\R^{2n}) \smallsetminus \Ds^s_\omega(\R^{2n}).
\]
Note that $\varphi^\ast \omega=\omega$ written in matrix form is
	\[
		(d_x \varphi)^\top \cdot \omega \cdot d_x \varphi=\omega,\;x \in \R^{2n}.
	\]
If we take the $t$ derivative of the left hand side we get
	\begin{align*}
		\frac{d}{dt} ((d\varphi)^\top \cdot \omega \cdot d\varphi) &= (d\varphi_t)^\top \cdot \omega \cdot d\varphi+ (d\varphi)^\top \cdot \omega \cdot d\varphi_t\\
		&=(d\varphi)^\top \cdot (du)^\top \circ \varphi \cdot \omega \cdot d\varphi+(d\varphi)^\top \cdot \omega \cdot du \circ \varphi \cdot d\varphi\\
		&=-(d\varphi)^\top \cdot ( \omega^\top \cdot du-(du)^\top \cdot \omega) \circ \varphi \cdot d\varphi\\
		&=- (d\varphi)^\top \cdot P(u) \circ \varphi \cdot d\varphi. 
	\end{align*}
	where we used $d\varphi_t=du \circ \varphi \cdot d\varphi$ and $P(u)=\omega^\top \cdot du-(du)^\top \cdot \omega$. For $u_0 \in B_R(0)$ denote by $\varphi(t)=\exp(tu_0),0 \leq t \leq 1$ and $u(t)=\varphi_t(t) \circ \varphi(t)^{-1},0 \leq t \leq 1$. We have
\begin{align*}
	(d\varphi(1))^\top \cdot \omega \cdot d\varphi(1)-\omega&=\int_0^1 \frac{d}{dt} ((d\varphi(t))^\top \cdot \omega \cdot d\varphi(t)) \;dt\\
	&=-\int_0^1 (d\varphi(t))^\top \cdot P(u(t)) \circ \varphi(t) \cdot d\varphi(t) \;dt
\end{align*}
Now we calculate
	\begin{align*}
		&\frac{d}{dt} [(d\varphi)^\top \cdot P(u) \circ \varphi \cdot d\varphi]=(d\varphi)^\top \cdot (du)^\top \circ \varphi \cdot P(u) \circ \varphi \cdot d\varphi\\
		&+(d\varphi)^\top \cdot [P(u_t)+(u \cdot \nabla)P(u)] \circ \varphi \cdot d\varphi+(d\varphi)^\top \cdot P(u) \cdot du \circ \varphi \cdot d\varphi.
	\end{align*}
	We need to estimate the above in terms of $P(u)$. The only term causing difficulties is $P(u_t)+(u \cdot \nabla) P(u)$. We have by \eqref{symplectic_euler_new}
\begin{align*}
	P(u_t)=& -P((u \cdot \nabla)u)+PB(u)=\\
	&-P((u \cdot \nabla)u)-\frac{1}{2} \Delta^{-1} (1-\chi(D)) PP^\ast P_H(u)-\frac{1}{2} \chi(D) \Delta^{-1} PP^\ast P_L(u).
\end{align*}
	Replacing $P_H(u)=P((u \cdot \nabla)u)-(u \cdot \nabla) P(u)$ and $P_L(u)=P((u \cdot \nabla)u)-(u \cdot \nabla) P(u)+Q(u)$ we can write
\begin{align*}
	P(u_t)=& -P((u \cdot \nabla)u)
	-\frac{1}{2} \Delta^{-1} PP^\ast P((u \cdot \nabla)u)\\
	&+ \frac{1}{2} \Delta^{-1} PP^\ast ((u \cdot \nabla) P(u)) -\frac{1}{2} \Delta^{-1} PP^\ast \chi(D)Q(u).
\end{align*}
	Note that $\Delta^{-1} PP^\ast$ is a linear combination of operators $\mathcal R_i \mathcal R_j,1 \leq i,j \leq 2n$. In particular it is bounded on the Sobolev spaces. If $P^\ast P(w)=0$ then $P(w)=0$ as one can see from $\langle P^\ast P(w),w\rangle_{L^2} = \langle P(w),P(w) \rangle_{L^2}$. If we apply $P^\ast$ to $-P((u \cdot \nabla)u) -\frac{1}{2} \Delta^{-1} PP^\ast P((u \cdot \nabla)u)$ we get
\begin{align*}
	-P^\ast P((u \cdot \nabla) u ) + P^\ast P((u \cdot \nabla)u)=0,
\end{align*}
	where we used $-\frac{1}{2} P^\ast P P^\ast = \Delta P^\ast$. So we conclude
\begin{align*}
	P(u_t)= \frac{1}{2} \Delta^{-1} PP^\ast ((u \cdot \nabla) P(u)) -\frac{1}{2} \Delta^{-1} PP^\ast \chi(D)Q(u).
\end{align*}
We write this as
\begin{align*}
	&P(u_t)= \\
	&-\frac{1}{2} [u \cdot \nabla,\Delta^{-1} PP^\ast] P(u) + \frac{1}{2} (u \cdot \nabla) \Delta^{-1}P P^\ast P(u) -\frac{1}{2} \Delta^{-1} PP^\ast \chi(D)Q(u).
\end{align*}
	Applying $P^\ast$ to $\frac{1}{2} \Delta^{-1} P P^\ast P(u)+P(u)$ gives $0$ where we use again $-\frac{1}{2}P^\ast P P^\ast=\Delta P^\ast$. In particular we have
\[
	(u \cdot \nabla) \frac{1}{2} \Delta^{-1}P P^\ast P(u) + (u \cdot \nabla) P(u)=0.
\]
So we end up with
\begin{align*}
	P(u_t)+(u \cdot \nabla)P(u)= 
	-\frac{1}{2} [u \cdot \nabla,\Delta^{-1} PP^\ast] P(u)  -\frac{1}{2} \Delta^{-1} PP^\ast \chi(D)Q(u).
\end{align*}
Note that the first term on the right hand side consists of linear combinations of expressions of the form $[u \cdot \nabla, \mathcal R_i \mathcal R_j] f,1 \leq i,j \leq 2n$, and $f$ is a component of $P(u)$. For such expressions we can write
\[
	[u \cdot \nabla,\mathcal R_i \mathcal R_j] f = [u \cdot \nabla,\mathcal R_i] \mathcal R_j f + \mathcal R_i [u \cdot \nabla,\mathcal R_j] f.
\]
Applying Lemma \ref{lemma_commutator} gives
\[
	\left\|-\frac{1}{2} [u \cdot \nabla,\Delta^{-1} PP^\ast] P(u)\right\|_{L^2} \leq C \|u\|_{H^s} \|P(u)\|_{L^2},
\]
for some $C > 0$ independent of $u$. Using \eqref{Q_estimate} with $f=u$, $g=\operatorname{div} u$ and $h \in L^2$ a test function we get
\[
	\|\chi(D)Q\|_{L^2} \leq C \|u\|_{H^s} \|\operatorname{div}u\|_{L^2} \leq C \|u\|_{H^s} \|P(u)\|_{L^2}
\]
for some $C > 0$. Putting these two estimates together we get
\[
	\|P(u_t)+(u \cdot \nabla)P(u)\|_{L^2} \leq C \|u\|_{H^s} \|P(u)\|_{L^2}
\]
for some $C > 0$. From \eqref{p_ineq} we have
\[
	\frac{d}{dt} \|P(u)\|_{L^2} \leq C \|u\|_{H^s} \|P(u)\|_{L^2}.
\]
Since $u_0 \mapsto u$ is continuous we can control the size of $\|u\|_{H^s}$ by taking $u_0$ small enough. Choosing $0 < \delta_1 \leq R$ small enough we can ensure due to Gr\"onwall's inequality that
\[
	\|P(u)\|_{L^2} \leq 2 \|P(u_0)\|_{L^2}
\]
for $u_0 \in B_{\delta_1}(0)$. Now we write
\begin{align*}
	(d\varphi(1))^\top \cdot \omega \cdot d\varphi(1)-\omega
	&=-\int_0^1 (d\varphi(t))^\top \cdot P(u(t)) \circ \varphi(t) \cdot d\varphi(t) \;dt\\
	&=P(u_0)-\int_0^1 \int_0^t \frac{d}{dt} [d\varphi^\top \cdot P(u) \circ \varphi \cdot d\varphi] \;dsdt.
\end{align*}
The latter inner integrand consists of expressions of the form $d\varphi^\top \cdot f \circ \varphi \cdot d\varphi$. Note that 
\[
	\Ds^s(\R^{2n}) \times L^2(\R^{2n}) \to L^2(\R^{2n}),\;(\varphi,f) \mapsto  d\varphi^\top \cdot f \circ \varphi \cdot d\varphi,
\]
is continuous. Moreover since it is linear in $f$ there is $0 < \delta_2 \leq \delta_1$ s.t. for some $C > 0$ we have
\[
	\left\|d\varphi^\top \cdot f \circ \varphi \cdot d\varphi\right\|_{L^2} \leq C \|f\|_{L^2}
\]
for all $f \in L^2(\R^{2n})$ and $\varphi \in \exp(B_{\delta_2}(0))$. Using this we get 
\begin{align*}
	&\left\|\frac{d}{dt} [(d\varphi)^\top \cdot P(u) \circ \varphi \cdot d\varphi]\right\|_{L^2} = \|d\varphi^\top \cdot (du)^\top \circ \varphi \cdot P(u) \circ \varphi \cdot d\varphi\\
	&+ d\varphi^\top \cdot [P(u_t)+(u \cdot \nabla)P(u)] \circ \varphi \cdot d\varphi+d\varphi^\top \cdot P(u) \cdot du \circ \varphi \cdot d\varphi\|_{L^2}\\
	&\leq C \|u\|_{H^s} \|P(u)\|_{L^2}.
\end{align*}
By taking $0 < \delta \leq \delta_2$ small enough we can ensure that
\[
	\|d\varphi(1)^\top \cdot \omega \cdot d\varphi(1)-\omega\|_{L^2} \geq \frac{1}{2} \|P(u_0)\|_{L^2}.
\]
So we arrive at $\exp(B_\delta(0) \smallsetminus H_\omega^s(\R^{2n};\R^{2n})) \subset \Ds^s(\R^{2n}) \smallsetminus \Ds^s_\omega(\R^{2n})$. Hence 
\[
	\exp(B_\delta(0) \cap H_\omega^s(\R^{2n};\R^{2n})) = \exp(B_\delta(0)) \cap \Ds^s_\omega(\R^{2n}).
\]
This finishes the proof.
\end{proof}

If we take $V,W$ as in Lemma \ref{exp_chart} we get a chart $\left(\left.\exp\right|_V\right)^{-1}:W \to V$ for $\Ds^s_\omega(\R^{2n})$ around $\text{id}$ with 
\[
	\left(\left.\exp\right|_V\right)^{-1}(W \cap \Ds^s_\omega(\R^{2n}))=V \cap H^s_\omega(\R^{2n};\R^{2n}).
\]
We can use this to get a chart around every element of $\Ds^s_\omega(\R^{2n})$. So we can prove Theorem \ref{th_submanifold}.

\begin{proof}[Proof of Theorem \ref{th_submanifold}]
That $\Ds^s_\omega(\R^{2n}) \subset \Ds^s(\R^{2n})$ is a closed subset follows from the continuity of
	\[
		G:\Ds^s(\R^{2n}) \to H^{s-1}(\R^{2n};\R^{2n \times 2n}_\text{skew}),\; \varphi \mapsto (d\varphi)^\top \cdot \omega \cdot d\varphi - \omega,
	\]
	and $\Ds^s_\omega(\R^{2n})=G^{-1}(\{0\})$. The symplectomorphisms $\Ds^s_\omega(\R^{2n})$ form a subgroup of $\Ds^s(\R^{2n})$ as one can see easily from the characterization $(d\varphi)^\top \cdot \omega \cdot d\varphi = \omega$. Now let $\psi \in \Ds^s_\omega(\R^{2n})$. We denote by
	\[
		R_\psi:\Ds^s(\R^{2n}) \to \Ds^s(\R^{2n}),\;\varphi \mapsto \varphi \circ \psi,
	\]
	the composition from the right with $\psi$. Note that $R_\psi$ is analytic since it is an affine map. Moreover it is invertible with inverse $R_\psi^{-1}=R_{\psi^{-1}}$. Let $V,W$ be as in Lemma \ref{exp_chart}. Then $(R_\psi \circ \left. \exp \right|_V)^{-1}:R_\psi(W) \to V$ provides a chart around $\psi$. This shows that $\Ds^s_\omega(\R^{2n})$ is a closed analytic submanifold of $\Ds^s(\R^{2n})$. 
\end{proof}

\section{Lagrangian mechanics}\label{section_lagrangian_mechanics}

Lemma \ref{exp_chart} tells us that the tangent space of $\Ds^s_\omega(\R^{2n})$ at $\text{id}$ is given by
\[
	T_\text{id}\Ds^s_\omega(\R^{2n})=H^s_\omega(\R^{2n};\R^{2n}).
\]
	For $\psi \in \Ds^s_\omega(\R^{2n})$ the affine diffeomorphism
\[
	R_\psi:\Ds^s(\R^{2n}) \to \Ds^s(\R^{2n}),\;\varphi \mapsto \varphi \circ \psi,
\]
	shows us that the tangent space of $\Ds^s_\omega(\R^{2n})$ at $\psi$ is given by
\[
	T_\psi \Ds^s_\omega(\R^{2n})=R_\psi(H^s_\omega(\R^{2n};\R^{2n})),
\]
	i.e. $w \in T_\psi \Ds^s_\omega(\R^{2n})$ is of the form $w=\tilde w \circ \psi$ for some $\tilde w \in H^s_\omega(\R^{2n};\R^{2n})$.\\

Now that we've a smooth structure for $\Ds^s_\omega(\R^{2n})$ we try to express \eqref{symplectic_euler} using the formalism of Lagrangian mechanics. We take
\[
	L:T\Ds^s_\omega(\R^{2n}) \to \R,\; (\varphi,\varphi_t) \mapsto L(\varphi,\varphi_t)=\frac{1}{2} \int_{\R^{2n}} \varphi_t^2 \;dx,
\]
as Lagrangian. Here $T\Ds^s_\omega(\R^{2n})$ denotes the tangent bundle of $\Ds^s_\omega(\R^{2n})$. With this we consider the action functional defined for $C^1$ paths $\varphi:[0,1] \to \Ds^s_\omega(\R^{2n})$ as
\[
	\mathcal L(\varphi)=\int_0^1 L(\varphi(t),\varphi_t(t)) \;dt.
\]
We fix $\psi_0,\psi_1 \in \Ds^s_\omega(\R^{2n})$ and denote
\[
	S_{\psi_0}^{\psi_1}=\{\varphi:[0,1] \to \Ds^s_\omega(\R^{2n}) \;|\; \varphi \text{ is a } C^2 \text{ path},\varphi(0)=\psi_0,\varphi(1)=\psi_1\}.
\]
The reason we take $C^2$ is for doing an integration by parts as we will see below. Hamilton's principle tells us that the system described by the Lagrangian $L(\varphi,\varphi_t)$ evolves as $\varphi \in S_{\psi_0}^{\psi_1}$ if $\varphi$ is a stationary value of the action $\mathcal L(\cdot)$ on $S_{\psi_0}^{\psi_1}$. To get an explicit equation we take a $C^1$ variation of $\varphi$ with the endpoints fixed, i.e. we take a
\[
	\tilde \varphi:[0,1] \times (-\delta,\delta) \to \Ds^s(\R^{2n}),\; (t,\varepsilon) \mapsto \tilde \varphi(t,\varepsilon),
\]
for some $\delta > 0$ s.t. $\tilde \varphi(\cdot,\varepsilon) \in S_{\psi_0}^{\psi_1}$ for all $\varepsilon \in (-\delta,\delta)$ and $\tilde \varphi(\cdot,0)=\varphi$. Doing then the variation gives
\begin{align*}
	\left. \frac{d}{d\varepsilon} \right|_{\varepsilon=0} \mathcal L(\tilde \varphi(\cdot,\varepsilon)) &= \int_0^1 \int_{\R^{2n}} \varphi_t(t) \cdot \left. \partial_\varepsilon\right|_{\varepsilon=0} \tilde \varphi_t(t,\varepsilon) \;dx \;dt\\
	&=-\int_0^1 \int_{\R^{2n}} \varphi_{tt}(t) \cdot \left. \partial_\varepsilon\right|_{\varepsilon=0} \tilde \varphi(t,\varepsilon) \;dx \;dt,
\end{align*}
where we use that $\left. \partial_\varepsilon\right|_{\varepsilon=0} \tilde \varphi(0,\varepsilon)=\left. \partial_\varepsilon\right|_{\varepsilon=0} \tilde \varphi(1,\varepsilon)=0$. Note that $\left. \partial_\varepsilon\right|_{\varepsilon=0} \tilde \varphi(t,\varepsilon) \in T_{\varphi(t)} \Ds^s_\omega(\R^{2n})$. Hence a necessary and sufficient condition that $\mathcal L(\cdot)$ is stationary on $S_{\psi_0}^{\psi_1}$ at $\varphi \in S_{\psi_0}^{\psi_1}$ is for all $0 \leq t \leq 1$
\begin{equation*}
	\int_{\R^{2n}} \varphi_{tt}(t) \cdot w \circ \varphi(t) \;dx=0,\;w \in H_\omega^s(\R^{2n};\R^{2n}).
\end{equation*}
This is the Euler-Lagrange equation for $L(\varphi,\varphi_t)$. Since $\varphi(t)$ is volume preserving we get
\[
	\varphi_{tt}(t) \circ \varphi(t)^{-1} \in H^s_\omega(\R^{2n};\R^{2n})^\perp,\; 0 \leq t \leq 1,
\]
where $\perp$ refers to the $L^2$ orthogonal complement. Thus we get for $u(t):=\varphi_t(t) \circ \varphi(t)^{-1} \in H^s_\omega(\R^{2n};\R^{2n}),0 \leq t \leq 1$,
\[
	\varphi_{tt}(t) \circ \varphi(t)^{-1}=u_t(t) + (u(t) \cdot \nabla) u(t) \in H^s_\omega(\R^{2n};\R^{2n})^\perp,\; 0 \leq t \leq 1.
\]
So $u$ solves \eqref{symplectic_euler}.\\
We have for $v \in H^s_\omega(\R^{2n};\R^{2n})$ and $\varepsilon \in \R$
\[
	L(\varphi \circ \exp(\varepsilon v),\varphi_t \circ \exp(\varepsilon v))=L(\varphi,\varphi_t).
\]
This symmetry of $L(\varphi,\varphi_t)$ implies by Noether's Theorem the following conservation law for a stationary value $\varphi$ of $\mathcal L(\varphi)$
\[
	\int_{\R^{2n}} \varphi_t \cdot \left.\partial_\varepsilon\right|_{\varepsilon=0} \varphi \circ \exp(\varepsilon v) \;dx=\int_{\R^{2n}} \varphi_t \cdot d\varphi \cdot v \;dx \equiv \text{const},\;0 \leq t \leq 1,
\] 
for all $v \in H^s_\omega(\R^{2n};\R^{2n})$. By taking $v=-\nabla_\omega H$ for $H \in C_c^\infty(\R^{2n})$ we get that $\nabla_\omega \cdot ((d\varphi)^\top \cdot \varphi_t)$ is conserved. The expression where the derivative hits the first factor are
\begin{align*}
	\sum_{k=1}^{2n} \partial_2 \partial_1 \varphi_k \partial_t \varphi_k - \partial_1 \partial_2 \varphi_k \partial_t \varphi_k + \ldots + \partial_{2n} \partial_{2n-1} \varphi_k \partial_t \varphi_k - \partial_{2n-1} \partial_{2n} \varphi_k \partial_t \varphi_k=0.
\end{align*}
So we conclude from the formula $\nabla_\omega \cdot z = \operatorname{tr}(dz \cdot \omega^\top)$ for $z \in \R^{2n}$
\[
	\nabla_\omega \cdot ((d\varphi)^\top \cdot \varphi_t)= \operatorname{tr}((d\varphi)^\top \cdot d\varphi_t \cdot \omega^\top).
\]
On the other hand we have for $u=\varphi_t \circ \varphi^{-1}$
\begin{align*}
	(\nabla_\omega \cdot u) \circ \varphi&=\operatorname{tr}(d\varphi_t \cdot (d\varphi)^{-1} \omega^\top)= \operatorname{tr}(d\varphi_t \cdot (d\varphi)^{-1} \cdot \omega^\top \cdot ((d\varphi)^{-1}))^\top \cdot (d\varphi)^\top)\\
	&=\operatorname{tr}(d\varphi_t \cdot \omega^\top \cdot (d\varphi)^\top)=\operatorname{tr}((d\varphi)^\top \cdot d\varphi_t \cdot \omega^\top)=\nabla_\omega \cdot ((d\varphi)^\top \cdot \varphi_t),
\end{align*}
where we use that $(d\varphi)^{-1}$ is a symplectic matrix and the cyclic property of the trace operator $\operatorname{tr}$. Thus we see that Noether's Theorem implies the conservation of $(\nabla_\omega \cdot u) \circ \varphi$.

\section{Nonuniform dependence}\label{section_nonuniform}

The goal of this section is to prove Theorem \ref{th_nonuniform}. Before we do that we have to do some preliminary work. Throughout this section we assume $n \geq 1$ and $s > 2n/2+1$. First of all note that due to the scaling property \eqref{scaling} we have for the time $T > 0$ solution map $\Phi_T$
\begin{equation}\label{solution_map}
	\Phi_T(u_0)=\frac{1}{T} \Phi(Tu_0),\; u_0 \in H^s_\omega(\R^{2n};\R^{2n}),
\end{equation}
where $\Phi$ is the time one solution map, i.e. $\Phi=\left. \Phi_\tau \right|_{\tau=1}$. Thus to prove Theorem \ref{th_nonuniform} it is sufficient to prove it in the special case $T=1$. We will need the following technical lemma concerning the exponential map
\[
	\exp:H^s_\omega(\R^{2n};\R^{2n}) \to \Ds^s(\R^{2n}).
\]

\begin{Lemma}\label{lemma_dexp}
	There is a dense subset $S \subset H^s_\omega(\R^{2n};\R^{2n})$ s.t. we have $S \subset H^{s+1}_\omega(\R^{2n};\R^{2n})$ and
\[
	d_w \exp \neq 0,\;w \in S.
\]
\end{Lemma}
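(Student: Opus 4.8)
The plan is to produce the dense set $S$ by hand rather than abstractly. Since $\exp$ is defined on an open neighborhood $U$ of $0$ in $H^s_\omega$ and $d_0\exp=\id$, the derivative $d_w\exp$ is invertible — hence certainly nonzero — for all $w$ in some open ball $B_R(0)\cap H^s_\omega$. The scaling relation $\varphi(t;w)=\varphi(1;tw)=\exp(tw)$ shows that $d_w\exp$ and $d_{\lambda w}\exp$ are closely related: differentiating $\exp(\lambda w)=\varphi(\lambda;w)$ in $w$ and using the chain rule together with the fact that the flow depends analytically on the initial datum, one sees that $d_{\lambda w}\exp$ is (up to composition with the invertible linear map $R_{\varphi(\lambda;w)^{-1}}$ and a factor) a nonzero operator whenever $d_{w'}\exp\neq 0$ for all $w'$ on the segment from $0$ to $\lambda w$ that stay in $U$. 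The cleanest route: first show $d_w\exp\neq 0$ for every $w$ in a neighborhood of $0$, then propagate this along the flow using the group-like identity $\exp((s+t)w)=?$ — but note $\exp$ is not a homomorphism, so instead I would propagate using the ODE for the Jacobi field $J(t)=d_w\exp(tw)\cdot h$ along the geodesic $\varphi(\cdot;w)$.

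Here is the concrete version. Fix any $w_0\in H^{s+1}_\omega(\R^{2n};\R^{2n})$ (this subspace is dense in $H^s_\omega$, and by Corollary \ref{coro_regularity_preserving} the solution starting at $w_0$ stays in $H^{s+1}_\omega$). I claim $d_{w_0}\exp\neq 0$. Indeed, the variation $J(t):=d_{w_0}\varphi(t;\cdot)\cdot h$ along the solution $\varphi(t)=\varphi(t;w_0)$ satisfies a linear second-order ODE in $t$ obtained by linearizing \eqref{first_order_ode}: $J(0)=0$, $J_t(0)=h$, and $J_{tt}=(d_{(\varphi,\varphi_t)}F)(J,J_t)$, which by the analyticity of $F$ (Lemma \ref{f_regularity}) has unique solutions depending linearly on $h$. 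If $d_{w_0}\exp=d_{w_0}\varphi(1;\cdot)$ were the zero operator, then $J(1)=0$ for every $h$. But $J(t)$ solves a linear ODE that is an isomorphism on $[0,1]$ near $t=0$ — more precisely the map $h\mapsto (J(t),J_t(t))$ is, for each fixed $t$, a linear isomorphism of $H^s_\omega\times H^s_\omega$ onto itself (it is the fundamental solution of the linearized flow, hence invertible). Therefore $J(1)=0$ for all $h$ forces the fundamental solution to be singular at $t=1$, contradicting invertibility. Hence $d_{w_0}\exp\neq 0$.

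So one can simply take $S=H^{s+1}_\omega(\R^{2n};\R^{2n})$, which is dense in $H^s_\omega(\R^{2n};\R^{2n})$ (it is the image of the dense inclusion and contains, e.g., all Schwartz-class symplectic fields, whose density in $H^s_\omega$ follows from the density of $C_c^\infty$ in Sobolev spaces combined with the fact that $P$ and the projection onto $H_\omega$ commute with the mollification used). The density and the regularity inclusion are then immediate; the only real content is the nonvanishing of $d_w\exp$.

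The main obstacle I expect is making the "fundamental solution is invertible on $[0,1]$" argument fully rigorous in the Hilbert-manifold setting: one must check that the linearized equation $J_{tt}=(d F)(J,J_t)$ has a well-defined solution operator $h\mapsto(J(t),J_t(t))$ that is bounded and boundedly invertible on $H^s_\omega\times H^s_\omega$ — invertibility is clear from running the linear ODE backward in time, but one should be careful that the coefficients $d_{(\varphi(t),\varphi_t(t))}F$ are genuinely bounded operators on the relevant spaces, which follows from the analyticity of $F$ established in Lemma \ref{f_regularity}, restricted to the closed submanifold $\Ds^s_\omega(\R^{2n})$ via Theorem \ref{th_submanifold}. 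Once that bookkeeping is in place the contradiction "$J(1)=0$ for all $h$ but the solution operator at $t=1$ is an isomorphism" closes the proof.
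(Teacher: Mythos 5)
There is a genuine gap at the heart of your argument. You deduce from ``$J(1)=0$ for every $h$'' a contradiction with the invertibility of the fundamental solution of the linearized (Jacobi) equation. This inference is false. The fundamental solution acts on the phase space of pairs $(J,J_t)$, and its invertibility only guarantees that $(J(1),J_t(1))\neq(0,0)$ whenever $h\neq 0$; it is perfectly compatible with $J(1)=0$ for \emph{all} $h$, the missing information being absorbed into $J_t(1)$. The scalar model $J_{tt}=-\pi^2 J$, $J(0)=0$, $J_t(0)=h$ gives $J(1)=0$ and $J_t(1)=-h$ for every $h$, with fundamental matrix $-\mathrm{Id}$ at $t=1$ --- invertible, yet the map $h\mapsto J(1)$ is identically zero. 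This is exactly the conjugate-point phenomenon: $d_w\exp$ can very well be singular (and on diffeomorphism groups conjugate points do occur), so no argument that would prove injectivity of $d_w\exp$ can be correct. There is also a type mismatch in the statement ``$h\mapsto(J(t),J_t(t))$ is an isomorphism of $H^s_\omega\times H^s_\omega$ onto itself'': the domain of that map is $H^s_\omega$, not the product, and what is invertible is the map $(J(0),J_t(0))\mapsto(J(t),J_t(t))$.

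Your conclusion that one may take $S=H^{s+1}_\omega(\R^{2n};\R^{2n})$ happens to be true, but for a different reason: by homogeneity, $d_w\exp(w)=\left.\tfrac{d}{d\varepsilon}\right|_{\varepsilon=0}\varphi(1+\varepsilon;w)=\varphi_t(1;w)=u(1;w)\circ\varphi(1;w)$, and conservation of the $L^2$ norm gives $\|u(1;w)\|_{L^2}=\|w\|_{L^2}\neq 0$ for $w\neq 0$, so the derivative is nonzero in the radial direction $h=w$ (a Gauss-lemma-type argument); the case $w=0$ is $d_0\exp=\mathrm{id}$. If you repair your proof this way, it is in fact more concrete than the paper's, which argues softly: if the good set were not dense, then by continuity of $w\mapsto d_w\exp$ and density of $H^{s+1}_\omega$ the derivative would vanish on a nonempty open set, forcing $\exp$ to be constant there and hence, by analyticity, everywhere --- contradicting $d_0\exp=\mathrm{id}$. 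As written, however, your central step does not hold.
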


\begin{proof} 
	First note that that $H^{s+1}_\omega(\R^{2n};\R^{2n}) \subset H^s_\omega(\R^{2n};\R^{2n})$ is dense. To see this we denote by $\chi_N(D)$ the Fourier multiplier operator with symbol the characteristic function of the closed ball in $\R^{2n}$ with center $0$ and radius $N > 0$. Now for $w \in H^s_\omega(\R^{2n};\R^{2n})$ we have $\chi_N(D)w \to w$ in $H^s$ and $\chi_N(D)w \in H^{s+1}_\omega(\R^{2n};\R^{2n})$. The reason is that $\chi_N(D)$ commmutes with the operator $P$ from \eqref{operator_p} which implies $P(\chi_N(D)w)=\chi_N(D)P(w)=0$.\\
	Suppose now that there is no dense subset $S$ with the properties in the statement of the lemma. Then due to the density of $H^{s+1}_\omega(\R^{2n};\R^{2n}) \subset H^s_\omega(\R^{2n};\R^{2n})$ there is a nonempty open set $V \subset H^s_\omega(\R^{2n};\R^{2n})$ s.t. $d_w \exp = 0,w \in V$. This implies that $\exp$ is constant on $V$. Since $\exp$ is analytic we get that $\exp$ is constant over its whole domain. This is clearly not possible since we haver e.g. $d_0 \exp =\text{id}_{H^s_\omega(\R^{2n};\R^{2n})}$.
\end{proof}

We have the following local Lipschitz estimate.

\begin{Lemma}\label{lemma_lipschitz}
	Let $\varphi \in \Ds^s(\R^{2n})$. Then there is a neighborhood $W \subset \Ds^s(\R^{2n})$ of $\varphi$ and $C > 0$ s.t.
\[
	\|f \circ \varphi_1-f \circ \varphi_2\|_{H^{s-1}} \leq C \|f\|_{H^s} \|\varphi_1-\varphi_2\|_{H^{s-1}},
\]
	for all $f \in H^s(\R^{2n})$ and $\varphi_1,\varphi_2 \in W$.
\end{Lemma}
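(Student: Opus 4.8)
The plan is to reduce the statement to a small convex neighbourhood of $\varphi$ and then interpolate along the straight segment joining $\varphi_2$ to $\varphi_1$, combining the fundamental theorem of calculus with the algebra property of $H^{s-1}$ and the quantitative boundedness of composition.

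\emph{Choice of $W$.} Since $\Ds^s(\R^{2n}) - \id$ is open in $H^s(\R^{2n};\R^{2n})$, I would take $W \subset \Ds^s(\R^{2n})$ to be an $H^s$-open ball centred at $\varphi$. Being a ball, $W$ is convex, so for any $\varphi_1,\varphi_2 \in W$ the path
\[
	\varphi_\tau := (1-\tau)\varphi_2 + \tau\varphi_1,\qquad \tau \in [0,1],
\]
stays inside $W \subset \Ds^s(\R^{2n})$. Note also that $W$ is a bounded subset of $\Ds^s(\R^{2n})$, a fact that will be used below.

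\emph{The interpolation identity and the estimate.} Because $s > 2n/2+1$, the Sobolev imbedding gives $f \in C^1_0(\R^{2n})$, so pointwise on $\R^{2n}$
\[
	f\circ\varphi_1 - f\circ\varphi_2 = \int_0^1 \frac{d}{d\tau}\big(f\circ\varphi_\tau\big)\,d\tau = \int_0^1 \big((\nabla f)\circ\varphi_\tau\big)\cdot(\varphi_1-\varphi_2)\,d\tau .
\]
I would then argue that this is an identity in $H^{s-1}(\R^{2n})$: the curve $\tau\mapsto\varphi_\tau$ is affine, hence continuous, into $\Ds^s(\R^{2n})$; composition $H^{s-1}(\R^{2n})\times\Ds^s(\R^{2n})\to H^{s-1}(\R^{2n})$ is continuous; and, since $s-1 > 2n/2$, $H^{s-1}(\R^{2n})$ is a Banach algebra, so multiplication by the fixed components of $\varphi_1-\varphi_2 \in H^s \subset H^{s-1}$ is continuous. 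Hence $\tau\mapsto \big((\nabla f)\circ\varphi_\tau\big)\cdot(\varphi_1-\varphi_2)$ is a continuous $H^{s-1}$-valued curve and the integral is a Bochner integral in $H^{s-1}$; equivalently, one may quote from \cite{composition} that $(f,\psi)\mapsto f\circ\psi$ is $C^1$ from $H^s\times\Ds^s(\R^{2n})$ into $H^{s-1}$ and apply the mean value formula. Taking $H^{s-1}$ norms and using the algebra property,
\begin{align*}
	\|f\circ\varphi_1 - f\circ\varphi_2\|_{H^{s-1}} &\leq \int_0^1 \big\|\big((\nabla f)\circ\varphi_\tau\big)\cdot(\varphi_1-\varphi_2)\big\|_{H^{s-1}}\,d\tau\\
	&\leq C\Big(\sup_{\psi\in W}\big\|(\nabla f)\circ\psi\big\|_{H^{s-1}}\Big)\|\varphi_1-\varphi_2\|_{H^{s-1}} .
\end{align*}
So it remains to bound $\sup_{\psi\in W}\|(\nabla f)\circ\psi\|_{H^{s-1}}$ by $C\|f\|_{H^s}$ with $C$ depending only on $W$.

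\emph{The main obstacle.} This last supremum is the technical heart of the lemma. Continuity of composition alone only tells us that for each fixed $\psi$ the operator $g\mapsto g\circ\psi$ is bounded on $H^{s-1}$, and that for each fixed $g$ the map $\psi\mapsto g\circ\psi$ is continuous, which does not immediately yield a bound uniform over the infinite-dimensional, non-compact set $W$. I would resolve this by invoking the quantitative form of the composition estimate from \cite{composition}: there is a nondecreasing continuous function $\Lambda$ with
\[
	\|g\circ\psi\|_{H^{s-1}} \leq \Lambda\big(\|\psi-\id\|_{H^s}\big)\,\|g\|_{H^{s-1}},\qquad \psi\in\Ds^s(\R^{2n}),\ g\in H^{s-1}(\R^{2n}).
\]
Since $\rho := \sup_{\psi\in W}\|\psi-\id\|_{H^s} < \infty$, this gives $\sup_{\psi\in W}\|(\nabla f)\circ\psi\|_{H^{s-1}} \leq \Lambda(\rho)\,\|\nabla f\|_{H^{s-1}} \leq \Lambda(\rho)\,\|f\|_{H^s}$, and combining with the previous display yields the claim with a constant $C$ depending only on $W$. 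Everything else is routine.
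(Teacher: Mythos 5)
Your proof is correct and follows essentially the same route as the paper: the fundamental theorem of calculus along the straight segment joining $\varphi_2$ to $\varphi_1$, the boundedness of multiplication $H^{s-1}\times H^{s-1}\to H^{s-1}$, and a uniform bound for $g\mapsto g\circ\psi$ on $H^{s-1}$ over a small neighbourhood $W$. The one place where you diverge is the justification of that uniform bound: the paper gets it ``softly'' from joint continuity of $(g,\psi)\mapsto g\circ\psi$ together with linearity in $g$ (continuity at $(0,\varphi)$ plus homogeneity immediately gives $\|g\circ\psi\|_{H^{s-1}}\leq C\|g\|_{H^{s-1}}$ for all $\psi$ in some ball), whereas you quote a quantitative estimate $\|g\circ\psi\|_{H^{s-1}}\leq \Lambda\bigl(\|\psi-\id\|_{H^s}\bigr)\|g\|_{H^{s-1}}$; as stated this cannot be right, since already the $L^2$ part of the norm requires a lower bound on $\det(d\psi)$ (think of $\psi$ with $\|\psi-\id\|_{H^s}$ bounded but Jacobian determinant degenerating somewhere), so $\Lambda$ must also depend on $\inf_x\det(d_x\psi)$. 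This is harmless for the lemma --- on a sufficiently small $H^s$-ball around $\varphi$ the Sobolev imbedding gives a uniform positive lower bound on the Jacobian determinant --- but either shrink $W$ and say so explicitly, or use the paper's continuity-plus-linearity argument, which avoids the issue altogether.
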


\begin{proof}
	Let $\delta > 0$ be s.t. the ball $B_\delta(\varphi)$ with center $\varphi$ and radius $\delta$ is contained in $\Ds^s(\R^{2n})$ and s.t. we have for some $C > 0$
	\[
		\|g \circ \psi\|_{H^{s-1}} \leq C \|g\|_{H^{s-1}}
	\]
for all $\psi \in B_\delta(\varphi)$ and $g \in H^{s-1}(\R^{2n})$. This follows from the continuity of the composition together with linearity in $g$. By the Fundamental Theorem of Calculus we have for $f \in H^s(\R^{2n})$, $\varphi_1,\varphi_2 \in B_\delta(\varphi)$ pointwise in $x \in \R^{2n}$
\[
	f(\varphi_2(x))-f(\varphi_1(x))=\int_0^1 \nabla f(\varphi_1(x)+s(\varphi_2(x)-\varphi_1(x)) \cdot (\varphi_2(x)-\varphi_1(x)) \;ds.
\]
	By the continuity of the composition the curve
\[
	[0,1] \to H^{s-1}(\R^{2n};\R^{2n}),\;s \mapsto \nabla f(\varphi_1(x)+s(\varphi_2(x)-\varphi_1(x)),
\]
is continuous. Thus the above 
	\[
		f \circ \varphi_2-f \circ \varphi_1=\int_0^1 \nabla f \circ (\varphi_1 + s (\varphi_2-\varphi_1)) \cdot (\varphi_2-\varphi_1) \;ds
	\]
is an identity in $H^{s-1}$. Thus we conclude
	\[
		\|f \circ \varphi_2-f \circ \varphi_1\|_{H^{s-1}} \leq C \|\nabla f\|_{H^{s-1}} \|\varphi_2-\varphi_1\|_{H^{s-1}}.
	\]
This finishes the proof.
\end{proof}

As a corollary we have

\begin{Coro}\label{coro_lipschitz}
	Let $\varphi \in \Ds^s(\R^{2n})$. Then there is a neighborhood $W \subset \Ds^s(\R^{2n})$ of $\varphi$ and $C > 0$ s.t.
\[
	\|\varphi_2^{-1}-\varphi_1^{-1}\|_{H^{s-1}} \leq C \|\varphi_2-\varphi_1\|_{H^{s-1}}
\]
for all $\varphi_1,\varphi_2 \in W$.
\end{Coro}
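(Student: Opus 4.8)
The plan is to factor the inversion through a map close to the identity and then invoke Lemma \ref{lemma_lipschitz} \emph{at the point} $\text{id}$. Write $g := \varphi_1 \circ \varphi_2^{-1}$, so that $\varphi_1^{-1}\circ g = \varphi_2^{-1}$, and observe
\[
	g - \text{id} = \varphi_1\circ\varphi_2^{-1} - \varphi_2\circ\varphi_2^{-1} = (\varphi_1 - \varphi_2)\circ\varphi_2^{-1}.
\]
Setting $h := \varphi_1^{-1} - \text{id} \in H^s(\R^{2n};\R^{2n})$ and using $\varphi_1^{-1}\circ g = g + h\circ g$ together with $\varphi_1^{-1} = \text{id} + h$, one obtains the algebraic identity
\[
	\varphi_2^{-1} - \varphi_1^{-1} = (g - \text{id}) + \bigl(h\circ g - h\circ\text{id}\bigr) = (\varphi_1 - \varphi_2)\circ\varphi_2^{-1} + \bigl(h\circ g - h\circ\text{id}\bigr),
\]
which is the starting point of the estimate.

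Next I would fix the neighborhoods compatibly. Let $W_0 \subset \Ds^s(\R^{2n})$ and $C_0 > 0$ be those furnished by Lemma \ref{lemma_lipschitz} applied at $\text{id} \in \Ds^s(\R^{2n})$. Since $\Ds^s(\R^{2n})$ is a topological group under composition and inversion is continuous, the map $(\varphi_1,\varphi_2)\mapsto\varphi_1\circ\varphi_2^{-1}$ is continuous $\Ds^s(\R^{2n})\times\Ds^s(\R^{2n})\to\Ds^s(\R^{2n})$ and takes the value $\text{id}$ at $(\varphi,\varphi)$; hence there is a neighborhood $W$ of $\varphi$ with $g = \varphi_1\circ\varphi_2^{-1} \in W_0$ whenever $\varphi_1,\varphi_2\in W$. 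Shrinking $W$ if necessary, the continuity of inversion together with the linearity of $f\mapsto f\circ\psi$ in $f$ (as already used in the proof of Lemma \ref{lemma_lipschitz}) gives a constant $C_1$ with $\|f\circ\varphi_2^{-1}\|_{H^{s-1}}\le C_1\|f\|_{H^{s-1}}$ for all $f\in H^{s-1}(\R^{2n})$ and $\varphi_2\in W$, and a constant $M$ with $\|\varphi_1^{-1}-\text{id}\|_{H^s}\le M$ for all $\varphi_1\in W$.

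With this in hand the estimate is immediate. The first term satisfies $\|(\varphi_1-\varphi_2)\circ\varphi_2^{-1}\|_{H^{s-1}}\le C_1\|\varphi_1-\varphi_2\|_{H^{s-1}}$. For the second term I would apply Lemma \ref{lemma_lipschitz} componentwise to $h = \varphi_1^{-1}-\text{id}$, with the roles of the two diffeomorphisms played by $g\in W_0$ and $\text{id}\in W_0$, and then use $\|h\|_{H^s}\le M$ and $\|g-\text{id}\|_{H^{s-1}} = \|(\varphi_1-\varphi_2)\circ\varphi_2^{-1}\|_{H^{s-1}} \le C_1\|\varphi_1-\varphi_2\|_{H^{s-1}}$:
\[
	\|h\circ g - h\circ\text{id}\|_{H^{s-1}} \le C_0 \|h\|_{H^s}\,\|g - \text{id}\|_{H^{s-1}} \le C_0 M C_1 \|\varphi_1-\varphi_2\|_{H^{s-1}}.
\]
Adding the two bounds yields the corollary with $C = (1+C_0 M)C_1$.

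I expect the only delicate point to be the compatible choice of neighborhoods — specifically, ensuring that $g = \varphi_1\circ\varphi_2^{-1}$ really lies in the neighborhood $W_0$ of $\text{id}$ from Lemma \ref{lemma_lipschitz} in the $\Ds^s$-topology (not merely that $g-\text{id}$ is small in $H^{s-1}$), which is exactly what the continuity of composition and inversion on $\Ds^s(\R^{2n})$ supplies. This factoring through a near-identity map is what lets us avoid an absorption argument: a more naive expansion based directly at $\varphi^{-1}$ would produce a term of the form $C\|\varphi_1-\text{id}\|_{H^s}\,\|\varphi_2^{-1}-\varphi_1^{-1}\|_{H^{s-1}}$, and the coefficient $C\|\varphi_1-\text{id}\|_{H^s}$ cannot in general be made $<1$ by shrinking $W$ when $\varphi$ is far from $\text{id}$. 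Note also that the one-derivative loss is essential here: all differences are measured in $H^{s-1}$ while the data $\varphi_i-\text{id}$ live in $H^s$, which is precisely what makes Lemma \ref{lemma_lipschitz} and the composition estimates applicable.
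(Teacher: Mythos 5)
Your proof is correct and follows essentially the same route as the paper: the paper uses the mirror factorization $\varphi_2^{-1}-\varphi_1^{-1}=\bigl(\varphi_2^{-1}\circ\varphi_1-\varphi_2^{-1}\circ\varphi_2\bigr)\circ\varphi_1^{-1}$, applies Lemma \ref{lemma_lipschitz} at the point $\varphi$ to $f=\varphi_2^{-1}-\text{id}$, and then uses boundedness of right composition with $\varphi_1^{-1}$, which is the same combination of ingredients as your identity anchored at $\text{id}$. Your variant is equally valid; the only difference is where Lemma \ref{lemma_lipschitz} is localized, and both require the same compatibility of neighborhoods supplied by the topological group structure of $\Ds^s(\R^{2n})$.
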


\begin{proof}
	We write 
	\begin{align*}
		\varphi_2^{-1}-\varphi_1^{-1}=\left(\varphi_2^{-1} \circ \varphi_1 - \text{id}\right) \circ \varphi_1^{-1}=\left(\varphi_2^{-1} \circ \varphi_1 - \varphi_2^{-1} \circ \varphi_2\right) \circ \varphi_1^{-1}
	\end{align*}
	The continuity of $\Ds^s(\R^{2n}) \to \Ds^s(\R^{2n}),\psi \mapsto \psi^{-1}$ and Lemma \ref{lemma_lipschitz} ensures the existence of a neighborhood $W \subset \Ds^s(\R^{2n})$ of $\varphi$ s.t.
\begin{align*}
	\|\varphi_2^{-1}-\varphi_1^{-1}\|_{H^{s-1}} &= \|\left(\varphi_2^{-1} \circ \varphi_1 - \varphi_2^{-1} \circ \varphi_2\right) \circ \varphi_1^{-1}\|_{H^{s-1}} \\
	&\leq C \|\varphi_2^{-1} \circ \varphi_1 - \varphi_2^{-1} \circ \varphi_2\|_{H^{s-1}} \leq C \|\varphi_2-\varphi_1\|_{H^{s-1}}
\end{align*}
	for $\varphi_1,\varphi_2 \in W$. Note that $\varphi_2^{-1} \in \text{id}+H^s(\R^{2n};\R^{2n})$. This finishes the proof.
\end{proof}

For integer an integer $\sigma \geq 0$ one easily gets the estimate
\[
	\|f\|_{H^\sigma} + \|g\|_{H^\sigma} \leq C \|f+g\|_{H^\sigma}
\]
for $f,g \in H^\sigma(\R^{2n})$ with disjoint support and for some $C > 0$ independent of $f,g$. One just has to use the equivalent norm $\|f\|_{H^\sigma}=\left(\sum_{k=0}^\sigma \|D^k f\|_{L^2}^2\right)^{1/2}$. Here $D^k$ denotes the collection of all derivatives of order $k$. For noninteger $\sigma$ the Sobolev norm $\|\cdot\|_{H^\sigma}$ is defined in a nonlocal way. In this case we have the following replacement.

\begin{Lemma}\label{nonlocal_norm}
	Let $\sigma \geq 0$ and $m \geq 1$. Then there is $C > 0$ s.t. we have the following: Let $P_1,P_2 \in \R^m$ be with distance $0 < d:=|P_2-P_1| \leq 4$. Then 
\[
	\|f\|_{H^\sigma} + \|g\|_{H^\sigma} \leq C\|f+g\|_{H^\sigma},
\]
	for all $f,g \in H^\sigma(\R^m)$ with $\operatorname{supp}f \subset B_{d/4}(P_1)$ and $\operatorname{supp}g \subset B_{d/4}(P_2)$. Here $\operatorname{supp}$ denotes the support of a function and $B_r(P) \subset \R^m$ is the open ball of radius $r > 0$ around $P \in \R^m$.
\end{Lemma}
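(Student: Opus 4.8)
For integer $\sigma$ the inequality is the one recorded in the remark preceding the lemma (where disjointness of supports already suffices), so assume $\sigma\notin\Z$. Write $\|u\|_{\dot H^\sigma}:=\||\xi|^\sigma\hat u\|_{L^2}$. The plan is first to reduce to a statement about this homogeneous seminorm: since $\operatorname{supp}f\cap\operatorname{supp}g=\emptyset$ one has the exact identity $\|f+g\|_{L^2}^2=\|f\|_{L^2}^2+\|g\|_{L^2}^2$, and combined with the two-sided equivalence $\|u\|_{H^\sigma}^2\sim\|u\|_{L^2}^2+\|u\|_{\dot H^\sigma}^2$ (constants depending only on $m,\sigma$) it suffices to prove
\[
\|f\|_{\dot H^\sigma}+\|g\|_{\dot H^\sigma}\le C_\ast\,\|f+g\|_{\dot H^\sigma}
\]
with $C_\ast$ independent of $f,g,d,P_1,P_2$: this forces $\|f+g\|_{\dot H^\sigma}^2\ge C_\ast^{-2}\big(\|f\|_{\dot H^\sigma}^2+\|g\|_{\dot H^\sigma}^2\big)$, and adding the $L^2$ identity and re-expanding the equivalent norms yields the lemma. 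The reason for working with $\|\cdot\|_{\dot H^\sigma}$ is its exact scaling $\|u(d\,\cdot+P)\|_{\dot H^\sigma}=d^{\sigma-m/2}\|u\|_{\dot H^\sigma}$: substituting $x\mapsto dx+P_1$ (and rotating) turns the displayed inequality, with the same constant, into the single fixed-geometry claim that there is $C_\ast$ with $\|F\|_{\dot H^\sigma}+\|G\|_{\dot H^\sigma}\le C_\ast\|F+G\|_{\dot H^\sigma}$ whenever $\operatorname{supp}F\subset\overline{B_{1/4}(0)}$ and $\operatorname{supp}G\subset\overline{B_{1/4}(e_1)}$.

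I would prove this fixed-geometry claim by contradiction and compactness. Suppose there were $F_k,G_k$ supported in $\overline{B_{1/4}(0)}$ and $\overline{B_{1/4}(e_1)}$ with $\|F_k\|_{\dot H^\sigma}^2+\|G_k\|_{\dot H^\sigma}^2=1$ but $\|F_k+G_k\|_{\dot H^\sigma}\to0$. By the fractional Poincar\'e inequality on a fixed ball ($\|u\|_{L^2}\lesssim\|u\|_{\dot H^\sigma}$ for $u$ supported in $\overline{B_{1/4}(0)}$, itself provable by the same compactness scheme), $\{F_k\},\{G_k\}$ are bounded in $H^\sigma$ with supports in fixed compact sets, so after passing to a subsequence $F_k\to F$ and $G_k\to G$ in $L^2$ (Rellich), with $\operatorname{supp}F\subset\overline{B_{1/4}(0)}$ and $\operatorname{supp}G\subset\overline{B_{1/4}(e_1)}$. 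From $F_k+G_k\to F+G$ in $L^2\subset\mathcal S'$ and $\|F_k+G_k\|_{\dot H^\sigma}\to0$ one gets that the Fourier transform of $F+G$ is supported at the origin; being also compactly supported, $F+G=0$, and disjointness of supports then forces $F=G=0$. On the other hand, using the finite-difference characterization $\|u\|_{\dot H^\sigma}^2\sim\int_{\R^m}|h|^{-m-2\sigma}\|\Delta_h^{N}u\|_{L^2}^2\,dh$ with $N=\lfloor\sigma\rfloor+1$ and its polarization, the cross term $\langle F_k,G_k\rangle_{\dot H^\sigma}$ receives contributions only from those $h$ for which $\Delta_h^NF_k$ and $\Delta_h^NG_k$ overlap, which forces $\operatorname{dist}(\operatorname{supp}F_k,\operatorname{supp}G_k)\le N|h|$, i.e.\ $|h|$ stays in a compact set bounded away from $0$ (on which $|h|^{-m-2\sigma}$ is bounded); thus $\langle F_k,G_k\rangle_{\dot H^\sigma}$ is a finite linear combination of expressions $\int_{K}|h|^{-m-2\sigma}\langle F_k,\,G_k(\cdot+c h)\rangle_{L^2}\,dh$ ($c\in\mathbb Z$, $K$ compact, $0\notin K$), which pass to the limit by dominated convergence once $F_k\to F$, $G_k\to G$ in $L^2$. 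Hence $\langle F_k,G_k\rangle_{\dot H^\sigma}\to\langle F,G\rangle_{\dot H^\sigma}=0$, so $1=\|F_k\|_{\dot H^\sigma}^2+\|G_k\|_{\dot H^\sigma}^2=\|F_k+G_k\|_{\dot H^\sigma}^2-2\langle F_k,G_k\rangle_{\dot H^\sigma}\to0$, a contradiction.

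The hard part is exactly the uniformity in $d$, which is the whole content of the lemma. A hands-on estimate of the cross term — expanding $\|f+g\|_{\dot H^\sigma}^2$, bounding the off-diagonal part of the $(-\Delta)^\sigma$-kernel by $C\,d^{-(m+2\sigma)}$ on $\operatorname{supp}f\times\operatorname{supp}g$, and absorbing the powers of $d$ via Cauchy--Schwarz and fractional Poincar\'e — only gives $|\langle f,g\rangle_{\dot H^\sigma}|\le C_0\|f\|_{\dot H^\sigma}\|g\|_{\dot H^\sigma}$ with a universal $C_0$ that one cannot force to be $<1$, so this by itself does not close the argument; a compactness input (equivalently, a strict uniform near-orthogonality of separated bumps, which genuinely uses compactness of the rescaled supports) seems unavoidable, as in the scheme above. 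Everything else — the integer case, the equivalence $\|u\|_{H^\sigma}^2\sim\|u\|_{L^2}^2+\|u\|_{\dot H^\sigma}^2$, the scaling of $\|\cdot\|_{\dot H^\sigma}$, the fractional Poincar\'e inequality, the Rellich theorem, and the modulus-of-smoothness characterization of $\dot H^\sigma$ with off-diagonal boundedness of its kernel — is standard.
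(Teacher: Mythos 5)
Your argument is essentially correct, but for the crucial ``fixed geometry'' step it takes a genuinely different route from the paper. You share with the paper the two key reductions --- the exact scaling $\|f(\lambda\,\cdot)\|_{\dot H^\sigma}=\lambda^{\sigma-m/2}\|f\|_{\dot H^\sigma}$ to normalize the separation to unit scale, and the comparison of $\|\cdot\|_{H^\sigma}$ with the homogeneous seminorm (you do this globally via $\|u\|_{H^\sigma}^2\sim\|u\|_{L^2}^2+\|u\|_{\dot H^\sigma}^2$ together with exact $L^2$ orthogonality of disjointly supported functions, the paper via the Bahouri--Chemin--Danchin equivalence $\|\cdot\|_{\dot H^\sigma}\sim\|\cdot\|_{H^\sigma}$ on functions supported in a fixed compact set, which is where the hypothesis $d\le 4$ enters). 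Where you diverge is the normalized claim itself: you run a compactness/contradiction scheme (fractional Poincar\'e, Rellich, and a finite-difference polarization of $\langle F_k,G_k\rangle_{\dot H^\sigma}$ to pass the cross term to the limit), whereas the paper disposes of it in one line with fixed cutoffs $\kappa_1,\kappa_2\in C_c^\infty$ that are $\equiv 1$ on the (rescaled) support of $f^\lambda$ resp.\ $g^\lambda$ and vanish on the other: then $f^\lambda=\kappa_1(f^\lambda+g^\lambda)$, $g^\lambda=\kappa_2(f^\lambda+g^\lambda)$, and boundedness of multiplication by a fixed $C_c^\infty$ function on $H^\sigma$ gives $\|f^\lambda\|_{H^\sigma}+\|g^\lambda\|_{H^\sigma}\le C\|f^\lambda+g^\lambda\|_{H^\sigma}$ directly. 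So your closing assertion that ``a compactness input seems unavoidable'' is not right --- the cutoff trick is exactly the constructive device that closes the argument without any near-orthogonality of the cross term. Your route does buy something: it is insensitive to the restriction $d\le 4$ (your homogeneous estimate is fully scale invariant) and avoids invoking multiplier bounds on fractional Sobolev spaces, at the price of being non-quantitative and requiring several auxiliary facts (Rellich for fractional $\sigma$, the exact finite-difference representation of $\langle\cdot,\cdot\rangle_{\dot H^\sigma}$) that you would need to justify carefully in a full write-up.
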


\begin{proof}
Since $\|\cdot\|_{H^\sigma}$ is translation and rotation invariant we can assume $P_1=(-d/2,0,\ldots,0),P_2=(d/2,0,\ldots,0)$. We denote by $K=\overline{B_3(0)}$ the closed ball around $0 \in \R^m$ with radius $5$. From \cite{bahouri} we know that the homogeneous Sobolev norm
	\[
		\|f\|_{\dot H^\sigma}=\left(\int_{\R^m} |\xi|^{2\sigma} |\hat f(\xi)|^2 \;d\xi \right)^{1/2},
	\]
	is equivalent to $\|\cdot\|_{H^s}$ for functions with support in $K$. If we denote by $f^\lambda(x)=f(\lambda x)$ the scaling of $f$ by $\lambda > 0$ we have $\|f^\lambda\|_{\dot H^\sigma}=\lambda^{\sigma-m/2} \|f\|_{\dot H^\sigma}$. If we take $\lambda=4/d$ then $\operatorname{supp} f^\lambda \subset B_1(Q_1)$ resp. $\operatorname{supp} g^\lambda \subset B_1(Q_2)$ where $Q_1=(-2,0,\ldots,0)$ resp. $Q_2=(2,0,\ldots,0)$. Now take $\kappa_1,\kappa_2 \in C_c^\infty(\R^m)$ s.t. $\kappa_1= 1$ on $B_1(Q_1)$ and $0$ on $B_{3/2}(Q_1)^c$ resp. $\kappa_2=1$ on $B_1(Q_2)$ and $0$ on $B_{3/2}(Q_2)^c$. We then have
	\begin{align*}
		\|f\|_{H^\sigma} + \|g\|_{H^\sigma} &\leq C (\|f\|_{\dot H^\sigma} + \|g\|_{\dot H^\sigma}) = C \lambda^{m/2-\sigma}  (\|f^\lambda\|_{\dot H^\sigma} + \|g^\lambda\|_{\dot H^\sigma})\\
		&\leq C \lambda^{m/2-\sigma}  (\|f^\lambda\|_{H^\sigma}+\|g^\lambda\|_{H^\sigma}) \\
		&= C \lambda^{m/2-\sigma}  (\|\kappa_1 (f^\lambda+g^\lambda)\|_{H^\sigma}+\|\kappa_2 (f^\lambda+g^\lambda)\|_{H^\sigma})\\
		&\leq C \lambda^{m/2-\sigma} \|f^\lambda+g^\lambda\|_{H^\sigma}
		\leq C \|f+g\|_{\dot H^\sigma} \leq C \|f+g\|_{H^\sigma},
	\end{align*}
	where we used that $\|\cdot\|_{\dot H^\sigma}$ and $\|\cdot\|_{H^\sigma}$ are equivalent for functions with support in $K$ and where we used the scaling for the homogeneous norm. This finishes the proof.
\end{proof}

Now we prove the special case $T=1$ of Theorem \ref{th_nonuniform}.

\begin{Prop}\label{prop_nonuniform}
	Let $n \geq 1$ and $s > 2n/2+1$. We denote by $\Phi$ the time $T=1$ solution map of \eqref{symplectic_euler}, i.e.
\[
	\Phi:H^s_\omega(\R^{2n};\R^{2n}) \to H^s_\omega(\R^{2n};\R^{2n}),\; u_0 \mapsto u(1;u_0).
\]
Then $\Phi$ is nowhere locally uniformly continuous.
\end{Prop}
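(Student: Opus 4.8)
The plan is to reduce to a dense set of base points and then, around each, exhibit two sequences of initial data that coalesce in $H^s$ while their images under $\Phi$ stay a fixed positive distance apart. Since the restriction of a uniformly continuous map is uniformly continuous and, by Lemma \ref{lemma_dexp}, the set $S$ is dense in $H^s_\omega(\R^{2n};\R^{2n})$, it is enough to show: for every $\bar u_0\in S$ and every sufficiently small $r>0$ there are sequences $(v_k),(w_k)\subset B_r(\bar u_0)\cap H^s_\omega(\R^{2n};\R^{2n})$ with $\|v_k-w_k\|_{H^s}\to 0$ but $\liminf_k\|\Phi(v_k)-\Phi(w_k)\|_{H^s}>0$. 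Two structural facts are used. First, $\Phi(u_0)=\varphi_t(1;u_0)\circ\exp(u_0)^{-1}$ with $\varphi_t(1;u_0)=d_{u_0}\exp(u_0)$, and $u_0\mapsto\exp(u_0)$ is real analytic, hence locally Lipschitz on $H^s$; so the only non-Lipschitz ingredient of $\Phi$ is the inversion/right composition. Second, by the transport law \eqref{frozen}, $\nabla_\omega\cdot\Phi(u_0)=(\nabla_\omega\cdot u_0)\circ\exp(u_0)^{-1}$, and by Lemma \ref{symplectic_divergence} one has $\|v\|_{H^s}\asymp\|\nabla_\omega\cdot v\|_{H^{s-1}}$ for $v\in H^s_\omega(\R^{2n};\R^{2n})$; this lets us trade $H^s$ estimates on $\Phi(v_k)-\Phi(w_k)\in H^s_\omega$ for $H^{s-1}$ estimates on $(\nabla_\omega\cdot v_k)\circ\Psi_k^{(1)}-(\nabla_\omega\cdot w_k)\circ\Psi_k^{(2)}$, where $\Psi_k^{(1)}=\exp(v_k)^{-1}$ and $\Psi_k^{(2)}=\exp(w_k)^{-1}$, and in $H^{s-1}$ the Lipschitz estimates of Lemma \ref{lemma_lipschitz} and Corollary \ref{coro_lipschitz} are available.

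Fix $\bar u_0\in S$ and $r>0$ small. Since $\bar u_0\in H^{s+1}_\omega(\R^{2n};\R^{2n})$ we have $\bar\zeta_0:=\nabla_\omega\cdot\bar u_0\in H^s(\R^{2n})$; since $d_{\bar u_0}\exp\neq 0$ there is a direction $h$ with $d_{\bar u_0}\exp(h)$ nonvanishing near a fixed point $p\in\R^{2n}$, and by continuity of $d\exp$ the same holds at base points in a fixed neighborhood of $\bar u_0$. I would take $v_k=\bar u_0+h_k^{(1)}$ and $w_k=\bar u_0+h_k^{(2)}$, where $h_k^{(i)}$ are symplectic vector fields — realized as $\nabla_\omega$ of scalar wave packets, hence lying in $H_\omega$ — supported in a fixed ball about $p$, oscillating at frequency $\lambda_k\to\infty$ with amplitude $\asymp\lambda_k^{-s}$ so that $\|h_k^{(i)}\|_{H^s}\asymp r/2$ and $\|h_k^{(i)}\|_{C^1}\to 0$, and with $h_k^{(1)}-h_k^{(2)}$ a low-frequency bump of size $\rho_k$ in the direction $h$. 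The parameters are tuned so that $\rho_k\to 0$, $\lambda_k\rho_k\to\infty$, $\lambda_k\rho_k^2\to 0$ and $\sin^2(\lambda_k\rho_k/2)\geq\tfrac12$ (for instance $\rho_k$ comparable to $\lambda_k^{-2/3}$, with a slight adjustment of $\rho_k$ to enforce the last condition). Then $v_k,w_k\in B_r(\bar u_0)$ and $\|v_k-w_k\|_{H^s}\asymp\rho_k\to 0$.

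Set $\zeta_k^{(1)}=\nabla_\omega\cdot v_k=\bar\zeta_0+g_k^{(1)}$ and $\zeta_k^{(2)}=\nabla_\omega\cdot w_k=\bar\zeta_0+g_k^{(2)}$, with $g_k^{(i)}$ a frequency-$\lambda_k$ packet, $\|g_k^{(i)}\|_{H^{s-1}}\asymp r/2$. By the reduction,
\[
	\|\Phi(v_k)-\Phi(w_k)\|_{H^s}\asymp\big\|\zeta_k^{(1)}\circ\Psi_k^{(1)}-\zeta_k^{(2)}\circ\Psi_k^{(2)}\big\|_{H^{s-1}},
\]
and I split the right-hand expression as
\[
	(\zeta_k^{(1)}-\zeta_k^{(2)})\circ\Psi_k^{(1)}\;+\;\big(\bar\zeta_0\circ\Psi_k^{(1)}-\bar\zeta_0\circ\Psi_k^{(2)}\big)\;+\;\big(g_k^{(2)}\circ\Psi_k^{(1)}-g_k^{(2)}\circ\Psi_k^{(2)}\big).
\]
The first term $\to 0$ in $H^{s-1}$ because $\|\zeta_k^{(1)}-\zeta_k^{(2)}\|_{H^{s-1}}\lesssim\|v_k-w_k\|_{H^s}\to 0$ and composition with $\Psi_k^{(1)}$ is bounded (for $r$ small, $\Psi_k^{(1)}$ stays in a fixed neighborhood of $\bar\varphi^{-1}$, $\bar\varphi:=\exp\bar u_0$). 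The second term $\to 0$ in $H^{s-1}$ by Lemma \ref{lemma_lipschitz} — this is where $\bar\zeta_0\in H^s$ is used — together with $\|\Psi_k^{(1)}-\Psi_k^{(2)}\|_{H^{s-1}}\lesssim\|v_k-w_k\|_{H^s}\to 0$, which follows from Corollary \ref{coro_lipschitz} and the local Lipschitzness of $\exp$. The third term survives, and here Lemma \ref{lemma_lipschitz} is not available because $g_k^{(2)}$ lies only in $H^{s-1}$; instead one analyzes the oscillation directly. To leading order $\Psi_k^{(1)}-\Psi_k^{(2)}$ is, near $q_k:=\exp(v_k)(p)$, a physical displacement of size $\asymp\rho_k$ in the direction fixed by $h$, the $O(\rho_k^2)$ correction and the variation of this displacement over $\operatorname{supp}g_k^{(2)}$ being negligible in phase because $\lambda_k\rho_k^2\to 0$. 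Hence $g_k^{(2)}\circ\Psi_k^{(1)}$ and $g_k^{(2)}\circ\Psi_k^{(2)}$ are frequency-$\lambda_k$ packets supported near $q_k$ that differ by a phase $\asymp\lambda_k\rho_k$, and $\sin^2(\lambda_k\rho_k/2)\geq\tfrac12$ gives, by a Fourier computation, $\|g_k^{(2)}\circ\Psi_k^{(1)}-g_k^{(2)}\circ\Psi_k^{(2)}\|_{H^{s-1}}\geq c\,\|g_k^{(2)}\|_{H^{s-1}}\geq cr/2$. Since this term is essentially supported in a fixed ball about $q_k$ while the other two tend to $0$, Lemma \ref{nonlocal_norm} (separating that ball from its complement) gives $\|\zeta_k^{(1)}\circ\Psi_k^{(1)}-\zeta_k^{(2)}\circ\Psi_k^{(2)}\|_{H^{s-1}}\geq c'>0$ for all large $k$, hence $\|\Phi(v_k)-\Phi(w_k)\|_{H^s}\geq c''>0$, contradicting uniform continuity of $\Phi$ on $B_r(\bar u_0)$.

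The main obstacle is the leading-order description of $\Psi_k^{(1)}-\Psi_k^{(2)}$: one must show that the small low-frequency difference $v_k-w_k$ produces, at time one, a genuine displacement of the flow map of size $\asymp\rho_k$ concentrated near $q_k$ and aligned with the wave vector of $g_k^{(2)}$. This requires controlling the derivative of $\exp$ at the \emph{rough} base point $\xi_k=\bar u_0+h_k+O(\rho_k)$ (which is at $H^s$-distance $\asymp r/2$, not $o(1)$, from $\bar u_0$): that $d_{\xi_k}\exp$ does not annihilate the direction $h$ near $p$ — this is where Lemma \ref{lemma_dexp} and the continuity of $d\exp$ at base points near $\bar u_0$ enter — and that it does not manufacture high-frequency content from the low-frequency input of size $\rho_k$ in a way that would spoil the phase bookkeeping; exploiting the fixed localization of $h_k$ should be the key here. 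Everything else is routine: the error terms are handled by the $H^{s-1}$ Lipschitz estimates of Lemma \ref{lemma_lipschitz} and Corollary \ref{coro_lipschitz} and the analyticity of $u_0\mapsto(\exp(u_0),\varphi_t(1;u_0))$, and the final lower bound is assembled using the disjoint-support estimate of Lemma \ref{nonlocal_norm}. This is the symplectic analogue on $\R^{2n}$ of the argument in \cite{euler}.
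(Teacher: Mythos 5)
Your overall reduction --- pass to the dense set $S$ from Lemma \ref{lemma_dexp}, use the transport law \eqref{frozen} to convert the problem into lower bounds on $\|(\nabla_\omega\cdot v_k)\circ\Psi_k^{(1)}-(\nabla_\omega\cdot w_k)\circ\Psi_k^{(2)}\|_{H^{s-1}}$, kill the smooth part $\bar\zeta_0$ with Lemma \ref{lemma_lipschitz} and Corollary \ref{coro_lipschitz}, and finish with Lemma \ref{nonlocal_norm} --- matches the paper. But your mechanism for the surviving term is genuinely different (high-frequency wave packets of frequency $\lambda_k$ on a \emph{fixed} support, detected through a phase shift $\asymp\lambda_k\rho_k$), and this is where there is a real gap, one you partly flag yourself. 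The displacement $\Psi_k^{(1)}-\Psi_k^{(2)}\approx\rho_k D_k$ with $D_k$ essentially $d_{\xi_k}\exp(h)$ (suitably composed) is a nonconstant $C^1$ field; over the \emph{fixed} support of $g_k^{(2)}$ it varies by $O(\rho_k)$, not $O(\rho_k^2)$, so the phase $\lambda_k\,\ell\cdot\delta(y)$ varies by $O(\lambda_k\rho_k)\to\infty$ across the support. Your condition $\lambda_k\rho_k^2\to 0$ controls only the quadratic Taylor remainder and the variation over a single wavelength; it does not make the phase shift approximately constant, so the bound ``$\sin^2(\lambda_k\rho_k/2)\geq\tfrac12$ implies $\|g_k^{(2)}\circ\Psi_k^{(1)}-g_k^{(2)}\circ\Psi_k^{(2)}\|_{H^{s-1}}\geq c\|g_k^{(2)}\|_{H^{s-1}}$'' does not follow as stated. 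Rescuing it (via $L^2$ almost-orthogonality of the two composed packets plus a $\lambda_k^{s-1}$ frequency-localization upgrade to $H^{s-1}$) is possible in principle but is not a ``Fourier computation'': composition with a diffeomorphism is not a Fourier multiplier, and the needed non-stationary phase/transversality hypotheses on $\ell\cdot D_k$ are exactly the unverified part.

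The paper avoids all phase bookkeeping by a different choice of perturbation: it takes $v_k$ of fixed $H^s$-size $R/2$ but supported in balls $B_{r_k}(x_\ast)$ with $r_k\asymp m_\ast/(8kC_2)$ \emph{shrinking}, and perturbs by $\tfrac1k w_\ast$ where $|(d_{u_\ast}\exp(w_\ast))(x_\ast)|=m_\ast>0$. A second-order Taylor expansion of $\exp$ \emph{around the smooth base point} $u_\ast$ (with the second-derivative bounds \eqref{c3a}--\eqref{c3b} holding uniformly on $B_{R_3}(u_\ast)$, and $R_\ast$ chosen small so the remainders are $\leq m_\ast/(16k)$) shows $|\tilde\varphi_k(x_\ast)-\varphi_k(x_\ast)|\geq m_\ast/(2k)$, i.e.\ the displacement \emph{at the single point} $x_\ast$ exceeds four times the support radius. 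The two pushed-forward symplectic divergences then have literally disjoint supports and Lemma \ref{nonlocal_norm} gives the lower bound at once; this also disposes of your ``rough base point'' worry, since only $d^2\exp$ on a ball around $u_\ast$ is needed, never $d\exp$ at $\xi_k$. If you shrink the support of your packets fast enough to tame the phase variation, you are led back to essentially this construction --- at which point the high-frequency oscillation is no longer doing any work. I would therefore either adopt the shrinking-support/disjoint-support argument, or supply the missing almost-orthogonality analysis in full.
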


\begin{proof}
	The proof is based on the conservation law \eqref{frozen}. For $t=1$ it reads as
	\begin{equation}\label{conserved}
		\nabla_\omega u(1;u_0)=(\nabla_\omega u_0) \circ \varphi(1;u_0)^{-1},\; u_0 \in H^s_\omega(\R^{2n};\R^{2n}).
	\end{equation}
	Note that $\varphi(1;u_0)=\exp(u_0)$. We fix $u_\ast \in S$ where $S$ is the dense subset from Lemma \ref{lemma_dexp}. With that we fix $w_\ast \in H^s_\omega(\R^{2n};\R^{2n})$, $x_\ast \in \R^{2n}$ and $m_\ast > 0$ s.t.
	\begin{equation}\label{m_ast}
		\|w_\ast\|_{H^s}=1 \text{ and }	|\left(d_{u_\ast} \exp(w_\ast)\right)(x_\ast)| = m_\ast  > 0.
	\end{equation}
We will construct $B_{R_\ast}(u_\ast) \subset H^s_\omega(\R^{2n};\R^{2n})$ for some $R_\ast > 0$ s.t. $\left. \Phi \right|_{B_R(u_\ast)},0 < R \leq R_\ast$ is not uniformly continuous. But we need some preparation. First we choose $R_1 > 0$ s.t.
	\begin{equation}\label{c1}
		\frac{1}{C_1} \|f \circ \varphi^{-1}\|_{H^{s-1}} \leq  \|f\|_{H^{s-1}} \leq C_1 \|f \circ \varphi^{-1}\|_{H^{s-1}}
	\end{equation}
	for all $f \in H^s(\R^{2n};\R^{2n})$ and $\varphi \in \exp(B_{R_1}(u_\ast))$ for some $C_1 > 0$. This is ensured by the continuity of the composition resp. inversion and the linearity in $f$. Next we choose $0 < R_2 \leq R_1$ s.t.
	\begin{equation}\label{c2}
	|\varphi(x)-\varphi(y)|\leq C_2 |x-y|,\;x,y \in \R^{2n},
	\end{equation}
	for all $\varphi \in \exp(B_{R_2}(u_\ast))$ for some $C_2 > 0$. This is clearly possible due to the Sobolev Imbedding Theorem $H^s(\R^{2n};\R^{2n}) \hookrightarrow C^1_0(\R^{2n};\R^{2n})$ and the continuity of $\exp$. Consider the Taylor expansion for $\exp:H^s_\omega(\R^{2n};\R^{2n}) \to \Ds^s(\R^{2n})$
\[
	\exp(p+h)=\exp(p)+d_{p}\exp(h)+\int_0^1 (1-t) d^2_{p + s h}\exp(h,h) \;ds
\]
	for $p,h \in H^s_\omega(\R^{2n};\R^{2n})$. Motivated by this we choose $0 < R_3 \leq R_2$ s.t.
	\begin{equation}\label{c3a}
		\|d_{p_1}^2\exp(h_1,h_2)\|_{H^s} \leq C_3 \|h_1\|_{H^s} \|h_2\|_{H^s}
	\end{equation}
resp.
	\begin{equation}\label{c3b}
	\|d_{p_1}^2\exp(h_1,h_2)-d_{p_2}^2\exp(h_1,h_2)\|_{H^s} \leq C_3 \|p_1-p_2\|_{H^s} \|h_1\|_{H^s} \|h_2\|_{H^s},
	\end{equation}
	for $p_1,p_2 \in B_{R_3}(u_\ast)$, $h_1,h_2 \in H^s_\omega(\R^{2n};\R^{2n})$ for some $C_3 > 0$. This is possible due to the smoothness of $\exp$. Again using the smoothness of $\exp$ we choose $0 < R_4 \leq R_3$ s.t. 
	\begin{equation}\label{c4}
		\|\exp(p_1)-\exp(p_2)\|_{H^s} \leq C_4 \|p_1-p_2\|_{H^s}
	\end{equation}
for $p_1,p_2 \in B_{R_4}(u_\ast)$ and for some $C_4 > 0$. Finally we choose $0 < R_\ast \leq R_4$ s.t.
	\begin{equation}\label{r_ast}
		\max\{C_3C_5 R_\ast,C_3C_5 R_\ast^2/4\} \leq m_\ast/16.
	\end{equation}
	For definiteness we fix $C_5 > 0$ s.t. we have for all $x \in \R^{2n}$
	\begin{equation}\label{c5}
		|w(x)| \leq C_5 \|w\|_{H^s},\;w \in H^s(\R^{2n};\R^{2n}).
	\end{equation}
	Now we take $0 < R \leq R_\ast$. Our goal is to show that $\left. \Phi \right|_{B_R(u_\ast)}$ is not uniformly continuous. We define the sequence of radii $(r_k)_{k \geq 1} \subset (0,\infty)$
	\begin{equation}\label{radii}
		r_k=\frac{m_\ast }{8kC_2},\; k \geq 1,
	\end{equation}
	and choose a sequence $(v_k)_{k \geq 1} \subset H^s_\omega(\R^{2n};\R^{2n})$ s.t. $\operatorname{supp}v_k \subset B_{r_k}(x_\ast)$ and $\|v_k\|_{H^s}=R/2$ for $k \geq 1$. One can construct this sequence by taking functions $H \in C_c^\infty(\R^{2n})$ and by considering $\frac{R}{2} \frac{\nabla_\omega H}{\|\nabla_\omega H\|_{H^s}}  \in H^s_\omega(\R^{2n};\R^{2n})$. With all this preparation we define the pair of sequences $(u_{0,k})_{k \geq 1},(\tilde u_{0,k})_{k \geq 1}$ of initial values
	\[
		u_{0,k}=u_\ast+v_k \text{ resp. } \tilde u_{0,k}=u_{0,k}+\frac{1}{k} w_\ast=u_\ast+v_k+\frac{1}{k}w_\ast,\; k \geq 1.
	\]
	Note that for some $N \geq 1$ we have $(u_{0,k})_{k \geq N},(\tilde u_{0,k})_{k \geq N} \subset B_R(u_\ast)$. Moreover
	\[
		\lim_{k \to \infty} \|\tilde u_{0,k}- u_{0,k}\|_{H^s}=\lim_{k \to \infty}  \|\frac{1}{k} w_\ast \|_{H^s}=0.
	\]
	To show nonuniform continuity on $B_R(u_\ast)$ it is sufficient to show 
	\[
		\liminf_{k \to \infty} \|\Phi(\tilde u_{0,k})-\Phi(u_{0,k})\|_{H^s} > 0.
	\]
To establish this it is clearly sufficient to show
	\[
		\liminf_{k \to \infty} \|\nabla_\omega \cdot \Phi(\tilde u_{0,k})-\nabla_\omega \cdot \Phi(u_{0,k})\|_{H^{s-1}} > 0.
	\]
In order to make the notation easier we introduce
	\[
	\tilde \varphi_k=\exp(\tilde u_{0,k}),\;	\varphi_k=\exp(u_{0,k}),\; k \geq N.
	\]
	Using \eqref{frozen} we have
\[
	\nabla_\omega \cdot \Phi(\tilde u_{0,k})=(\nabla_\omega \cdot \tilde u_{0,k}) \circ \tilde \varphi_k^{-1},\;\nabla_\omega \cdot \Phi(u_{0,k})=(\nabla_\omega \cdot u_{0,k}) \circ \varphi_k^{-1},\; k \geq 1.
\]
Thus we can write for $k \geq 1$
\begin{align*}
	&\nabla_\omega \cdot \Phi(\tilde u_{0,k})-\nabla_\omega \cdot \Phi(u_{0,k})=\left((\nabla_\omega \cdot u_\ast) \circ \tilde \varphi_k^{-1}-(\nabla_\omega \cdot u_\ast) \circ \varphi_k^{-1}\right)\\
	&+\left((\nabla_\omega \cdot v_k) \circ \tilde \varphi_k^{-1}-(\nabla_\omega \cdot v_k) \circ \varphi_k^{-1}\right)+\frac{1}{k} (\nabla_\omega \cdot w_\ast) \circ \tilde \varphi_k^{-1}.
\end{align*}
From \eqref{c1} we get
\[
	\|\frac{1}{k} (\nabla_\omega \cdot w_\ast) \circ \tilde \varphi_k^{-1}\|_{H^{s-1}} \leq C_1 \frac{1}{k} \|\nabla_\omega \cdot w_\ast\|_{H^{s-1}} \to 0
\]
as $k \to \infty$. From Lemma \ref{lemma_lipschitz}, Corollary \ref{coro_lipschitz} and \eqref{c4} we get
\begin{align*}
	&\|(\nabla_\omega \cdot u_\ast) \circ \tilde \varphi_k^{-1}-(\nabla_\omega \cdot u_\ast) \circ \varphi_k^{-1}\|_{H^{s-1}} \leq C \|\nabla_\omega \cdot u_\ast\|_{H^s} \|\tilde \varphi_k-\varphi_k\|_{H^s}\\
	&\leq C C_4 \|\nabla_\omega \cdot u_\ast\|_{H^s} \|\tilde u_{0,k}-u_{0,k}\|_{H^s} \to 0
\end{align*}
as $k \to \infty$ since $u_\ast \in H^{s+1}$. So using the reverse triangle inequality we have the reduction
\[
	\liminf_{k \to \infty} \|\nabla_\omega \cdot \Phi(\tilde u_{0,k})-\nabla_\omega \cdot \Phi(u_{0,k})\|_{H^{s-1}} = 
	\liminf_{k \to \infty} \|(\nabla_\omega \cdot v_k) \circ \tilde \varphi_k^{-1}-(\nabla_\omega \cdot v_k) \circ \varphi_k^{-1}\|_{H^{s-1}}.
\]
For the supports of the latter expressions we can ensure by \eqref{c2} and \eqref{radii}
\[
	\operatorname{supp}(\nabla_\omega \cdot v_k) \circ \tilde \varphi_k^{-1}  \subset B_{m_\ast/(8k)}(\tilde \varphi_k(x_\ast)) \text{ resp. } \operatorname{supp}(\nabla_\omega \cdot v_k) \circ \varphi_k^{-1}  \subset B_{m_\ast/(8k)}(\varphi_k(x_\ast)).
\]
In order to separate the supports we need to estimate $|\tilde \varphi_k(x_\ast)-\varphi_k(x_\ast)|$. To do so we consider
\begin{align*}
	&\tilde \varphi_k=\exp(u_\ast+v_k+\frac{1}{k}w_\ast)=\\
	&\exp(u_\ast)+d_{u_\ast} \exp(v_k+\frac{1}{k}w_\ast)+\int_0^1 (1-s) d_{u_\ast + s v_k + s\frac{1}{k}w_\ast}^2 (v_k+\frac{1}{k}w_\ast,v_k+\frac{1}{k} w_\ast) \;ds
\end{align*}
and
\begin{align*}
&\varphi_k=\exp(u_\ast+v_k)=\exp(u_\ast)+d_{u_\ast} \exp(v_k)+\int_0^1 (1-s) d_{u_\ast + s v_k}^2 (v_k,v_k) \;ds.
\end{align*}
Thus we get
\[
	|\tilde \varphi_k(x_\ast)-\varphi_k(x_\ast)|=\left|\frac{1}{k} (d_{u_\ast}\exp(w_\ast))(x_\ast)+I_1(x_\ast)+I_2(x_\ast)+I_3(x_\ast)\right|,
\]
where
\[
	I_1=\int_0^1 (1-s) \left(d_{u_\ast + s v_k + s\frac{1}{k}w_\ast}^2 (v_k,v_k)- d_{u_\ast + s v_k}^2 (v_k,v_k)\right)\;ds
\]
and
\[
	I_2=2 \int_0^1 (1-s) d_{u_\ast + s v_k + s\frac{1}{k}w_\ast}^2 (v_k,\frac{1}{k}w_\ast)\;ds
\]
and
\[
	I_3=2 \int_0^1 (1-s) d_{u_\ast + s v_k + s\frac{1}{k}w_\ast}^2 (\frac{1}{k}w_\ast,\frac{1}{k}w_\ast)\;ds.
\]
Using \eqref{c3a} we conclude 
\[
	\|I_1\|_{H^s} \leq C_3 \frac{1}{k} \|w_\ast\|_{H^s} \|v_k\|_{H^s}^2 = C_3 \frac{1}{k} R^2/4
\]
and using \eqref{c3b} we get
\[
	\|I_2\|_{H^s} \leq C_3 \frac{2}{k} \|w_\ast\|_{H^s} \|v_k\|_{H^s}=C_3 \frac{1}{k} R,\;
	\|I_3\|_{H^s} \leq C_3 \frac{1}{k^2}.
\]
By the condition for $R_\ast$ in \eqref{r_ast} we get by \eqref{c5} 
\[
	|I_1(x_\ast)| \leq m_\ast/(16k),\;|I_2(x_\ast)| \leq m_\ast/(16k).
\]
For $k \geq N$ large enough we get $|I_3(x_\ast)| \leq m_\ast/(16k)$. Hence 
\[
	|\tilde \varphi_k(x_\ast)-\varphi_k(x_\ast)|=\frac{1}{k}|(d_{u_\ast}\exp(w_\ast)(x_\ast)|-|I_1(x_\ast)+I_2(x_\ast)+I_3(x_\ast)| \geq m_\ast/(2k)
\]
for $k$ large enough. The ratio of the radius of the supports $m_\ast/(8k)$ to the distance $|\tilde \varphi_k(x_\ast)-\varphi_k(x_\ast)| \geq m_\ast/(2k)$ allows us to use Lemma \ref{nonlocal_norm} to conclude with \eqref{c1}
\begin{align*}\liminf_{k \to \infty} \|(\nabla_\omega \cdot v_k) \circ \tilde \varphi_k^{-1}-(\nabla_\omega \cdot v_k) \circ \varphi_k^{-1}\|_{H^{s-1}} \geq \liminf_{k \to \infty} \frac{2}{CC_1} \|\nabla_\omega \cdot v_k\|_{H^{s-1}}.
\end{align*}
Note that $\|f\|_{L^2}+\|\nabla f\|_{H^{s-1}}$ is equivalent to $\|f\|_{H^s}$ for $f \in H^s$. In other words there is $K_1 > 0$ s.t.
\[
	\|f\|_{H^s} \leq K_1 (\|f\|_{L^2}+\|\nabla f\|_{H^{s-1}}),\;f \in H^s(\R^{2n};\R^{2n}).
\]
From Lemma \ref{symplectic_divergence} we get that
\[
	\|\nabla v_k\|_{H^{s-1}} \leq K_2 \|\nabla_\omega \cdot v_k\|_{H^{s-1}}
\]
for some $K_2 > 0$. Since the support of $v_k$ shrinks to a point as $k \to \infty$ and $\|v_k\|_{H^s}=R/2$ is bounded we get 
\[
	\lim_{k \to \infty} \|v_k\|_{L^2}=0.
\]
Thus 
\[
	\liminf_{k \to \infty} \|\nabla_\omega \cdot v_k\|_{H^{s-1}} \geq \frac{1}{K_1 K_2}  \liminf_{k \to \infty} \|v_k\|_{H^s} = \frac{R}{2 K_1 K_2}.  
\]
Putting all together gives
\[
	\liminf_{k \to \infty} \|\nabla_\omega \cdot \Phi(\tilde u_{0,k})-\nabla_\omega \cdot \Phi(u_{0,k})\|_{H^s} \geq \frac{R}{CC_1K_1K_2} > 0.
\]
So
\[
\liminf_{k \to \infty} \|\Phi(\tilde u_{0,k})-\Phi(u_{0,k})\|_{H^s} > 0
\]
whereas $\lim_{k \to \infty} \|\tilde u_{0,k}-u_{0,k}\|_{H^s}=0$. This shows that $\left. \Phi \right|_{B_R(u_\ast)}$ is not uniformly continuous. Since $0 < R \leq R_\ast$ and $u_\ast \in S$ is arbitrary this finishes the proof.
\end{proof}

Now we can prove Theorem \ref{th_nonuniform}.

\begin{proof}[Proof of Theorem \ref{th_nonuniform}]
	The proof follows from Proposition \ref{prop_nonuniform} and \eqref{solution_map}.
\end{proof}

\bibliographystyle{plain}

\end{document}